\documentclass[9pt]{article}
\usepackage{mathrsfs}
\usepackage{amsthm}
\usepackage{amssymb}
\usepackage{amsmath}
\usepackage{graphicx}
\usepackage{color}
\usepackage{amsfonts}
\usepackage{float}
\usepackage{cite}
\usepackage [latin1]{inputenc}
\usepackage[text={140mm,210mm},left=35mm,vmarginratio=1:1]{geometry}
\newtheorem{theorem}{Theorem}[section]

\newtheorem{lemma}[theorem]{Lemma}
\newtheorem{proposition}[theorem]{Proposition}

\numberwithin{equation}{section}
\normalsize

\begin{document}
\title{\textbf{CLT for quadratic additive functionals of three-dimensional and four-dimensional voter models}}

\author{Xiaofeng Xue \thanks{\textbf{E-mail}: xfxue@bjtu.edu.cn \textbf{Address}: School of Mathematics and Statistics, Beijing Jiaotong University, Beijing 100044, China.}\\ Beijing Jiaotong University}

\date{}
\maketitle

\noindent {\bf Abstract:} In this paper, we prove central limit theorems for the quadratic additive functionals of the three-dimensional and four-dimensional voter models. Our main results show that, under proper time-space scalings, the centered quadratic additive functional of the four-dimensional voter model converges weakly to a Brownian motion and that in the three-dimensional case converges weakly to a Gaussian process without independent increment. The proof of the main result in the four-dimensional case extends the martingale decomposition method. In the three-dimensional case, we introduce a quadratic fluctuation limit theorem which shows that the quadratic fluctuation density field of the voter model converges weakly to a generalized Ornstein-Uhlenbeck process. Then our main result in the three-dimensional case follows from an argument where the quadratic additive functional is approximated by an integral of a test function with respect to the quadratic fluctuation density field. The duality relationship between the voter model and the coalescing random walk is repeatedly utilized in proofs of the main results. The proof strategies introduced in this paper are applicable in deducing CLTs for general additive functionals of three-dimensional and four dimensional voter models. We only deal with those with the quadratic forms to make our exposition easy to catch.

\quad

\noindent {\bf Keywords:} voter model, quadratic additive functional, quadratic fluctuation limit, coalescing random walk.

%\noindent {\bf MSC2020:} 60F05, 60K35.

\section{Introduction}\label{section one}
In this paper, we are concerned with the voter models on three-dimensional and four-dimensional lattices. We prove central limit theorems for the so-called quadratic additive functionals of the processes. For later use, we first introduce some notations and recall the definition of the voter model. For integer $d\geq 1$, we denote by $\mathbb{Z}^d$ the $d$-dimensional lattice. For any $x\in \mathbb{Z}^d$, we denote by $\|x\|_1$ the $l_1$-norm of $x$. For any $x, y\in \mathbb{Z}^d$, we write $x\sim y$ when and only when $\|x-y\|_1=1$, i.e., $x\sim y$ means that $y$ is one of $2d$ neighbors of $x$. We denote by $\textbf{0}$ the origin of $\mathbb{Z}^d$. Let $\{0, 1\}^{\mathbb{Z}^d}$ be the set of all configurations $\eta: \mathbb{Z}^d\rightarrow \{0, 1\}$. For any different $x, y\in \mathbb{Z}^d$ and $\eta\in \{0, 1\}^{\mathbb{Z}^d}$, we define $\eta^{x, y}\in \{0, 1\}^{\mathbb{Z}^d}$ as
\[
\eta^{x, y}(z)=
\begin{cases}
\eta(z) & \text{~if~}z\neq x,\\
\eta(y) & \text{~if~}z=x.
\end{cases}
\]
The voter model $\{\eta_t\}_{t\geq 0}$ on $\mathbb{Z}^d$ is a continuous-time Markov process with state space $\{0, 1\}^{\mathbb{Z}^d}$ and generator $\mathcal{L}$ given by
\begin{equation}\label{equ 1.1 generator of the voter model}
\mathcal{L}f(\eta)=\sum_{x\in \mathbb{Z}^d}\sum_{y\sim x}\left(f(\eta^{x, y})-f(\eta)\right)
\end{equation}
for any $\eta\in \{0, 1\}^{\mathbb{Z}^d}$ and local $f: \{0, 1\}^{\mathbb{Z}^d}\rightarrow \mathbb{R}$.

The voter model describes the variations of respective sets of supporters of two opposite opinions of a topic. In detail, for any $x\in \mathbb{Z}^d$, $\eta(x)=1$ (resp. $\eta(x)=0$) means that $x$ holds the opinion `$1$' (resp. `$0$'). For any $x\in \mathbb{Z}^d$ and any neighbor $y$ of $x$, $x$ adopts $y$'s opinion at rate $1$. For a detailed survey of basic properties of the voter model, see Chapter 5 of \cite{Lig1985} and Part {\rm \uppercase\expandafter{\romannumeral2}} of \cite{Lig1999}.

We denote by $\mathbb{P}$ the probability measure of $\{\eta_t\}_{t\geq 0}$ and denote by $\mathbb{E}$ the expectation with respect to $\mathbb{P}$. When $\eta_0$ is distributed with a probability measure $\lambda$ on $\{0, 1\}^{\mathbb{Z}^d}$, we write $\mathbb{P}$ as $\mathbb{P}_\lambda$ to emphasize the dependence of $\mathbb{P}$ on $\lambda$. For given $\eta\in \{0, 1\}^{\mathbb{Z}^d}$, when $\lambda$ is the Dirac measure concentrated on $\eta$, we further write $\mathbb{P}_\lambda$ as $\mathbb{P}_\eta$.

For any local $h: \{0, 1\}^{\mathbb{Z}^d}\rightarrow \mathbb{R}$, the random variable $\int_0^t h(\eta_s)ds$ is called an additive functional of the process. Hence, for any $x, z\in \mathbb{Z}^d$ and $t\geq 0$, it is natural to call the stochastic variable
\[
\xi_t^{x, z}:=\int_0^t\eta_s(x)\eta_s(z)ds
\]
a quadratic additive functional of the voter model on $x, z$. When $x=z$, the quadratic additive functional reduces to the so-called occupation time $\int_0^t \eta_s(x)ds$. The main results of this paper give the central limit theorems of $\{\xi_t^{x, z}\}_{t\geq 0}$ under proper time-space scaling in the cases $d=3$ and $d=4$. For mathematical details, see Section \ref{section two}.

The investigations of limit theorems for additive functionals of voter models date back to 1980s. Reference \cite{Cox1983} gives central limit theorems for the occupation times of the voter models on lattices starting from a Bernoulli product measure, which perform dimension-dependent phase transitions. Reference \cite{Xue2026} extends the main results in \cite{Cox1983} to invariance principles in cases $d\geq 3$. Reference \cite{Xue2026b} gives invariance principles for general additive functionals of a family of voter models, which include the voter model on $\mathbb{Z}^d$ for $d\geq 5$ as a special case. Reference \cite{Bramson1988} gives large deviation principles for the occupation times of the voter models on $\mathbb{Z}^d$ for $d\geq 3$. Reference \cite{Maillard2009} extends the main result in \cite{Bramson1988} to the case $d=2$.

\section{Main results}\label{section two}
In this section, we give our main results. We first introduce some notations for later use. We denote by $\{X_t\}_{t\geq 0}$ the continuous-time simple random walk on $\mathbb{Z}^d$ with generator $\mathcal{G}$ given by
\begin{equation}\label{equ 2.1 generator of Random Walk}
\mathcal{G}h(x)=\sum_{y\sim x}(h(y)-h(x))
\end{equation}
for any $h: \mathbb{Z}^d\rightarrow \mathbb{R}$ and $x\in \mathbb{Z}^d$. We denote by $\hat{\mathbb{P}}$ the probability measure of $\{X_t\}_{t\geq 0}$ and denote by $\hat{\mathbb{E}}$ the expectation with respect to $\hat{\mathbb{P}}$. For any $x, z\in \mathbb{Z}^d$ and $t\geq 0$, we denote by $p_t(x, z)$ the transition probability $\hat{\mathbb{P}}\left(X_t=z\big|X_0=x\right)$. We write $p_t(x, z)$ as $p_{t, d}(x, z)$ when we need to emphasize the dependence of $p_t(x, z)$ on the dimension $d$ of the lattice. For any $x\in \mathbb{Z}^d$, we denote by $\Phi(x)$ the term
\[
\hat{\mathbb{P}}\left(X_t=x\text{~for some~}t\geq 0\big|X_0=\textbf{0}\right).
\]
For $x\sim \textbf{0}$, we denote by $\gamma$ the term $\Phi(x)$. According to the spatial homogeneity of the simple random walk, conditioned on $X_0=\textbf{0}$, $\gamma$ is probability that $X_\cdot$ will return to $\textbf{0}$ after leaving $\textbf{0}$. We write $\gamma$ as $\gamma_d$ when we need to emphasize the dependence of $\gamma$ on the dimension $d$ of the lattice.

Now we introduce the initial distribution of the voter model discussed in this paper. For given $p\in (0, 1)$, we denote by $\mu_p$ the Bernoulli product measure on $\{0, 1\}^{\mathbb{Z}^d}$ with density $p$. i.e., $\{\eta(x)\}_{x\in \mathbb{Z}^d}$ are independent under $\mu_p$ and
\[
\mu_p\left(\eta:~\eta(x)=1\right)=p
\]
for each $x\in \mathbb{Z}^d$. The following proposition, which is proved in \cite{Clifford1973} and \cite{Holley1975} independently, shows that the voter model $\{\eta_t\}_{t\geq 0}$ starting from $\mu_p$ converges weakly to an invariant distribution as $t\rightarrow+\infty$.

\begin{proposition}\label{Proposition 2.1 invariant distribution}(Clifford and Sudbury, \cite{Clifford1973}; Holley and Liggett, \cite{Holley1975})
Let $p\in (0, 1)$ and $\eta_0$ be distributed with $\mu_p$. There exists a probability measure $\nu_p$ on $\{0, 1\}^{\mathbb{Z}^d}$ such that $\eta_t$ converges weakly to $\nu_p$ as $t\rightarrow+\infty$. Furthermore,
\[
\nu_p\left(\eta:~\eta(x)=1\right)=p
\]
and
\[
\nu_p\left(\eta:~\eta(x)=\eta(z)=1\right)=p\Phi(x-z)+p^2\left(1-\Phi(x-z)\right)
\]
for any $x, z\in \mathbb{Z}^d$.
\end{proposition}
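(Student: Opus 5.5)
The plan is to use the duality between the voter model and coalescing random walks. Run the voter model by a graphical representation: for each ordered pair $(x,y)$ with $y\sim x$, attach a rate-one Poisson clock, and at each ring set $\eta(x):=\eta(y)$. For fixed $t$ and a finite set $A\subseteq\mathbb{Z}^d$, trace ancestral lines backward from each $a\in A$ at time $t$. This gives a system $\{\hat X^{a}_s\}_{0\le s\le t,\,a\in A}$ of coalescing continuous-time random walks, each moving with generator $\mathcal{G}$ of (\ref{equ 2.1 generator of Random Walk}). The walks are independent until they meet and move together afterwards. In this representation $\eta_t(a)=\eta_0(\hat X^a_t)$. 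Let $\eta_0\sim\mu_p$ be independent of the clocks. For any finite $A$ we then get
\[
\mathbb{P}_{\mu_p}\left(\eta_t(a)=1\ \forall a\in A\right)=\mathbb{E}\left[p^{|\{\hat X^a_t:\,a\in A\}|}\right],
\]
since the values of $\eta_0$ at the distinct positions are independent Bernoulli($p$) variables.

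The number $N_t:=|\{\hat X^a_t: a\in A\}|$ is nonincreasing in $t$ and takes values in $\{1,\dots,|A|\}$. Hence it converges almost surely to some $N_\infty$, and by bounded convergence the right-hand side converges to $\mathbb{E}[p^{N_\infty}]$. The family of cylinder probabilities $\mathbb{P}(\eta_t\equiv 1 \text{ on } A)$ determines a measure on $\{0,1\}^{\mathbb{Z}^d}$ by inclusion–exclusion. Each probability of a cylinder event is a finite signed combination of these quantities. So convergence for every finite $A$ gives convergence of all finite-dimensional distributions. Since the state space is compact, this is weak convergence of $\eta_t$ to a limit $\nu_p$, characterized by $\nu_p(\eta\equiv1 \text{ on } A)=\mathbb{E}[p^{N_\infty(A)}]$. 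Note that the law of the coalescing system on $[0,t]$ does not depend on $t$, so $N_\infty(A)$ is well defined.

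The moment formulas come from two special cases:
\begin{itemize}
\item For $A=\{x\}$ we have $N\equiv1$, so $\nu_p(\eta(x)=1)=p$.
\item For $A=\{x,z\}$ with $x\neq z$ we have $N_\infty=1$ exactly when the two walks ever meet, and $N_\infty=2$ otherwise.
\end{itemize}
Before meeting, the difference $\hat X^x_s-\hat X^z_s$ is a continuous-time random walk jumping to each neighbor at rate $2$, which is the simple random walk run at double speed. The event that the two walks meet is the event that this difference ever hits $\textbf{0}$ starting from $x-z$. Time change does not affect hitting events, and by symmetry and spatial homogeneity this probability is $\Phi(z-x)=\Phi(x-z)$. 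Therefore
\[
\nu_p(\eta(x)=\eta(z)=1)=p\,\Phi(x-z)+p^2\left(1-\Phi(x-z)\right).
\]
This also holds for $x=z$ since $\Phi(\textbf{0})=1$.

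The main obstacle is making the duality rigorous on the infinite lattice. One must check that the graphical construction gives a well-defined process with generator (\ref{equ 1.1 generator of the voter model}). The backward paths only ever explore finitely many clocks on $[0,t]$ almost surely, so a standard Harris-type percolation argument suffices. One also needs the identification of the difference walk's hitting probability with $\Phi$, which is routine.
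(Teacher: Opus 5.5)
Your proposal is correct and takes the same route as the paper. Duality with coalescing random walks gives $\mathbb{P}_{\mu_p}(\eta_t\equiv 1 \text{ on } A)=\tilde{\mathbb{E}}\,p^{\mathrm{card}}$, the monotonicity of that cardinality yields $\nu_p$, and the cases $|A|=1,2$ give the two formulas through the probability $\Phi(x-z)$ that the two walks ever coalesce. You also spell out details the paper leaves implicit: the graphical construction behind the duality (which the paper cites from Holley--Liggett), the inclusion--exclusion step turning cylinder convergence into weak convergence, and the difference-walk identification of the meeting probability with $\Phi(x-z)$.
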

Proposition \ref{Proposition 2.1 invariant distribution} is an application of the duality relationship between the voter model and the coalescing random walk, which we recall in Appendix \ref{Appendix A.1}.

According to Proposition \ref{Proposition 2.1 invariant distribution}, $\nu_p$ is an invariant distribution of $\{\eta_t\}_{t\geq 0}$.
According to the transience of the simple random walk on $\mathbb{Z}^d$ for $d\geq 3$, $\Phi(x)<1$ when $x\neq \textbf{0}$ and hence $\nu_p$ does not reduce to a convex combination of $\varpi_1$ and $\varpi_0$ when $d\geq 3$, where $\varpi_a$ is the Dirac measure concentrated on the configuration where all the vertices hold the opinion $a$.

In this paper, we focus on the cases $d=3, 4$, which are not covered by the models discussed in \cite{Xue2026b}. From now on, we assume that $p\in (0, 1)$ is a given constant and the initial state $\eta_0$ of the voter model is distributed with the invariant measure $\nu_p$.

We further introduce some notations and definitions to give our main result. For $p\in (0, 1)$, we define
\[
\chi(p)=2p(1-p)(1-\gamma).
\]
Note that, by Proposition \ref{Proposition 2.1 invariant distribution},
\begin{equation}\label{equ 2.2 second moment of etax-etay}
\chi(p)=\mathbb{E}_{\nu_p}\left((\eta_t(x)-\eta_t(y))^2\right)
\end{equation}
for any $t\geq 0$ and $x, y\in \mathbb{Z}^d$ such that $x\sim y$. We write $\chi(p)$ as $\chi(p, d)$ when we need to emphasize the dependence of $\chi(p)$ on the dimension $d$ of the lattice.

For any different $x, z\in \mathbb{Z}^d$ and $p\in (0, 1)$, we define
\[
\alpha_{x, z}(p)=p\Phi(x-z)+p^2\left(1-\Phi(x-z)\right).
\]
By Proposition \ref{Proposition 2.1 invariant distribution},
\begin{equation}\label{equ 2.3 expectation of etaxetaz}
\alpha_{x, z}(p)=\mathbb{E}_{\nu_p}\left(\eta_t(x)\eta_t(z)\right)
\end{equation}
for any $t\geq 0$.

As in Section \ref{section one}, for any $x, z\in \mathbb{Z}^d$ and $t\geq 0$, we denote by $\xi_t^{x, z}$ the random variable $\int_0^t\eta_s(x)\eta_s(z)ds$. We denote by $\{B_t\}_{t\geq 0}$ the standard real-valued Brownian motion starting from $0$.

Now we give our main result in the case $d=4$.
\begin{theorem}\label{theorem 2.1 main theorem d=4}
Let $d=4$, $p\in (0, 1)$, $x\in \mathbb{Z}^4$ and $\eta_0$ be distributed with $\nu_p$. For any $T>0$, the random process
\[
\left\{\frac{\xi_{tN}^{\textbf{0}, x}-tN\alpha_{\textbf{0}, x}(p)}{\sqrt{N\log N}}:~0\leq t\leq T\right\}
\]
converges weakly, with respect to the uniform topology of $C[0, T]$, to $\{C_4(x)B_t\}_{0\leq t\leq T}$ as $N\rightarrow+\infty$, where
\[
C_4(x)=\sqrt{\frac{\chi(p, 4)}{2\pi^2}}\left(\Phi(x)+2(1-\Phi(x))p\right).
\]
\end{theorem}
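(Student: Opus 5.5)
We use the martingale decomposition method, i.e.\ the Poisson equation associated with $\mathcal{L}$. Write $f(\eta)=\eta(\textbf{0})\eta(x)-\alpha_{\textbf{0},x}(p)$. The first step is to observe how $\mathcal{L}$ acts on functions built from products of two coordinates. For $h:\mathbb{Z}^d\times\mathbb{Z}^d\to\mathbb{R}$ we have
\[
\mathcal{L}\sum_{u,v}h(u,v)\eta(u)\eta(v)=\sum_{u,v}(\mathcal{G}_2 h)(u,v)\eta(u)\eta(v),
\]
where $\mathcal{G}_2$ is the generator of two coalescing walks, that is, two independent walks that are glued once they meet. This is the dual picture from Appendix \ref{Appendix A.1}. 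Since $\mathcal{G}_2$ has no killing, the Poisson equation $\mathcal{L}F=-f$ has no exact solution in $d=4$. We therefore work with the resolvent $F_\lambda=\int_0^\infty e^{-\lambda s}S_sf\,ds$ and take $\lambda=1/N$. Duality gives $S_sf(\eta)=\hat{\mathbb{E}}[\eta(Y^1_s)\eta(Y^2_s)]-\alpha$, where $Y^1,Y^2$ are coalescing walks started at $\textbf{0}$ and $x$. After coalescence this is a single-site functional, so $F_\lambda$ splits into two parts.

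\textbf{A quadratic part.} This part does not involve coalescence. Using $\eta(u)\eta(v)-\ldots$, we split $\eta(u)\eta(v)$ into $p(\eta(u)+\eta(v))$ plus the centered product $(\eta(u)-p)(\eta(v)-p)$. The constant $p$ terms are already removed by $\alpha$.

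\textbf{A linear part.} This part has the form $\sum_y g_\lambda(y)(\eta(y)-p)$. In the long-time limit the two walks have either coalesced, with probability $\Phi(x)$, or they are far apart. So the weight of the effective linear functional is $\Phi(x)+2p(1-\Phi(x))$: after coalescence we get one copy of $\eta(Y_s)$, and before coalescence each of the two linear terms contributes $p$. This is the constant in $C_4(x)$. The quadratic remainder decays like a product of two independent walk factors, which is integrable in $d=4$ and gives only an $O(\sqrt N)$ contribution.

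\textbf{Martingale decomposition.} Next we write
\[
\xi_{tN}-tN\alpha=F_\lambda(\eta_0)-F_\lambda(\eta_{tN})+\lambda\int_0^{tN}F_\lambda(\eta_s)\,ds+M_{tN},
\]
where $M$ is the Dynkin martingale of $F_\lambda$. Its quadratic variation is
\[
\langle M\rangle_{tN}=\int_0^{tN}\sum_{u}\sum_{y\sim u}\bigl(F_\lambda(\eta_s^{u,y})-F_\lambda(\eta_s)\bigr)^2ds .
\]
For the dominant linear part this equals $\int_0^{tN}\sum_{u}\sum_{y\sim u}g_\lambda(u)^2(\eta_s(u)-\eta_s(y))^2ds$. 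Here $g_\lambda(u)\approx (\Phi(x)+2p(1-\Phi(x)))\int_0^\infty e^{-\lambda s}p_s(\textbf{0},u)ds$. In $d=4$ we have $\sum_u\bigl(\int_0^\infty e^{-s/N}p_s(\textbf{0},u)ds\bigr)^2=\int\!\!\int e^{-(s+r)/N}p_{s+r}(\textbf{0},\textbf{0})\,ds\,dr$. Since $p_s(\textbf{0},\textbf{0})\sim(4\pi s)^{-2}\cdot 4$ for the rate-$8$ walk, this sum grows like $c\log N$, with $c$ computed from the local CLT.

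We then need an ergodic/LLN statement replacing $(\eta_s(u)-\eta_s(y))^2$ by its mean $\chi(p,4)$ from \eqref{equ 2.2 second moment of etax-etay}. This requires a variance bound on $\sum_u g_\lambda(u)^2\int_0^{tN}[(\eta_s(u)-\eta_s(y))^2-\chi]ds$, which again goes through duality and four-point correlation decay under $\nu_p$. The outcome is $\langle M\rangle_{tN}/(N\log N)\to C_4(x)^2t$. The martingale FCLT applies once the jumps are checked to be negligible, which holds because $\sup_u|g_\lambda(u)|$ is bounded.

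\textbf{Boundary and drift terms.} These are negligible. $\mathbb{E}F_\lambda(\eta)^2$ is of order $\sum_{u,v}g_\lambda(u)g_\lambda(v)\mathrm{Cov}_{\nu_p}$, and with $\mathrm{Cov}(\eta(u),\eta(v))\asymp\Phi(u-v)\asymp|u-v|^{-2}$ this is $O(N)=o(N\log N)$. Similarly, $\lambda\int_0^{tN}F_\lambda\,ds$ has second moment $o(N\log N)$ by Cauchy–Schwarz and stationarity. For the uniform topology we also need tightness of these remainders, which comes from a moment bound and a maximal inequality on a grid plus continuity.

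\textbf{Main obstacle.} The hardest step will be the precise variance computation in the LLN for $\langle M\rangle$ together with the exact constant. The quadratic part of $F_\lambda$ and the boundary terms have logarithmically critical sums in $d=4$, so every error must be shown to be $o(\log N)$ rather than merely $O(\log N)$. The first approach to try is to bound all error terms by explicit sums of heat-kernel products $\sum_u\int\!\!\int p_s p_r p_{s'}p_{r'}$ using $p_s(\textbf{0},u)\le C(1+s)^{-2}$, and to check that each such sum is $O(1)$ or $O(\sqrt{\log N})$.
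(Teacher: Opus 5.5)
Apart from tightness, your plan is essentially the paper's. The differences are bookkeeping only. The paper uses the time-truncated corrector $\tilde G_N=\int_0^N S(r)H\,dr$ instead of a resolvent with $\lambda=1/N$. Instead of splitting the corrector into linear and quadratic pieces, it expands the squared jump into six terms and evaluates each with the coalescence time/place argument of Lemma \ref{lemma 3.4}. Both routes give the factor $(\Phi(x)+2p(1-\Phi(x)))^2$. The law of large numbers for $\langle M\rangle$, which you expect to be the main obstacle, follows almost immediately from the time-decorrelation bound of Lemma \ref{lemma 3.7 time correlation decay}.

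There is one slip in the constant. For this rate-$8$ walk each coordinate has variance $2s$, so $p_s(\textbf{0},\textbf{0})\sim(4\pi s)^{-2}=1/(16\pi^2s^2)$ with no extra factor $4$. With your factor the limit of $\langle M\rangle_{tN}/(N\log N)$ would be $4C_4(x)^2t$ rather than $C_4(x)^2t$.

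The genuine gap is tightness in $C[0,T]$. Write $A^N_t=(\xi^{\textbf{0},x}_{tN}-tN\alpha_{\textbf{0},x}(p))/\sqrt{N\log N}$. The martingale FCLT plus fixed-time $L^2$ negligibility of $F_\lambda(\eta_0)-F_\lambda(\eta_{tN})$ give only finite-dimensional convergence. To reach the uniform topology you need either $\sup_{t\le T}|F_\lambda(\eta_{tN})|=o_P(\sqrt{N\log N})$ or a direct tightness estimate for $A^N$. The phrase ``a moment bound and a maximal inequality on a grid plus continuity'' supplies neither, for the following reasons.

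\begin{itemize}
\item Your only quantitative bound is $\mathbb{E}_{\nu_p}F_\lambda^2=O(N)$. This gains just $1/\log N$ over the normalisation, so a union bound tolerates only $o(\log N)$ grid points.
\item Between grid points you must control $F_\lambda(\eta_t)-F_\lambda(\eta_{t'})=M_t-M_{t'}-\int_{t'}^t(f-\lambda F_\lambda)(\eta_s)\,ds$. The trivial bound $|\int_{t'}^t f(\eta_s)\,ds|\le|t-t'|$ would require microscopic spacing $o(\sqrt{N\log N})$, i.e.\ about $\sqrt{N/\log N}$ points. Any better bound is the modulus of continuity of $\int f(\eta_s)\,ds$, which is what you are trying to prove.
\item Second moments of increments are also insufficient. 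Lemma \ref{lemma 3.7 time correlation decay} gives $\mathbb{E}_{\nu_p}(A^N_t-A^N_s)^2\le C(t-s)$, and exponent $1$ is too weak for the Kolmogorov--Chentsov criterion.
\end{itemize}

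The paper fills exactly this hole with the fourth-moment bound $\mathbb{E}_{\nu_p}(A^N_t-A^N_s)^4\le K(t-s)^2$. It writes the fourth moment via duality with eight coalescing walks and couples them with four independent coalescing pairs. The centred product then has zero mean unless at least two inter-pair collisions occur. Finally, the integrated probabilities of these double-collision events are shown to be $O((N\log N)^2t^2)$ (Lemmas \ref{lemma 3.8}--\ref{lemma 3.10}). You need an estimate of this kind, or higher-moment bounds on $F_\lambda$ combined with chaining, which lead to the same multi-walk collision estimates.
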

When $x=\textbf{0}$, $\xi_t^{\textbf{0}, x}=\int_0^t \eta_s(\textbf{0})ds$, which is the occupation time of the voter model on $\textbf{0}$. Since $\Phi(\textbf{0})=1$, Theorem \ref{theorem 2.1 main theorem d=4} in the case $x=\textbf{0}$ reduces to the invariance principle for the occupation time of
the voter model on $\mathbb{Z}^4$ given in Theorem 2.1 of \cite{Xue2026}. Note that, although Reference \cite{Xue2026} only deals with that case where $\eta_0$ is distributed with $\mu_p$, the argument given in \cite{Xue2026} to prove the occupation time invariance principle is still applicable when the initial distribution is replaced by $\nu_p$.

The proof of Theorem \ref{theorem 2.1 main theorem d=4} follows from a martingale decomposition strategy, where the main difficulty is the approximate calculation of a quadratic variation process. As far as we understand, the approach in our proof is applicable in deriving the central limit theorem for the centered version of the process $\{\int_0^t\prod_{i=1}^m\eta_s(x_i)ds\}_{t\geq 0}$ for any $m\geq 2$ and $x_1, \ldots, x_m\in \mathbb{Z}^4$. However, since the precise expression of the CLT in the case $m\geq 3$ are very complicated, in this paper we only deal with the case $m=2$ to highlight the main idea of the proof and make our exposition simple. For mathematical details of the proof of Theorem \ref{theorem 2.1 main theorem d=4}, see Section \ref{section three}.

To give our main result in the case $d=3$, we further introduce some notations. For each $d\geq 1$, we denote by $\{\mathcal{W}_t^d\}_{t\geq 0}$ the $\mathbb{R}^d$-valued standard Brownian motion starting from $\textbf{0}$, i.e.,
\[
\mathcal{W}_t^d=\left(B_t^1, B_t^2, \ldots, B_t^d\right),
\]
where $\{B_t^j:~t\geq 0\}_{j=1}^d$ are independent copies of $\{B_t:~t\geq 0\}$. For any $t>0$, we denote by $f_t^d$ the probability density of $\mathcal{W}_t^d$, i.e.,
\[
f_t^d(u)=\left(\frac{1}{\sqrt{2\pi t}}\right)^de^{-\frac{\sum_{j=1}^du_j^2}{2t}}
\]
for any $u=(u_1, \ldots, u_d)\in \mathbb{R}^d$. For any $u\in \mathbb{R}^3$ and $N\geq 1$, we denote by $u_N$ the element
in
\[
\mathbb{Z}^3/\sqrt{N}:=\left\{\frac{x}{\sqrt{N}}:~x\in \mathbb{Z}^3\right\}
\]
such that
\[
u-u_N\in \left[-\frac{1}{2\sqrt{N}}, \frac{1}{2\sqrt{N}}\right)^3.
\]
For any given $u\in \mathbb{R}^3\setminus\{\textbf{0}\}$, we denote by $\{\zeta_t^u\}_{t\geq 0}$ the real-valued Gaussian process with continuous sample path, mean zero and covariance functions given by
\begin{align*}
{\rm Cov}(\zeta_t^u, \zeta_s^u)
=&2\int_s^td\theta_1\int_0^sd\theta_2\int_0^{+\infty}f^3_{2(\theta_1-\theta_2+r)}(\textbf{0})+f^3_{2(\theta_1-\theta_2+r)}(u)dr\\
&+4\int_0^sd\theta_1\int_0^{\theta_1}d\theta_2\int_0^{+\infty}f^3_{2(\theta_2+r)}(\textbf{0})+f^3_{2(\theta_2+r)}(u)dr
\end{align*}
for any $0\leq s\leq t$. Now we give our main result in the case $d=3$.
\begin{theorem}\label{theorem 2.3 main theorem d=3}
Let $d=3$, $p\in (0, 1)$, $u\in \mathbb{R}^3$ such that $u\neq \textbf{0}$ and $\eta_0$ be distributed with $\nu_p$. For any $T>0$, the random process
\[
\left\{\frac{\xi_{tN}^{\textbf{0}, \sqrt{N}u_N}-tN\alpha_{\textbf{0}, \sqrt{N}u_N}(p)}{N^{\frac{3}{4}}}:~0\leq t\leq T\right\}
\]
converges weakly, with respect to the uniform topology of $C[0, T]$, to $\{C_3\zeta_t^u\}_{0\leq t\leq T}$ as $N\rightarrow+\infty$, where
\[
C_3=\sqrt{6(1-\gamma_3)p^3(1-p)}.
\]
\end{theorem}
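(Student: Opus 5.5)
The plan follows the strategy announced in the abstract: approximate the centered quadratic additive functional by a time integral of a \emph{quadratic fluctuation density field} tested against a kernel, prove that this field converges to a generalized Ornstein--Uhlenbeck process, and read off the Gaussian limit together with its covariance.

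\textbf{Step 1: the quadratic fluctuation field.} Write $\bar\eta_s(x)=\eta_s(x)-p$. Then
\[
\eta_s(\textbf{0})\eta_s(z)-\alpha_{\textbf{0},z}(p)=\bar\eta_s(\textbf{0})\bar\eta_s(z)-\big(\alpha_{\textbf{0},z}(p)-p^2\big)+p\big(\bar\eta_s(\textbf{0})+\bar\eta_s(z)\big).
\]
I will define the field
\[
Y^N_t(\phi)=N^{-\frac{3}{4}}\sum_{x\in\mathbb{Z}^3}\phi(x/\sqrt N)\Big(\eta_{tN}(x)\eta_{tN}(x+\sqrt N u_N)-\alpha_{\textbf{0},\sqrt N u_N}(p)\Big)
\]
and the analogous linear field built from $\bar\eta$. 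The scaling $N^{3/4}$ matches $d=3$ occupation time fluctuations.

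The key observation is a local averaging replacement, with $\phi$ approximating $\delta_{\textbf{0}}$ at a mesoscopic scale. Using duality, the time integral of the local product at the origin should equal the time integral of $Y^N$ tested against an approximate identity, up to an $L^2$ error that is $o(N^{3/4})$. For the $L^2$ estimate I will use the duality of Appendix A.1: second moments of products at four sites reduce to coalescence events of four coalescing random walks. In $d=3$ the transience gives summable decay of the Green function, which makes the error bound work. The contribution of the coalesced pairs is where $\Phi$ enters. Because $\sqrt N u_N$ is at macroscopic distance, $\Phi(\sqrt N u_N)\to0$, so the pair $\textbf{0},\sqrt N u_N$ behaves like an almost independent product. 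This explains why $C_3$ carries the factor $p^3(1-p)$: it is $p^2\cdot p(1-p)$ from the linear terms, together with the cross and quadratic pieces.

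\textbf{Step 2: fluctuation limit (the main obstacle).} I must show that $Y^N$ converges to a generalized OU process driven by noise whose covariance involves $\chi(p,3)=2p(1-p)(1-\gamma_3)$. The route is Dynkin martingales: $\mathcal{L}$ applied to a product $\eta(x)\eta(y)$ yields discrete Laplacians in each variable, plus a boundary term when $x\sim y$, which is negligible here because the sites are macroscopically separated. The martingale quadratic variation is a sum over edges of $(\eta(x)-\eta(y))^2$ times other factors. Replacing it by its mean requires a law of large numbers for local functions under $\nu_p$ in $d=3$, again via duality with decay of correlations.

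Tightness in $C[0,T]$ will come from Kolmogorov moment bounds computed by duality. Identifying the limit through the martingale problem gives Gaussianity. The covariance is then computed directly: the kernels $f^3_{2(\theta+r)}(\textbf{0})$ and $f^3_{2(\theta+r)}(u)$ come from two independent walks (total rate doubled) meeting at relative displacement $\textbf{0}$ or $u$, and the $\int_0^\infty dr$ integrates the stationary past. Matching the constant to $C_3^2$ is a bookkeeping step.

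\textbf{Step 3: conclusion.} Combine the approximation from Step 1, which is uniform in $t\le T$ by Doob or moment bounds, with the convergence of the tested field integrated in time. That integral is a continuous functional of the field in the Skorokhod/uniform sense, so the continuous mapping theorem finishes the proof. I expect the hardest part to be controlling the replacement error and the quadratic variation simultaneously. The fallback for this is explicit fourth-moment duality estimates using the bound $p_t(x,y)\le Ct^{-3/2}$.
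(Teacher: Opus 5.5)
Your overall architecture (fluctuation field, approximate identity, O-U limit, covariance computation, separate tightness) is the paper's. The gap is that the field you actually set up cannot carry Step 2, which is where the argument lives.

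\textbf{Step 2: the field is mis-scaled and not closed.} First, the scaling is wrong. For a fixed macroscopic $\phi$, which an O-U limit requires, the linear part $pN^{-3/4}\sum_x\bigl(\phi(x/\sqrt N)+\phi(x/\sqrt N-u_N)\bigr)\bar\eta_{tN}(x)$ of $Y^N_t(\phi)$ has variance of order $N$. This is because ${\rm Cov}_{\nu_p}(\eta(x),\eta(y))=p(1-p)\Phi(x-y)\asymp\|x-y\|_1^{-1}$ in $d=3$. The exponent $3/4$ belongs to the time-integrated one-site functional, not to a fixed-time field: the linear field quoted in the paper uses $N^{-5/4}$, and the paper's two-site field uses $N^{-11/4}$.

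Second, and more fundamentally, a field with the shift $a=\sqrt Nu_N$ frozen is not closed under the dynamics. $\mathcal{L}$ moves the two sites independently, so $\mathcal{L}\bigl(\eta(x)\eta(x+a)\bigr)$ produces $\eta(x\pm e)\eta(x+a)$ and $\eta(x)\eta(x+a\pm e)$, with shifts $a\mp e$ and $a\pm e$. Hence $N\mathcal{L}Y^N_s(\phi)$ contains $N$ times a second difference in the relative coordinate, which cannot be written through $Y^N$. The Dynkin martingale therefore gives no closed martingale problem for $Y^N$.

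The ``Laplacian in each variable'' you invoke needs a field in which both sites vary independently. That is the paper's key choice. It uses the field $\mathcal{Y}^N_t$ on $\mathbb{R}^6$, normalized by $N^{-11/4}$, for which $N\mathcal{L}\mathcal{Y}^N_s(f)=\mathcal{Y}^N_s(\Delta_Nf)+O(N^{-1/4})$ by \eqref{equ 4.14} and $\langle\tilde{\mathcal{M}}^{N,f}\rangle_t\to6\chi(p)p^2[f,f]t$ (Lemma \ref{lemma 4.4}). It tests this field against the product $\hat H_\epsilon(v_1,v_2)$, which averages $v_1$ near $\textbf{0}$ and $v_2$ near $u$ separately, not along the diagonal.

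\textbf{Step 3: mesoscopic cutoff and continuous mapping are incompatible.} If $\phi=\phi_N$ shrinks with $N$, convergence of the field at fixed test functions says nothing about $\int_0^tY^N_r(\phi_N)dr$. Moreover, the limit field is not defined at a delta function; only its time integral is, as an $L^2$ limit. The paper instead keeps $\epsilon$ macroscopic and takes a double limit. The duality estimate \eqref{equ 4.8} gives $\limsup_{\epsilon\to0}\limsup_{N\to+\infty}{\rm Var}_{\nu_p}\bigl(\int_0^t\mathcal{Y}^N_r(\hat H_\epsilon)dr-N^{-3/4}\xi^{\textbf{0},\sqrt Nu_N}_{tN}\bigr)=0$. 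Lemma \ref{lemma 4.2 covariance limit} then computes the $\epsilon\to0$ limit of the Gaussian covariances. This yields only finite-dimensional convergence; tightness in $C[0,T]$ is proved separately from the bound $K_3(t-s)^{3/2}$ via Lemma \ref{lemma 3.7 time correlation decay}.

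\textbf{The constant $C_3$.} Your accounting of $C_3$ is off. In fact $C_3^2=3p^2\chi(p,3)$ comes entirely from the linear part $p\bigl(\bar\eta(\textbf{0})+\bar\eta(\sqrt Nu_N)\bigr)$. The centered quadratic part $\bar\eta(\textbf{0})\bar\eta(\sqrt Nu_N)$ has time-integrated variance $O(N\log N)=o(N^{3/2})$. Its dual contributions either need two separate meetings of walks (the $\mathcal{H}_{ij}$ events of Lemma \ref{lemma appendix A.6 1}) or carry a factor $\Phi(\sqrt Nu_N)=O(N^{-1/2})$.

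Had you proved this, your Step 1 decomposition would give a legitimate and simpler alternative route. You would discard the quadratic part in $L^2$ and reduce the theorem to a joint occupation-time CLT at $\textbf{0}$ and $\sqrt Nu_N$ for the linear field. As written, however, the proposal neither establishes that negligibility nor sets up a fluctuation field whose limit can be identified.
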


Note that a main difference between Theorem \ref{theorem 2.1 main theorem d=4} and Theorem \ref{theorem 2.3 main theorem d=3} is that in the case $d=3$ we require that the distance between the two vertices which the quadratic additive functional depends on is under a spatial scaling with order $\sqrt{N}$ to obtain a CLT with a Gaussian limit process. It is natural to ask whether a CLT with a nontrivial limit process holds in the case $d=3$ when $\sqrt{N}u_N$ is replaced by a fixed vertex $x\in \mathbb{Z}^3$, as in the case $d=4$. We will work on this problem as a further investigation.

The proof of Theorem \ref{theorem 2.3 main theorem d=3} follows from a quadratic fluctuation limit strategy. We first show that the quadratic fluctuation density field
\[
\mathcal{Y}_t^N(du, dv):=\frac{1}{N^{11/4}}\sum_{x\in \mathbb{Z}^3}\sum_{z\in \mathbb{Z}^3}\left(\eta_{tN}(x)\eta_{tN}(z)-\alpha_{x, z}(p)\right)\delta_{x/\sqrt{N}}(du)\delta_{z/\sqrt{N}}(dv)
\]
converges weakly to a generalized Ornstein-Uhlenbeck process. Then we introduce a series of functions $H_\epsilon: \mathbb{R}^6\rightarrow \mathbb{R}$ such that $\mathcal{Y}_t^N(H_\epsilon)\approx \frac{\xi_{tN}^{\textbf{0}, \sqrt{N}u_N}-tN\alpha_{\textbf{0}, \sqrt{N}u_N}(p)}{N^{\frac{3}{4}}}$ for large $N$ and small $\epsilon$. Then the CLT of $\left\{\frac{\xi_{tN}^{\textbf{0}, \sqrt{N}u_N}-tN\alpha_{\textbf{0}, \sqrt{N}u_N}(p)}{N^{\frac{3}{4}}}\right\}_{0\leq t\leq T}$ follows from the CLT of $\mathcal{Y}_t^N$ and a verification of the tightness under the uniform topology of $C[0, T]$. As in the case $d=4$, we think that our approach in the proof of Theorem \ref{theorem 2.3 main theorem d=3} is applicable in deriving the CLT for the centered version of the process
\[
\left\{\int_0^{tN}\prod_{i=1}^m\eta_s(\sqrt{N}(u_i)_N)ds:~0\leq t\leq T\right\}_{N\geq 1}
\]
for any $m\geq 2$ and different $u_1, u_2, \ldots, u_m\in \mathbb{R}^3$. However, to highlight the main idea in the proof and make our exposition simple, we still only deal with the case $m=2$. For mathematical details, see Section \ref{section four}.

\section{Proof of Theorem \ref{theorem 2.1 main theorem d=4}}\label{section three}
In this section, we prove Theorem \ref{theorem 2.1 main theorem d=4}. Our proof utilizes a martingale decomposition strategy, where the quadratic additive functional is decomposed as a martingale plus an error term. We show that the error term in the aforesaid decomposition converges weakly to $0$ and then verify that the martingale term converges weakly to $C_4(x)B_t$ by an approximate calculation of the quadratic variation process. Finally, we complete the proof of Theorem \ref{theorem 2.1 main theorem d=4} by verifying the tightness of the quadratic additive functional processes under the uniform topology of $C[0, T]$.

\subsection{Martingale decomposition}\label{subsection 3.1}
In this section we give our martingale decomposition of the quadratic additive functional and show that the martingale term in which converges weakly to $C_4(x)B_t$. For later use, we first introduce some notations. For given $x\in \mathbb{Z}^4$, let $\left\{\left(\mathcal{X}_{t, 1}^{\textbf{0}, 0}, \mathcal{X}_{t, 2}^{x, 0}\right)\right\}_{t\geq 0}$ be the coalescing random walk defined as in Appendix \ref{Appendix A.1}. For $z_1, z_2\in \mathbb{Z}^4$ and $t\geq 0$, we denote by $q_{t}^{\textbf{0}, x}(z_1, z_2)$ the transition probability
\[
\tilde{\mathbb{P}}\left(\left(\mathcal{X}_{t, 1}^{\textbf{0}, 0}, \mathcal{X}_{t, 2}^{x, 0}\right)=(z_1, z_2)\right).
\]
For each integer $N\geq 1$ and $\eta\in \{0, 1\}^{\mathbb{Z}^4}$, we define
\[
\tilde{G}_N(\eta)=\sum_{z_1\in \mathbb{Z}^4}\sum_{z_2\in \mathbb{Z}^4}\eta(z_1)\eta(z_2)\int_0^Nq_s^{\textbf{0}, x}(z_1, z_2)ds.
\]
For $N\geq 1$ and $t\geq 0$, let
\[
\mathcal{M}_t^N=\tilde{G}_N(\eta_t)-\tilde{G}_N(\eta_0)-\int_0^t\mathcal{L}\tilde{G}_N(\eta_s)ds,
\]
where $\mathcal{L}$ is defined as in \eqref{equ 1.1 generator of the voter model}. By Dynkin's martingale formula, $\{\mathcal{M}_t^N\}_{t\geq 0}$ is a martingale.

Let $H: \{0, 1\}^{\mathbb{Z}^4}\rightarrow \mathbb{R}$ be defined as $H(\eta)=\eta(\textbf{0})\eta(x)$ for any $\eta\in \{0, 1\}^{\mathbb{Z}^4}$. We denote by $\{S(t)\}_{t\geq 0}$ the Markov semi-group of $\{\eta_t\}_{t\geq 0}$. By Proposition \ref{Proposition A.1 dual},
\[
S(t)H(\eta)=\mathbb{E}_\eta H(\eta_t)=\sum_{z_1, z_2}\eta(z_1)\eta(z_2)q_t^{\textbf{0}, x}(z_1, z_2).
\]
Hence,
\[
\tilde{G}_N(\eta)=\int_0^NS(r)H(\eta)dr.
\]
By Kolmogorov backward equation, $\mathcal{L}S(r)=\frac{d}{dr}S(r)$ and hence
\begin{align}\label{equ 3.1}
\mathcal{M}_t^N&=\tilde{G}_N(\eta_t)-\tilde{G}_N(\eta_0)-\int_0^t\left(\int_0^N\frac{d}{dr}S(r)H(\eta_s)dr\right)ds\notag\\
&=\tilde{G}_N(\eta_t)-\tilde{G}_N(\eta_0)-\int_0^t\left(S(N)H(\eta_s)-S(0)H(\eta_s)\right)ds\notag\\
&=\tilde{G}_N(\eta_t)-\tilde{G}_N(\eta_0)-\int_0^tS(N)H(\eta_s)ds+\xi_t^{0, x}.
\end{align}
Let
\[
G_N(\eta)=\tilde{G}_N(\eta)-\mathbb{E}_{\nu_p}\tilde{G}_N(\eta_0)=\sum_{z_1}\sum_{z_2}\left(\eta(z_1)\eta(z_2)-\alpha_{z_1, z_2}(p)\right)\int_0^Nq_s^{\textbf{0}, x}(z_1, z_2)ds,
\]
then by \eqref{equ 3.1},
\[
\mathcal{M}_t^N=G_N(\eta_t)-G_N(\eta_0)-\int_0^tS(N)H(\eta_s)-\alpha_{\textbf{0}, x}(p)ds+\xi_t^{0, x}-t\alpha_{\textbf{0}, x}(p)
\]
and hence
\begin{equation}\label{equ 3.2 martingale decomposition d=4}
\frac{\xi_{tN}^{0, x}-tN\alpha_{\textbf{0}, x}(p)}{\sqrt{N\log N}}=\frac{\mathcal{M}_{tN}^N}{\sqrt{N\log N}}+\varepsilon_1^N,
\end{equation}
where
\[
\varepsilon_1^N=\frac{G_N(\eta_{tN})-G_N(\eta_0)-\int_0^{tN}S(N)H(\eta_s)-\alpha_{\textbf{0}, x}(p)ds}{\sqrt{N\log N}}.
\]
We first deal with the term $\varepsilon_1^N$ in decomposition \eqref{equ 3.2 martingale decomposition d=4}.
\begin{lemma}\label{lemma 3.1}
Let $\eta_0$ be distributed with $\nu_p$, then
\[
\lim_{N\rightarrow+\infty}\varepsilon_1^N=0
\]
in $L^2$.
\end{lemma}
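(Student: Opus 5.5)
The plan is to prove the two pieces of $\varepsilon_1^N$ separately. Write $\varepsilon_1^N=\frac{G_N(\eta_{tN})-G_N(\eta_0)}{\sqrt{N\log N}}-\frac{I_N}{\sqrt{N\log N}}$ with $I_N=\int_0^{tN}\left(S(N)H(\eta_s)-\alpha_{\textbf{0}, x}(p)\right)ds$. Since $\nu_p$ is invariant, $\eta_{tN}$ and $\eta_0$ are both distributed as $\nu_p$, so it suffices to show
\[
\mathbb{E}_{\nu_p}\left(G_N(\eta)^2\right)=O(N) \quad\text{and}\quad \mathbb{E}_{\nu_p}(I_N^2)=O(N).
\]
Both are $o(N\log N)$, which is exactly the room the $\log N$ in the normalization provides.

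Both bounds rest on one covariance estimate. For $s,r\geq 0$ we have $\mathbb{E}_{\nu_p}S(s)H=\alpha_{\textbf{0},x}(p)$ by invariance, and I aim to prove
\[
\left|{\rm Cov}_{\nu_p}\left(S(s)H, S(r)H\right)\right|\leq \frac{C}{1+s+r}.
\]
I would prove it by duality (Proposition \ref{Proposition A.1 dual}). Write $S(s)H(\eta)=\tilde{\mathbb{E}}\,\eta(\mathcal{X}^{\textbf{0}}_{s,1})\eta(\mathcal{X}^x_{s,2})$ for one coalescing pair. Write $S(r)H(\eta)=\tilde{\mathbb{E}}\,\eta(\mathcal{Y}^{\textbf{0}}_{r,1})\eta(\mathcal{Y}^x_{r,2})$ for an independent coalescing pair. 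Then represent the $\nu_p$-moment of the resulting four-point product, via Appendix \ref{Appendix A.1}, by running a coalescing system of four walks from these positions up to time $+\infty$. The expectation factorizes into $\alpha_{z_1,z_2}(p)\alpha_{w_1,w_2}(p)$ on the event that no $\mathcal{X}$-lineage ever meets a $\mathcal{Y}$-lineage. Hence the covariance is bounded by the probability that some $\mathcal{X}$-walk started from $\mathcal{X}_{s,i}$ meets some $\mathcal{Y}$-walk started from $\mathcal{Y}_{r,j}$. The difference of two independent walks is a rate-$2$ simple random walk started from $\mathcal{X}_{s,i}-\mathcal{Y}_{r,j}$. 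In $d=4$ its hitting probability of $\textbf{0}$ is $\Phi(y)\leq C(1+\|y\|^2)^{-1}$. Moreover $\mathcal{X}_{s,i}-\mathcal{Y}_{r,j}$ is, up to coalescence effects that only shrink the relevant sets, a walk of total time $s+r$ started from $\pm x$. So $\tilde{\mathbb{E}}(1+\|\mathcal{X}_{s,i}-\mathcal{Y}_{r,j}\|^2)^{-1}\leq C/(1+s+r)$ by the local CLT, and summing over the four pairs gives the claim.

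With the estimate in hand, both bounds follow by integrating. For $G_N$ we have $G_N(\eta)=\int_0^N (S(s)H(\eta)-\alpha_{\textbf{0},x}(p))ds$, so
\[
\mathbb{E}_{\nu_p}G_N^2=\int_0^N\!\!\int_0^N{\rm Cov}_{\nu_p}(S(s)H,S(r)H)\,ds\,dr\leq C\int_0^N\!\!\int_0^N\frac{ds\,dr}{1+s+r}=O(N).
\]
For $I_N$, Cauchy--Schwarz together with stationarity give $\mathbb{E}_{\nu_p}I_N^2\leq (tN)^2\,{\rm Var}_{\nu_p}(S(N)H)\leq (tN)^2\cdot\frac{C}{1+2N}=O(N)$. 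Dividing each bound by $N\log N$ sends it to $0$, which proves Lemma \ref{lemma 3.1}.

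The main obstacle is the duality step behind the covariance bound. One must check carefully that, after representing the $\nu_p$ four-point function via coalescing walks run to infinity, the discrepancy from the product $\alpha\alpha$ is controlled solely by cross-meetings between the two groups. This needs a coupling of the four-particle coalescing system with two independent two-particle systems that agree until the first cross-meeting. The $d=4$ hitting bound $\Phi(y)\asymp\|y\|^{-2}$ and the Gaussian average $\tilde{\mathbb{E}}\|X_{s+r}\|^{-2}\asymp (s+r)^{-1}$ are standard.
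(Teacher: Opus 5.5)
Your proposal is correct and follows essentially the paper's proof: both reduce the lemma to ${\rm Var}_{\nu_p}(G_N(\eta_0))=O(N)$ and ${\rm Var}_{\nu_p}\bigl(\int_0^{tN}S(N)H(\eta_s)-\alpha_{\textbf{0},x}(p)\,ds\bigr)=O(N)$, using the duality covariance bound \eqref{equ 3.5} (proved in Appendix A.2 by the same two-independent-pairs coupling you describe) together with $p_{t,4}(\textbf{0},\textbf{0})=O(t^{-2})$. The one real difference is that for the time-integral term you use Cauchy--Schwarz with the equal-time bound ${\rm Var}_{\nu_p}(S(N)H)=O(1/N)$ instead of the paper's two-time covariance estimate; this is slightly simpler and loses nothing. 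A small correction: the differences $\mathcal{X}_{s,i}-\mathcal{Y}_{r,j}$ start from $\textbf{0}$ or $\pm x$ (not only $\pm x$), and each is exactly distributed as a walk run for time $s+r$, because every coordinate of a coalescing system is marginally a simple random walk.
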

\proof[Proof of Lemma \ref{lemma 3.1}]
According to the invariance of $\nu_p$,
\[
\mathbb{E}_{\nu_p}G_N(\eta_t)=\mathbb{E}_{\nu_p}\left(S(N)H(\eta_t)-\alpha_{\textbf{0}, x}(p)\right)=0
\]
for all $t\geq 0$. So, to complete the proof, we only need to verify that
\begin{equation}\label{equ 3.3}
\lim_{N\rightarrow+\infty}\frac{1}{N\log N}{\rm Var}_{\nu_p}\left(G_N(\eta_0)\right)=0
\end{equation}
and
\begin{equation}\label{equ 3.4}
\lim_{N\rightarrow+\infty}\frac{1}{N\log N}{\rm Var}_{\nu_p}\left(\int_0^{tN}S(N)H(\eta_s)-\alpha_{\textbf{0}, x}(p)ds\right)=0.
\end{equation}
We first verify \eqref{equ 3.3}. According to the bilinear property of the covariance,
\begin{align*}
&{\rm Var}_{\nu_p}\left(G_N(\eta_0)\right)\\
&=\sum_{z_1, z_2, v_1, v_2\in \mathbb{Z}^4}{\rm Cov}_{\nu_p}\left(\eta_0(z_1)\eta_0(z_2), \eta_0(v_1)\eta_0(v_2)\right)\int_0^N\int_0^N
q_{s_1}^{\textbf{0}, x}(z_1, z_2)q_{s_2}^{\textbf{0}, x}(v_1, v_2)ds_1ds_2.
\end{align*}
By Lemma 3.3 of \cite{Xue2026b}, we have
\begin{equation}\label{equ 3.5}
\left|{\rm Cov}_{\nu_p}\left(\eta_\theta(z_1)\eta_\theta(z_2), \eta_0(v_1)\eta_0(v_2)\right)\right|
\leq K_1\sum_{i=1}^2\sum_{j=1}^2\int_0^{+\infty}p_{r+\theta}(z_i, v_j)dr
\end{equation}
for any $\theta\geq 0$, where $K_1=(\int_0^{+\infty}p_s(\textbf{0}, \textbf{0})ds)^{-1}$. Especially,
\[
\left|{\rm Cov}_{\nu_p}\left(\eta_0(z_1)\eta_0(z_2), \eta_0(v_1)\eta_0(v_2)\right)\right|
\leq K_1\sum_{i=1}^2\sum_{j=1}^2\int_0^{+\infty}p_{r}(z_i, v_j)dr.
\]
For readers not familiar with \cite{Xue2026b}, we give a verification of \eqref{equ 3.5} in Appendix \ref{Appendix A.2}.

Consequently, according to the fact that
\[
\sum_{z_2}q_{s_1}^{\textbf{0}, x}(z_1, z_2)=p_{s_1}(\textbf{0}, z_1)
\]
and other three analogues and
\[
\sum_{z_1}\sum_{v_1}p_{s_1}(\textbf{0}, z_1)p_r(z_1, v_1)p_{s_2}(\textbf{0}, v_1)=p_{s_1+s_2+r}(\textbf{0}, \textbf{0})
\]
and other three analogues, we have
\begin{align*}
{\rm Var}_{\nu_p}\left(G_N(\eta_0)\right)&\leq K_1\int_0^N\int_0^N\int_0^{+\infty}p_{s_1+s_2+r}(\textbf{0}, \textbf{0})+p_{s_1+s_2+r}(x, x)+2p_{s_1+s_2+r}(\textbf{0}, x)dr\\
&\leq 4K_1\int_0^N\int_0^N\int_0^{+\infty}p_{s_1+s_2+r}(\textbf{0}, \textbf{0})ds_1ds_2dr.
\end{align*}
According to the local central limit theorem of the simple random walk (see Theorem 2.1.3 in Chapter 2 of \cite{Lawler2010}),
\[
p_{t, 4}(\textbf{0}, \textbf{0})=O(1)t^{-2}
\]
as $t\rightarrow+\infty$. Consequently, for sufficiently large $N$,
\begin{align*}
{\rm Var}_{\nu_p}\left(G_N(\eta_0)\right)
=O(N)+O(1)\int_1^{N}ds_1\int_0^Nds_2\int_0^{+\infty}(s_1+s_2+r)^{-2}dr=O(N)
\end{align*}
and hence \eqref{equ 3.3} holds.

Now we check \eqref{equ 3.4}, according to the definition of $H$ and Proposition \ref{Proposition A.1 dual},
\[
S(N)H(\eta_s)=\sum_{z_1}\sum_{z_2}\eta_s(z_1)\eta_s(z_2)q_N^{\textbf{0}, x}(z_1, z_2).
\]
Then, according to the invariance of $\nu_p$,
\begin{align*}
&{\rm Var}_{\nu_p}\left(\int_0^{tN}S(N)H(\eta_s)-\alpha_{\textbf{0}, x}(p)ds\right)\\
&=2\sum_{z_1, z_2, v_1, v_2}\int_0^{tN}ds_1\int_0^{s_1}{\rm Cov}_{\nu_p}\left(\eta_{s_2}(z_1)\eta_{s_2}(z_2), \eta_0(v_1)\eta_0(v_2)\right)q_N^{\textbf{0}, x}(z_1, z_2)q_N^{\textbf{0}, x}(v_1, v_2)ds_2.
\end{align*}
Consequently, by \eqref{equ 3.5} and an argument similar to that leading to \eqref{equ 3.3},
\begin{align*}
{\rm Var}_{\nu_p}\left(\int_0^{tN}S(N)H(\eta_s)-\alpha_{\textbf{0}, x}(p)ds\right)
\leq 8K_1\int_0^{tN}ds_1\int_0^{s_1}ds_2\int_0^{+\infty}p_{2N+r+s_2}(\textbf{0}, \textbf{0})dr.
\end{align*}
Consequently, for sufficiently large $N$,
\begin{align*}
{\rm Var}_{\nu_p}\left(\int_0^{tN}S(N)H(\eta_s)-\alpha_{\textbf{0}, x}(p)ds\right)
&=O(1)\int_0^{tN}ds_1\int_0^{s_1}ds_2\int_0^{+\infty}(r+s_2+2N)^{-2}dr\\
&=O(N)
\end{align*}
and hence \eqref{equ 3.4} holds.
\qed

Now we deal with the martingale term in decomposition \eqref{equ 3.2 martingale decomposition d=4}. We have the following crucial lemma.
\begin{lemma}\label{lemma 3.2 martingale clt d=4}
For any $T>0$,
\[
\left\{\frac{\mathcal{M}_{tN}^N}{\sqrt{N\log N}}:~0\leq t\leq T\right\}_{N\geq 1}
\]
converges weakly, with respect to the Skorohod topology of $D[0, T]$, to $\{C_4(x)B_t\}_{0\leq t\leq T}$.
\end{lemma}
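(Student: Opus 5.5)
We will apply the martingale functional central limit theorem (for example Theorem 7.1.4 of Ethier and Kurtz). It suffices to check two things for every $t\in[0,T]$. First, the maximal jump of $\mathcal{M}_{\cdot N}^N/\sqrt{N\log N}$ on $[0,T]$ tends to $0$ in $L^1$ (or in probability). Second, the quadratic variation satisfies
\[
\frac{\langle \mathcal{M}^N\rangle_{tN}}{N\log N}\rightarrow C_4(x)^2t
\]
in probability.

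Since the voter model jumps by flipping one site, the predictable quadratic variation is
\[
\langle \mathcal{M}^N\rangle_{t}=\int_0^t\sum_{y}\sum_{w\sim y}\left(G_N(\eta_s^{y,w})-G_N(\eta_s)\right)^2ds .
\]
A flip at $y$ with $\eta(y)\neq\eta(w)$ changes $\tilde G_N$ by
\[
\pm\Big(\sum_{z}\eta(z)\big(\Lambda_N(y,z)+\Lambda_N(z,y)\big)-\Lambda_N(y,y)\cdot(\text{correction})\Big),
\qquad \Lambda_N(z_1,z_2)=\int_0^Nq_s^{\textbf{0},x}(z_1,z_2)ds .
\]
Every such jump is bounded by $\sup_y\sum_z\Lambda_N(y,z)\leq \sup_y\int_0^Np_s(\textbf{0},y)ds=O(1)$ in $d=4$. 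The jumps of the rescaled martingale are therefore $O((N\log N)^{-1/2})$, and the jump condition is immediate.

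The main work is the quadratic variation. First we compute its expectation under $\nu_p$. Using the coalescing random walk representation of $q$, the pair $(\mathcal{X}_{s,1},\mathcal{X}_{s,2})$ has coalesced with probability close to $\Phi(x)$ for large $s$; if not, both walks are roughly independent with the law of $p_s$. The jump at $y$ is then approximately
\[
(\eta(y)-\eta(w))\Big[\Phi(x)\int_0^N p_s(\textbf{0},y)ds+(1-\Phi(x))\int_0^N p_s(\textbf{0},y)\,\big(\eta\text{-average near the partner walk}\big)ds\cdots\Big].
\]
The independent-walk term contributes a factor $2\bar\eta$, which after averaging should be replaced by $2p$. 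Heuristically, the weight $\sum_z\eta(z)\Lambda_N(y,z)$ concentrates where $y$ is far from the origin, at distance of order $\sqrt{s}$, and there the local density is essentially $p$. This yields the factor $\Phi(x)+2(1-\Phi(x))p$. Squaring, summing over $y$ and $w\sim y$, and using $\mathbb{E}_{\nu_p}(\eta(y)-\eta(w))^2=\chi(p)$ from \eqref{equ 2.2 second moment of etax-etay}, we get
\[
\frac{1}{2}\chi(p)\big(\Phi(x)+2(1-\Phi(x))p\big)^2\cdot 2d\sum_y\Big(\int_0^Np_s(\textbf{0},y)ds\Big)^2.
\]
Here $\sum_y(\int_0^Np_s(\textbf{0},y)ds)^2=\int_0^N\int_0^Np_{s_1+s_2}(\textbf{0},\textbf{0})ds_1ds_2\sim \frac{4}{(2\pi\cdot 2/d)^{2}}\cdots\log N$. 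The local CLT gives $p_t(\textbf{0},\textbf{0})\sim (4\pi t/4\cdot 2)^{-2}$, so this term is $\sim c\log N$, with constant $1/(2\pi^2)$ after the $2d=8$ factor is accounted for. This matches $C_4(x)^2$. In the paper's terms, it is the $d=4$ computation of \cite{Xue2026} for $x=\textbf{0}$, multiplied by the extra prefactor.

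Second, we show that the variance of $\langle\mathcal{M}^N\rangle_{tN}/(N\log N)$ tends to $0$. This is the hardest step. The integrand is a degree-four polynomial in $\eta_s$, and we need decorrelation in time and space for it. We would expand the square, replace $\eta(z)$ by $p+(\eta(z)-p)$, and show that every term containing a centered factor $\eta(z)-p$ contributes $o(\log N)$ after time averaging. For the time averaging we would use the covariance bound \eqref{equ 3.5} and its higher-order analogues, proved by duality as in Appendix \ref{Appendix A.2}, together with $p_t=O(t^{-2})$. The logarithm comes only from $y$ at scale between $1$ and $\sqrt N$, and the fluctuations of the local density average out over time windows of length comparable to $|y|^2$, so this approach should give the $o(1)$ bound. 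We expect this decorrelation estimate to be the main obstacle.

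Once the variance bound holds, the martingale functional central limit theorem gives convergence to $C_4(x)B$ in $D[0,T]$.
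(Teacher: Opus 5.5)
Your reduction matches the paper's: the Ethier--Kurtz martingale FCLT, then via Chebyshev the mean asymptotics \eqref{equ 3.6 QV mean d=4} and the vanishing variance \eqref{equ 3.7 QV variance d=4} of $\langle\mathcal{M}^N\rangle_{tN}/(N\log N)$. Your explicit $O((N\log N)^{-1/2})$ jump bound is a correct detail that the paper leaves implicit. The genuine gap is the variance step. You leave it as the ``main obstacle'' with a centering/multiscale plan that is never carried out, and that plan targets the wrong mechanism. No averaging of local densities over time windows of length $|y|^2$ is needed. Centering $\eta=p+(\eta-p)$ also gains nothing, because the leading term $p^2\sum c\,(\eta_s(y)-\eta_s(w))^2$ is still random and needs the same temporal decorrelation. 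The missing idea is that decorrelation in time is \emph{uniform in space}. Write the integrand as $A_s=\sum_\alpha c_\alpha h_\alpha(\eta_s)$. Each $h_\alpha$ is $(\eta(y)-\eta(w))^2$ times at most two further factors $\eta(v)$, so $|h_\alpha|\leq 1$ and it depends on at most four sites. The coefficients $c_\alpha\geq 0$ have total mass
\[
\sum_\alpha c_\alpha\leq 8\sum_{y}\Big(\int_0^Np_s(\textbf{0},y)ds+\int_0^Np_s(x,y)ds\Big)^2\leq 32\int_0^N\int_0^Np_{s_1+s_2}(\textbf{0},\textbf{0})ds_1ds_2=O(\log N).
\]
Lemma \ref{lemma 3.7 time correlation decay} gives $|{\rm Cov}_{\nu_p}(h_\alpha(\eta_\theta),h_\beta(\eta_0))|\leq\epsilon(\theta):=16K_1\int_\theta^{+\infty}p_r(\textbf{0},\textbf{0})dr$ for \emph{all} sites, and $\epsilon(\theta)=O(1/\theta)$ in $d=4$. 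By stationarity,
\[
{\rm Var}_{\nu_p}\big(\langle\mathcal{M}^N\rangle_{tN}\big)\leq 2\Big(\sum_\alpha c_\alpha\Big)^2\,tN\int_0^{tN}\epsilon(\theta)d\theta=O\big(N(\log N)^3\big)=o\big((N\log N)^2\big).
\]
That is the paper's entire proof of \eqref{equ 3.7 QV variance d=4}.

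Your mean computation is only heuristic, and as written it is not valid: you cannot replace $\eta$ by $p$ inside the jump before squaring. What must be proved is what Lemmas \ref{lemma 3.4}--\ref{lemma 3.6} do. First, expand the squared jump into three types of terms: the coalesced weight $\Lambda_N(y,y)$ squared, the cross terms, and the terms with two uncoalesced partners. Second, show $\sum_y\Lambda_N(y,y)^2\sim\Phi(x)^2\log N/(16\pi^2)$ and the analogues \eqref{equ 3.14}, \eqref{equ 3.17}, using the strong Markov property at the coalescence time, the local CLT and the truncation \eqref{equ 3.9}--\eqref{equ 3.12}. Third, use Lemma \ref{Lemma 3.3 correlation decay d=4} to replace $\mathbb{E}_{\nu_p}[(\eta(y)-\eta(w))^2\eta(z)\eta(z')]$ by $\chi(p)p^2$, which is valid only for well-separated sites. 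Fourth, show via \eqref{equ 3.13} and \eqref{equ 3.16} that near-diagonal configurations contribute $O(1)$ rather than $O(\log N)$. Your constant bookkeeping is also off. There is no factor $\frac12$, since each ordered pair $(y,w)$ with $w\sim y$ contributes $\chi(p)$ in expectation. Moreover $p_t(\textbf{0},\textbf{0})\sim f^4_{2t}(\textbf{0})=(16\pi^2t^2)^{-1}$, so $\sum_y(\int_0^Np_s(\textbf{0},y)ds)^2\sim\log N/(16\pi^2)$. Then $8\chi(p)\log N/(16\pi^2)=\chi(p)\log N/(2\pi^2)$, and this is what produces $C_4(x)^2$.
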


We prove Lemma \ref{lemma 3.2 martingale clt d=4} step by step. Let $\{\langle \mathcal{M}^N\rangle_t\}_{t\geq 0}$ be the quadratic variation process of $\{\mathcal{M}^N_t\}_{t\geq 0}$. By Theorem 1.4 in Chapter 7 of \cite{Ethier1986}, to verify Lemma \ref{lemma 3.2 martingale clt d=4}, we only need to show that
\[
\lim_{N\rightarrow+\infty}\frac{1}{N\log N}\langle \mathcal{M}^N\rangle_{tN}=C_4(x)^2t
\]
in probability for any $t>0$. Therefore, by Chebyshev's inequality, we only need to verify that
\begin{equation}\label{equ 3.6 QV mean d=4}
\lim_{N\rightarrow+\infty}\frac{1}{N\log N}\mathbb{E}_{\nu_p}\langle \mathcal{M}^N\rangle_{tN}=C_4(x)^2t
\end{equation}
and
\begin{equation}\label{equ 3.7 QV variance d=4}
\lim_{N\rightarrow+\infty}{\rm Var}_{\nu_p}\left(\frac{1}{N\log N}\langle \mathcal{M}^N\rangle_{tN}\right)=0.
\end{equation}
Now we verify \eqref{equ 3.6 QV mean d=4} through several steps. We first introduce a correlation decay lemma.

\begin{lemma}\label{Lemma 3.3 correlation decay d=4}
For any $\epsilon>0$, there exists $M=M(\epsilon)<+\infty$ such that
\[
\left|\mathbb{E}_{\nu_p}\left((\eta_0(x)-\eta_0(y))^2\eta_0(z)\eta_0(w)\right)-\chi(p)p^2\right|\leq \epsilon
\]
and
\[
\left|\mathbb{E}_{\nu_p}\left((\eta_0(x)-\eta_0(y))^2\eta_0(z)\right)-\chi(p)p\right|\leq \epsilon
\]
for any $x, y, z, w\in \mathbb{Z}^4$ such that
\[
x\sim y, \|x-z\|_1\geq M, \|z-w\|_1\geq M\text{~and~}\|x-w\|_1\geq M.
\]
\end{lemma}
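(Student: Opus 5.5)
We will use the duality between $\nu_p$ and the coalescing random walk of Appendix \ref{Appendix A.1}. Run coalescing random walks started from the points $x,y,z,w$ until time $+\infty$. Each final cluster then receives an independent Bernoulli($p$) label, and $\eta_0(v)$ is the label of the cluster containing $v$. Expanding $(\eta_0(x)-\eta_0(y))^2=\eta_0(x)+\eta_0(y)-2\eta_0(x)\eta_0(y)$ turns the first expectation into a combination of the correlations $\mathbb{E}_{\nu_p}\prod_{v\in A}\eta_0(v)$ for $A\subseteq\{x,y,z,w\}$. Each such correlation equals $\sum_{\pi}\tilde{\mathbb{P}}(\text{final partition of }A\text{ is }\pi)\,p^{|\pi|}$, where $\pi$ runs over partitions of $A$.

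\textbf{Step 1: decoupling event.} Let $F$ be the event that the walk from $z$ never meets the walks from $x$, $y$ or $w$, and the walk from $w$ never meets the walks from $x$ or $y$. This covers both coalescing and independent walkers. On $F$ the final partition of $\{x,y,z,w\}$ has $\{z\}$ and $\{w\}$ as singleton blocks, while $\{x,y\}$ is merged or not. Hence on $F$ the expression $(\eta_0(x)-\eta_0(y))^2\eta_0(z)\eta_0(w)$ has conditional mean $2p(1-p)\mathbf 1_{\{x,y\text{ not merged}\}}\cdot p^2$.

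\textbf{Step 2: bound $\tilde{\mathbb{P}}(F^c)$.} We couple the coalescing system with independent walks in the standard way: after two particles meet, they follow the path of the lower-indexed one. Any meeting in the coalescing system is then a meeting of two independent walks. So $\tilde{\mathbb{P}}(F^c)$ is at most the sum, over the five relevant pairs, of the probability that two independent simple random walks started at distance at least $M$ ever meet. That probability is $\Phi(v_1-v_2)$, and since $d=4$ is transient, $\Phi(v)\le C\|v\|_1^{-2}\to0$. Here we need $\|y-z\|_1,\|y-w\|_1\ge M-1$, which follows from $x\sim y$. Thus $\tilde{\mathbb{P}}(F^c)\le 5C(M-1)^{-2}$. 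Since the integrand is bounded by $1$, the contribution from $F^c$ is at most this quantity.

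\textbf{Step 3: identify the main term.} On $F$, the event that $x,y$ do not merge has probability $1-\gamma$ up to an error of $\tilde{\mathbb{P}}(F^c)$, because $\tilde{\mathbb{P}}(x,y\text{ never meet})=1-\Phi(x-y)=1-\gamma$. Hence
\[
\mathbb{E}_{\nu_p}\left((\eta_0(x)-\eta_0(y))^2\eta_0(z)\eta_0(w)\right)=2p(1-p)(1-\gamma)p^2+O\left(\tilde{\mathbb{P}}(F^c)\right)=\chi(p)p^2+O(M^{-2}).
\]
Choosing $M$ large gives the bound $\epsilon$. The second inequality follows the same way with $w$ removed.

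The main obstacle is Step 2. Meetings among three or four coalescing particles are not independent pairwise events, so the argument needs the domination of every coalescence by a meeting of independent walks. It also needs care that conditioning on $F$ does not distort the law of the $x,y$ pair; we avoid conditioning altogether by writing everything as $\mathbf 1_F\cdot(\cdot)+\mathbf 1_{F^c}\cdot(\cdot)$ and bounding the second term crudely. Only the decay $\Phi(v)\to0$ as $\|v\|_1\to\infty$ is needed, not its rate.
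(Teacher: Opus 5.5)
Your proposal is correct and follows essentially the same route as the paper: duality with coalescing random walks, a coupling that decouples the walks from $z$ and $w$ from the $(x,y)$ pair, the error bound $\Phi(x-z)+\Phi(x-w)+\Phi(y-z)+\Phi(y-w)+\Phi(z-w)$, and transience of the walk on $\mathbb{Z}^4$. The differences are cosmetic: you work directly with the $t=+\infty$ partition-and-labels representation of $\nu_p$ rather than with $\mu_p$ at finite $t$ followed by $t\to+\infty$, and you dominate $F^c$ through a fully independent lower-index coupling instead of the paper's coupling of the coalescing $(x,y)$ pair with independent walks from $z$ and $w$.
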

Lemma \ref{Lemma 3.3 correlation decay d=4} is easy to catch. Since the simple random walk on $\mathbb{Z}^4$ is transient, for $x, z\in \mathbb{Z}^4$ such that $\|x-z\|_1$ is large, the two coordinates starting from $x, z$ respectively of a coalescing random walk will never be coalesced into a single walk with high probability and consequently $\eta(x)$ and $\eta(z)$ are approximately independent under $\nu_p$. A detailed proof of Lemma \ref{Lemma 3.3 correlation decay d=4} is given in Appendix \ref{Appendix A.3}.

We need Lemma \ref{Lemma 3.3 correlation decay d=4} since terms $(\eta(x)-\eta(y))^2\eta(z)\eta(w), (\eta(x)-\eta(y))^2\eta(z)$ appear in the expression of $\langle \mathcal{M}^N\rangle$. In detail, according to the Dynkin's martingale formula,
\[
\langle \mathcal{M}^N\rangle_{tN}
=\int_0^{tN}\sum_{z_1}\sum_{z_2\sim z_1}\left(G_N(\eta_s^{z_1, z_2})-G_N(\eta_s)\right)^2ds.
\]
According to the definition of $G_N$,
\begin{align*}
\left(G_N(\eta_s^{z_1, z_2})-G_N(\eta_s)\right)^2=&{\rm \uppercase\expandafter{\romannumeral4}}_s^N(z_1, z_2)+{\rm \uppercase\expandafter{\romannumeral5}}_s^N(z_1, z_2)
+{\rm \uppercase\expandafter{\romannumeral6}}_s^N(z_1, z_2)\\
&+{\rm \uppercase\expandafter{\romannumeral7}}_s^N(z_1, z_2)+{\rm \uppercase\expandafter{\romannumeral8}}_s^N(z_1, z_2)+{\rm \uppercase\expandafter{\romannumeral9}}_s^N(z_1, z_2),
\end{align*}
where
\[
{\rm \uppercase\expandafter{\romannumeral4}}_s^N(z_1, z_2)=\left(\eta_s(z_2)-\eta_s(z_1)\right)^2\int_0^N\int_0^Nq_{r_1}^{\textbf{0}, x}(z_1, z_1)q_{r_2}^{\textbf{0}, x}(z_1, z_1)dr_1dr_2,
\]
\begin{align*}
&{\rm \uppercase\expandafter{\romannumeral5}}_s^N(z_1, z_2)\\
&=\sum_{v_2\neq z_1}\sum_{w_2\neq z_1}\left(\eta_s(z_2)-\eta_s(z_1)\right)^2\eta_s(v_2)\eta_s(w_2)\int_0^N\int_0^Nq_{r_1}^{\textbf{0}, x}(z_1, v_2)q_{r_2}^{\textbf{0}, x}(z_1, w_2)dr_1dr_2,
\end{align*}
\begin{align*}
&{\rm \uppercase\expandafter{\romannumeral6}}_s^N(z_1, z_2)\\
&=\sum_{v_1\neq z_1}\sum_{w_1\neq z_1}\left(\eta_s(z_2)-\eta_s(z_1)\right)^2\eta_s(v_1)\eta_s(w_1)\int_0^N\int_0^Nq_{r_1}^{\textbf{0}, x}(v_1, z_1)q_{r_2}^{\textbf{0}, x}(w_1, z_1)dr_1dr_2,
\end{align*}
\begin{align*}
{\rm \uppercase\expandafter{\romannumeral7}}_s^N(z_1, z_2)
=2\sum_{v_2\neq z_1}\left(\eta_s(z_2)-\eta_s(z_1)\right)^2\eta_s(v_2)\int_0^N\int_0^Nq_{r_1}^{\textbf{0}, x}(z_1, z_1)q_{r_2}^{\textbf{0}, x}(z_1, v_2)dr_1dr_2,
\end{align*}
\begin{align*}
{\rm \uppercase\expandafter{\romannumeral8}}_s^N(z_1, z_2)
=2\sum_{v_1\neq z_1}\left(\eta_s(z_2)-\eta_s(z_1)\right)^2\eta_s(v_1)\int_0^N\int_0^Nq_{r_1}^{\textbf{0}, x}(v_1, z_1)q_{r_2}^{\textbf{0}, x}(z_1, z_1)dr_1dr_2,
\end{align*}
and
\begin{align*}
&{\rm \uppercase\expandafter{\romannumeral9}}_s^N(z_1, z_2)\\
&=2\sum_{v_1\neq z_1}\sum_{v_2\neq z_1}\left(\eta_s(z_2)-\eta_s(z_1)\right)^2\eta_s(v_1)\eta_s(v_2)\int_0^N\int_0^Nq_{r_1}^{\textbf{0}, x}(z_1, v_2)q_{r_2}^{\textbf{0}, x}(v_1, z_1)dr_1dr_2.
\end{align*}
The following three lemmas are crucial for us to verify \eqref{equ 3.6 QV mean d=4}.
\begin{lemma}\label{lemma 3.4} For any $t\geq 0$,
\[
\lim_{N\rightarrow+\infty}\frac{1}{N\log N}\int_0^{tN}\sum_{z_1}\sum_{z_2\sim z_1}\mathbb{E}_{\nu_p}{\rm \uppercase\expandafter{\romannumeral4}}_s^N(z_1, z_2)
ds=t\frac{\chi(p)}{2\pi^2}\Phi^2(x).
\]
\end{lemma}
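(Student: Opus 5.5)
By stationarity of $\nu_p$ and \eqref{equ 2.2 second moment of etax-etay}, $\mathbb{E}_{\nu_p}(\eta_s(z_2)-\eta_s(z_1))^2=\chi(p)$ for every $s$ and every $z_2\sim z_1$. Hence, writing $Q_N(z):=\int_0^Nq_r^{\textbf{0},x}(z,z)\,dr$,
\[
\frac{1}{N\log N}\int_0^{tN}\sum_{z_1}\sum_{z_2\sim z_1}\mathbb{E}_{\nu_p}{\rm \uppercase\expandafter{\romannumeral4}}_s^N(z_1,z_2)\,ds=\frac{8t\chi(p)}{\log N}\sum_{z\in\mathbb{Z}^4}Q_N(z)^2 .
\]
The lemma therefore reduces to the purely random-walk asymptotic
\[
\sum_{z}Q_N(z)^2=\frac{\Phi^2(x)}{16\pi^2}\log N\,(1+o(1)).
\]

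To prove this, I will use the structure of the coalescing pair from Appendix \ref{Appendix A.1}. Let $\tau$ be the meeting time of the two walkers started at $\textbf{0}$ and $x$. Before $\tau$, their difference is a simple random walk run at doubled rate, so $\tilde{\mathbb{P}}(\tau<\infty)=\Phi(x)$. After $\tau$, the coalesced walker moves as a single copy of $X$. By the strong Markov property,
\[
q_r^{\textbf{0},x}(z,z)=\tilde{\mathbb{E}}\left[\mathbf{1}_{\{\tau\leq r\}}\,p_{r-\tau}(\mathcal{X}_{\tau,1},z)\right].
\]
The main term is $\Phi(x)\,p_r(\textbf{0},z)$.

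Expanding the square and using Chapman--Kolmogorov, the main term contributes
\[
\Phi(x)^2\sum_z\Big(\int_0^Np_r(\textbf{0},z)\,dr\Big)^2=\Phi(x)^2\int_0^N\!\!\int_0^Np_{r_1+r_2}(\textbf{0},\textbf{0})\,dr_1dr_2 .
\]
The walk generated by $\mathcal{G}$ has per-coordinate variance $2t$, so the local CLT gives $p_{t,4}(\textbf{0},\textbf{0})\sim(4\pi t)^{-2}=(16\pi^2t^2)^{-1}$. Changing variables to $s=r_1+r_2$, the double integral equals $\int^{N}s\cdot(16\pi^2s^2)^{-1}ds+O(1)=\frac{\log N}{16\pi^2}+O(1)$. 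The region $s\in[N,2N]$ and the bounded region near $0$ contribute only $O(1)$, which gives exactly the required constant $\Phi^2(x)/(16\pi^2)$.

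The main obstacle is controlling the error $E_N(z):=Q_N(z)-\Phi(x)\int_0^Np_r(\textbf{0},z)\,dr$, which I plan to do in three steps.
\begin{itemize}
\item Split $E_N$ into the part from $\{r<\tau<\infty\}$ and the part from replacing $p_{r-\tau}(\mathcal{X}_{\tau,1},z)$ by $p_r(\textbf{0},z)$ on $\{\tau\leq r\}$.
\item By Cauchy--Schwarz, it suffices to show $\sum_zE_N(z)^2=o(\log N)$. The cross term is then $O(\sqrt{\log N\cdot o(\log N)})=o(\log N)$.
\item In $d=4$ the difference walk satisfies $\tilde{\mathbb{P}}(M<\tau<\infty)=O(M^{-1})$. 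On $\{\tau\leq M\}$ the displacement $\mathcal{X}_{\tau,1}$ is tight, so the LCLT gradient bound yields $|p_{r-\tau}(y,z)-p_r(\textbf{0},z)|\lesssim(1+M)r^{-5/2}$ summed appropriately. The late-meeting part $\{\tau>M\}$ is bounded by the same $\int\!\!\int p_{r_1+r_2}$ computation with a prefactor $\tilde{\mathbb{P}}(\tau>M,\tau<\infty)^{2}$ (or $O(M^{-1})$ after Cauchy--Schwarz).
\end{itemize}
Letting $N\to\infty$ and then $M\to\infty$ shows that the error is $o(\log N)$, which completes the argument.
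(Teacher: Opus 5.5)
Your argument is correct, but it is organized differently from the paper's.

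\textbf{The paper's route.} The paper never isolates a main term. Using two independent copies of the coalescing pair and the joint law $m(d\theta,dw)$ of $(\tau,\mathcal{X}_{\tau,1}^{\textbf{0},0})$, it writes $\sum_z Q_N(z)^2$ exactly as $\iint K_N\,m(d\theta_1,dw_1)\,m(d\theta_2,dw_2)$, where $K_N=\int_0^{N-\theta_1}\int_0^{N-\theta_2}p_{r_1+r_2}(\textbf{0},w_2-w_1)\,dr_1dr_2$. It then observes two facts: $K_N=\frac{\log N}{16\pi^2}(1+o(1))$ for fixed $(\theta_i,w_i)$, and $K_N=O(\log N)$ uniformly because $p_t(\textbf{0},w)\le p_t(\textbf{0},\textbf{0})$. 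Truncation then gives the limit $\tilde{\mathbb{P}}(\tau_1<\infty,\tau_2<\infty)/(16\pi^2)=\Phi^2(x)/(16\pi^2)$.

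\textbf{What your route buys.} You instead prove the stronger statement $\|Q_N-\Phi(x)A_N\|_{\ell^2}=o(\sqrt{\log N})$, with $A_N=\int_0^Np_r(\textbf{0},\cdot)\,dr$. This gives more than Lemma \ref{lemma 3.4}. By Cauchy--Schwarz it also yields $\sum_zQ_N(z)A_N(z)\sim\Phi(x)\frac{\log N}{16\pi^2}$ and $\sum_zA_N(z)^2\sim\frac{\log N}{16\pi^2}$, i.e.\ \eqref{equ 3.15} and \eqref{equ 3.19}. The paper proves these separately for Lemmas \ref{lemma 3.5} and \ref{lemma 3.6}.

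\textbf{What it costs.} Your sketched error control relies on heat-kernel gradient and time-derivative bounds. Write $D_r(z)=p_{r-\tau}(y,z)-p_r(\textbf{0},z)$. The pointwise bound $|D_r|\lesssim r^{-5/2}$ still has to be turned into an $\ell^2(\mathbb{Z}^4)$ bound, for example via $\sum_z|D_{r_1}(z)||D_{r_2}(z)|\le\sup_z|D_{r_1\vee r_2}(z)|\cdot\sum_z|D_{r_1\wedge r_2}(z)|$ together with the trivial $\ell^1$ bound $2$. You also need a separate truncation of $|\mathcal{X}_{\tau,1}|$, which is tight on $\{\tau\le M\}$ but not bounded by $M$.

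\textbf{A simpler way to control the error.} None of this is needed. Minkowski's inequality gives $\|E_N\|_{\ell^2}\le\tilde{\mathbb{E}}\big[\mathbf{1}_{\{\tau<\infty\}}\,g_N(\tau,\mathcal{X}_{\tau,1})\big]$, where $g_N(\theta,y)=\|A_{(N-\theta)^+}(\cdot-y)-A_N\|_{\ell^2}\le 2\|A_N\|_{\ell^2}$. Expanding $g_N^2$ produces three integrals of $p_{r_1+r_2}$, each equal to $\frac{\log N}{16\pi^2}(1+o(1))$ for fixed $(\theta,y)$. Hence $g_N^2=o(\log N)$ pointwise, and dominated convergence finishes the proof using only the leading-order local CLT. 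This is precisely the paper's mechanism, applied to the difference $Q_N-\Phi(x)A_N$ rather than to the square.
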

\begin{lemma}\label{lemma 3.5} For any $t\geq 0$,
\begin{align*}
&\lim_{N\rightarrow+\infty}\frac{1}{N\log N}\int_0^{tN}\sum_{z_1}\sum_{z_2\sim z_1}\mathbb{E}_{\nu_p}{\rm \uppercase\expandafter{\romannumeral7}}_s^N(z_1, z_2)
ds\\
&=\lim_{N\rightarrow+\infty}\frac{1}{N\log N}\int_0^{tN}\sum_{z_1}\sum_{z_2\sim z_1}\mathbb{E}_{\nu_p}{\rm \uppercase\expandafter{\romannumeral8}}_s^N(z_1, z_2)
ds=t\frac{2\chi(p)p}{2\pi^2}\Phi(x)(1-\Phi(x)).
\end{align*}
\end{lemma}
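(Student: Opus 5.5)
The plan is to reduce the expectation to a sum over space of products of coalescing-walk Green functions, using that $\nu_p$ is invariant. Invariance makes the integrand independent of $s$. By Lemma \ref{Lemma 3.3 correlation decay d=4} (its second estimate), $\mathbb{E}_{\nu_p}\big((\eta(z_2)-\eta(z_1))^2\eta(v_2)\big)\approx \chi(p)p$ once $\|z_1-v_2\|_1\geq M$.

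The quantity to control is
\[
\frac{2t}{\log N}\sum_{z_1}\sum_{z_2\sim z_1}\sum_{v_2\neq z_1}\mathbb{E}_{\nu_p}\big((\eta(z_2)-\eta(z_1))^2\eta(v_2)\big)\int_0^N\int_0^Nq_{r_1}^{\textbf{0},x}(z_1,z_1)q_{r_2}^{\textbf{0},x}(z_1,v_2)dr_1dr_2 .
\]
We split the $v_2$-sum into $\|v_2-z_1\|_1\geq M$ and $\|v_2-z_1\|_1<M$.

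\textbf{Near part.} For the near part, bound $q_{r_2}^{\textbf{0},x}(z_1,v_2)\leq p_{r_2}(x,v_2)$. The pair $(z_1,v_2)$ then ranges over a set with at most $O(M^4)$ choices of $v_2$ per $z_1$. Summing over $z_1$ reduces the contribution to an integral of the form
\[
\int_0^N\int_0^N \sup_{y:\|y\|_1<M} p_{r_1+r_2}(\textbf{0},y)\,dr_1dr_2 .
\]
This follows from Chapman--Kolmogorov applied with $q_{r_1}^{\textbf{0},x}(z_1,z_1)\leq p_{r_1}(\textbf{0},z_1)$. Since $p_{t,4}=O(t^{-2})$, the resulting integral is $O(\log N)$. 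The near part therefore contributes $O(M^4\log N)$ before division, which is $O(1)\cdot M^4$ after dividing by $\log N$. This is too crude.

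The refinement is to use the coalescing structure. On the event that the walks are at $(z_1,z_1)$ at time $r_1$, they have coalesced by time $r_1$ (this holds for $x\neq\textbf{0}$, and for $x=\textbf{0}$ it holds trivially). The walks can be at distinct nearby sites $(z_1,v_2)$ only if they have not coalesced. The key quantity is
\[
\sum_{z_1}q_{r_1}^{\textbf{0},x}(z_1,z_1)\,q_{r_2}^{\textbf{0},x}(z_1,v_2).
\]
Here the first factor requires coalescence before $r_1$, while the second requires non-coalescence up to $r_2$. Separation at time $r_2$ with later coalescence is therefore needed, and the probability that two walks at distance of order $\sqrt{r}$ ever meet is $O(r^{-1})$ in $d=4$. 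This extra decay should make the total $o(\log N)$.

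\textbf{Far part.} In the far part, replace the expectation by $\chi(p)p$ up to an error $\epsilon$ times the same kernel sum. Drop the constraints $v_2\neq z_1$ and $\|v_2-z_1\|_1\geq M$, which is justified by the near-part bound. Summing over $v_2$ then gives
\[
\sum_{v_2}q_{r_2}^{\textbf{0},x}(z_1,v_2)=\tilde{\mathbb{P}}\big(\mathcal{X}_{r_2,1}^{\textbf{0},0}=z_1\big)=p_{r_2}(\textbf{0},z_1),
\]
and we also need to remove the coalesced diagonal $v_2=z_1$ from this sum. Decompose by coalescence. Let $\tau$ be the meeting time, with $\tilde{\mathbb{P}}(\tau<\infty)=\Phi(x)$. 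Then
\[
q_{r_1}^{\textbf{0},x}(z_1,z_1)=\tilde{\mathbb{P}}\big(\tau\leq r_1,\ \mathcal{X}_{r_1}=z_1\big).
\]
Similarly, the off-diagonal mass $\sum_{v_2\neq z_1}q_{r_2}^{\textbf{0},x}(z_1,v_2)$ is essentially $\tilde{\mathbb{P}}\big(\tau>r_2,\ \mathcal{X}_{r_2,1}=z_1\big)$.

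For large $r_1,r_2$, $\tau$ is with high probability small or infinite. So asymptotically
\[
q_{r_1}^{\textbf{0},x}(z_1,z_1)\approx \Phi(x)p_{r_1}(\textbf{0},z_1),\qquad
\sum_{v_2\neq z_1}q_{r_2}^{\textbf{0},x}(z_1,v_2)\approx (1-\Phi(x))p_{r_2}(\textbf{0},z_1).
\]
This splitting should be done carefully through the joint law of the two copies. More precisely, for the sum of the product we need a coupling statement. The first coordinate walk is shared, so the two events (coalesced in copy one, not coalesced in copy two) must be compared along independent copies of the coalescing pair: $q_{r_1}$ and $q_{r_2}$ involve independent systems evaluated at the same $z_1$. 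Hence the product factorizes as
\[
\Phi(x)(1-\Phi(x))\sum_{z_1}p_{r_1}(\textbf{0},z_1)p_{r_2}(\textbf{0},z_1)=\Phi(x)(1-\Phi(x))\,p_{r_1+r_2}(\textbf{0},\textbf{0}),
\]
up to errors coming from $\tau\in(K,\infty)$, which are summable.

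\textbf{Conclusion.} By the local CLT,
\[
p_{r,4}(\textbf{0},\textbf{0})\sim\frac{1}{(2\pi)^2}\left(\frac{4}{2r}\right)^{2}\cdot\frac{1}{4}\sim\frac{1}{4\pi^2r^2}.
\]
The constant is to be checked against Lemma \ref{lemma 3.4}'s normalization, where the analogous computation gives $\chi\Phi^2/(2\pi^2)$. The integral satisfies
\[
\sum_{z_2\sim z_1}\int_0^N\int_0^N (r_1+r_2)^{-2}\,dr_1dr_2\sim 8\log N ,
\]
with the factor $8$ coming from the $2d=8$ neighbours. This yields the stated limit $t\,\frac{2\chi(p)p}{2\pi^2}\Phi(x)(1-\Phi(x))$ after $\epsilon\to0$.

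The term ${\rm \uppercase\expandafter{\romannumeral8}}$ is identical with the roles of the two coordinates swapped, by symmetry. The main obstacle is the near-diagonal and finite-$\tau$ error control, which amounts to showing that these contribute $o(\log N)$. For this I would use $p_{r}=O(r^{-2})$ together with the estimate that the probability of meeting after time $K$ is $O(K^{-1})$, which is uniform in $d=4$.
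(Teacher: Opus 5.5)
Your overall route is the paper's: truncate with Lemma \ref{Lemma 3.3 correlation decay d=4}, show the region $0<\|v_2-z_1\|_1\leq M$ is negligible, and then evaluate the full sum. For the full sum you use $\sum_{v_2\neq z_1}q_{r_2}^{\textbf{0},x}(z_1,v_2)=p_{r_2}(\textbf{0},z_1)-q_{r_2}^{\textbf{0},x}(z_1,z_1)$ and the first-meeting decomposition, which is \eqref{equ 3.15} minus \eqref{equ 3.8}.

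\textbf{The gap: the near region.} The negligibility of the near region, i.e.\ \eqref{equ 3.13}, is never established. Your first bound fails, as you note, because it discards the constraint $\tilde{\mathcal{X}}_{r_2,1}^{\textbf{0},0}=z_1$ in the second copy. The ``refinement'' is only a heuristic. Nothing forces the second pair to coalesce later. The $O(r^{-1})$ meeting estimate you cite, combined with $q_{r_1}^{\textbf{0},x}(z_1,z_1)\leq\sup_z p_{r_1}(\textbf{0},z)$, still leaves a term of order $\log N$.

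The missing idea is short. If $v_2\neq z_1$, the second pair has not coalesced by time $r_2$. Its walk from $x$ therefore agrees up to time $r_2$ with a walk $Y$ from $x$ that is independent of everything else. Hence $q_{r_2}^{\textbf{0},x}(z_1,v_2)\leq p_{r_2}(\textbf{0},z_1)p_{r_2}(x,v_2)\leq p_{r_2}(\textbf{0},z_1)p_{r_2}(\textbf{0},\textbf{0})$. Sum over the at most $(2M+1)^4$ admissible $v_2$, then over $z_1$ using $q_{r_1}^{\textbf{0},x}(z_1,z_1)\leq p_{r_1}(\textbf{0},z_1)$ and Chapman--Kolmogorov. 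This bounds the near region by $(2M+1)^4\int_0^N\int_0^N p_{r_1+r_2}(\textbf{0},\textbf{0})p_{r_2}(\textbf{0},\textbf{0})\,dr_1dr_2\leq(2M+1)^4\left(\int_0^{+\infty}p_s(\textbf{0},\textbf{0})ds\right)^2=O(1)$. The extra integrable factor $p_{r_2}(\textbf{0},\textbf{0})$ is what beats $\log N$; this is Appendix \ref{Appendix A.4}.

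\textbf{The constant.} Your constant is wrong, so the claim that it ``yields the stated limit'' does not hold as written. Each coordinate of $X_r$ has variance $2r$, so $p_{r,4}(\textbf{0},\textbf{0})\sim f_2^4(\textbf{0})r^{-2}=\frac{1}{16\pi^2r^2}$, not $\frac{1}{4\pi^2r^2}$. Your value would give four times the stated limit, and would also fail the check against Lemma \ref{lemma 3.4} that you propose. With the correct value you get $2\cdot 8\cdot\chi(p)p\cdot\frac{\Phi(x)(1-\Phi(x))}{16\pi^2}=\frac{\chi(p)p}{\pi^2}\Phi(x)(1-\Phi(x))$, as required.

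\textbf{The finite-$\tau$ errors.} These are not ``summable''. They are $O(\log N)\,\tilde{\mathbb{P}}(K\leq\tau<+\infty)$, and become negligible only after dividing by $\log N$ and letting $K\to+\infty$. You also need tightness of the meeting location, as in \eqref{equ 3.9}--\eqref{equ 3.12}.
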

\begin{lemma}\label{lemma 3.6} For any $t\geq 0$,
\begin{align*}
&\lim_{N\rightarrow+\infty}\frac{1}{N\log N}\int_0^{tN}\sum_{z_1}\sum_{z_2\sim z_1}\mathbb{E}_{\nu_p}{\rm \uppercase\expandafter{\romannumeral5}}_s^N(z_1, z_2)
ds\\
&=\lim_{N\rightarrow+\infty}\frac{1}{N\log N}\int_0^{tN}\sum_{z_1}\sum_{z_2\sim z_1}\mathbb{E}_{\nu_p}{\rm \uppercase\expandafter{\romannumeral6}}_s^N(z_1, z_2)
ds\\
&=\frac{1}{2}\lim_{N\rightarrow+\infty}\frac{1}{N\log N}\int_0^{tN}\sum_{z_1}\sum_{z_2\sim z_1}\mathbb{E}_{\nu_p}{\rm \uppercase\expandafter{\romannumeral9}}_s^N(z_1, z_2)
ds=t\frac{\chi(p)p^2}{2\pi^2}(1-\Phi(x))^2.
\end{align*}
\end{lemma}
According to the definition of $C_4(x)$, Equation \eqref{equ 3.6 QV mean d=4} follows from Lemmas \ref{lemma 3.4}-\ref{lemma 3.6}.

Now we prove Lemma \ref{lemma 3.4}.
\proof[Proof of Lemma \ref{lemma 3.4}]
According to the invariance of $\nu_p$, for any $s\geq 0$ and $z_1\sim z_2$,
\[
\mathbb{E}_{\nu_p}\left(\left(\eta_s(z_2)-\eta_s(z_1)\right)^2\right)=\chi(p).
\]
Then, since each vertex on $\mathbb{Z}^4$ has $8$ neighbors, to complete this proof we only need to show that
\begin{equation}\label{equ 3.8}
\lim_{N\rightarrow+\infty}\frac{1}{\log N}\sum_{z_1\in \mathbb{Z}^4}\int_0^N\int_0^Nq_{r_1}^{\textbf{0}, x}(z_1, z_1)q_{r_2}^{\textbf{0}, x}(z_1, z_1)dr_1dr_2
=\frac{\Phi^2(x)}{16\pi^2}.
\end{equation}
Now we verify \eqref{equ 3.8}. According to the definition of $q_t^{\textbf{0}, x}$,
\[
\sum_{z_1}q_{r_1}^{\textbf{0}, x}(z_1, z_1)q_{r_2}^{\textbf{0}, x}(z_1, z_1)
=\tilde{\mathbb{P}}\left(\mathcal{X}_{r_1,1}^{\textbf{0}, 0}=\mathcal{X}_{r_1, 2}^{x, 0}=\tilde{\mathcal{X}}_{r_2, 1}^{\textbf{0}, 0}=\tilde{\mathcal{X}}_{r_2, 2}^{x, 0}\right),
\]
where $\left\{\left(\tilde{\mathcal{X}}_{t, 1}^{\textbf{0}, 0}, \tilde{\mathcal{X}}_{t, 2}^{x, 0}\right)\right\}_{t\geq 0}$ is an independent copy of $\left\{\left(\mathcal{X}_{t, 1}^{\textbf{0}, 0}, \mathcal{X}_{t, 2}^{x, 0}\right)\right\}_{t\geq 0}$.

Let $\tau_1=\inf\{t:~\mathcal{X}_{t, 1}^{\textbf{0}, 0}=\mathcal{X}_{t, 2}^{x, 0}\}$ and $\tau_2=\inf\{t:~\tilde{\mathcal{X}}_{t, 1}^{\textbf{0}, 0}=\tilde{\mathcal{X}}_{t, 2}^{x, 0}\}$, then
\[
\tilde{\mathbb{P}}\left(\mathcal{X}_{r_1,1}^{\textbf{0}, 0}=\mathcal{X}_{r_1, 2}^{x, 0}=\tilde{\mathcal{X}}_{r_2, 1}^{\textbf{0}, 0}=\tilde{\mathcal{X}}_{r_2, 2}^{x, 0}\right)=\tilde{\mathbb{P}}\left(\tau_1\leq r_1, \tau_2\leq r_2, \mathcal{X}_{r_1, 1}^{\textbf{0}, 0}=\tilde{\mathcal{X}}_{r_2, 1}^{\textbf{0}, 0}\right).
\]
Let $m(d\theta, dw)$ be the joint distribution of $\left(\tau_1, \mathcal{X}_{\tau_1, 1}^{\textbf{0}, 0}\right)$, then according to the Markov property,
\begin{align*}
&\tilde{\mathbb{P}}\left(\tau_1\leq r_1, \tau_2\leq r_2, \mathcal{X}_{r_1, 1}^{\textbf{0}, 0}=\tilde{\mathcal{X}}_{r_2, 1}^{\textbf{0}, 0}\right)\\
&=\sum_{w_1\in \mathbb{Z}^4}\sum_{w_2\in \mathbb{Z}^4}\int_0^{r_1}\int_0^{r_2}p_{r_1+r_2-\theta_1-\theta_2}(w_1, w_2)m(d\theta_1, dw_1)m(d\theta_2, dw_2).
\end{align*}
Therefore,
\begin{align*}
&\int_0^N\int_0^N\tilde{\mathbb{P}}\left(\mathcal{X}_{r_1,1}^{\textbf{0}, 0}=\mathcal{X}_{r_1, 2}^{x, 0}=\tilde{\mathcal{X}}_{r_2, 1}^{\textbf{0}, 0}=\tilde{\mathcal{X}}_{r_2, 2}^{x, 0}\right)dr_1dr_2\\
&=\sum_{w_1\in \mathbb{Z}^4}\sum_{w_2\in \mathbb{Z}^4}\int_0^N\int_0^N\left(\int_{\theta_1}^N\int_{\theta_2}^Np_{r_1+r_2-\theta_1-\theta_2}(w_1, w_2)dr_1dr_2\right)m(d\theta_1, dw_1)m(d\theta_2, dw_2)\\
&=\sum_{w_1, w_2\in \mathbb{Z}^4}\int_0^N\int_0^N\left(\int_{0}^{N-\theta_1}\int_{0}^{N-\theta_2}p_{r_1+r_2}(\textbf{0}, w_2-w_1)dr_1dr_2\right)m(d\theta_1, dw_1)m(d\theta_2, dw_2).
\end{align*}
According to the local central limit theorem of the simple random walk on $\mathbb{Z}^4$ (see Theorem 2.1.3 in Chapter 2 of \cite{Lawler2010}), for any given $M>0$,
\[
\lim_{t\rightarrow+\infty}\sup_{z\in \mathbb{Z}^4:\|z\|_1\leq M}\left|\frac{p_t(\textbf{0}, z)}{t^{-2}}-f^4_2(\textbf{0})\right|=0.
\]
where $f_t^4$ is defined as in Section \ref{section two}. Then, according to the fact that $f_2^4(\textbf{0})=\frac{1}{16\pi^2}$ and
\[
\int_1^N\int_1^N(r_1+r_2)^{-2}dr_1dr_2=\log N(1+o(1)),
\]
we have
\begin{align}\label{equ 3.9}
&\sum_{\|w_1\|_1\leq M, \|w_2\|\leq M}\int_0^{T}\int_0^{T}\left(\int_{0}^{N-\theta_1}\int_{0}^{N-\theta_2}p_{r_1+r_2}(\textbf{0}, w_2-w_1)dr_1dr_2\right)m(d\theta_1, dw_1)m(d\theta_2, dw_2)\notag\\
&=\sum_{\|w_1\|_1\leq M, \|w_2\|\leq M}\int_0^{T}\int_0^{T}\frac{1}{16\pi^2}m(d\theta_1, dw_1)m(d\theta_2, dw_2)(1+o(1))\log N\notag\\
&=\frac{(1+o(1))\log N}{16\pi^2}\tilde{\mathbb{P}}\left(\tau_1\leq T, \tau_2\leq T, \|\mathcal{X}_{\tau_1, 1}^{\textbf{0}, 0}\|_1\leq M,
\|\tilde{\mathcal{X}}_{\tau_2, 1}^{\textbf{0}, 0}\|_1\leq M\right)
\end{align}
for any $M>0, T>0$ and sufficiently large $N$.

According to the definition of $m(d\theta, dw)$,
\begin{align}\label{equ 3.10}
&\limsup_{M\rightarrow+\infty}\sum_{\|w_1\|_1\geq M \text{~or~} \|w_2\|_1\geq M}\int_0^{T}\int_0^{T}1m(d\theta_1, dw_1)m(d\theta_2, dw_2)\notag\\
&\leq 2\limsup_{M\rightarrow+\infty}\tilde{\mathbb{P}}\left(\sup_{0\leq t\leq T}\|\mathcal{X}_{t, 1}^{\textbf{0}, 0}\|_1\geq M\right)=0.
\end{align}
and
\begin{align}\label{equ 3.11}
&\limsup_{T\rightarrow+\infty}\sum_{w_1, w_2\in \mathbb{Z}^4}\int\int_{T\leq \theta_1<+\infty\text{~or~}T\leq \theta_2<+\infty}1m(d\theta_1, dw_1)m(d\theta_2, dw_2)\notag\\
&\leq 2\limsup_{T\rightarrow+\infty}\tilde{\mathbb{P}}\left(T\leq \tau_1<+\infty\right)=0.
\end{align}
According to the fact that $p_t(\textbf{0}, \textbf{0})=O(1)t^{-2}$ for sufficiently large $t$, for any $w_1, w_2\in \mathbb{Z}^4, \theta_1, \theta_2\leq N$,
\begin{align}\label{equ 3.12}
&\int_{0}^{N-\theta_1}\int_{0}^{N-\theta_2}p_{r_1+r_2}(\textbf{0}, w_2-w_1)dr_1dr_2\notag\\
&\leq \int_{0}^N\int_0^Np_{r_1+r_2}(\textbf{0}, \textbf{0})dr_1dr_2=O(1)\log N
\end{align}
for sufficiently large $N$. By \eqref{equ 3.9}-\eqref{equ 3.12} and a classic truncation argument, we have
\begin{align*}
&\int_0^N\int_0^N\tilde{\mathbb{P}}\left(\mathcal{X}_{r_1,1}^{\textbf{0}, 0}=\mathcal{X}_{r_1, 2}^{x, 0}=\tilde{\mathcal{X}}_{r_2, 1}^{\textbf{0}, 0}=\tilde{\mathcal{X}}_{r_2, 2}^{x, 0}\right)dr_1dr_2\\
&=\frac{(1+o(1))\log N}{16\pi^2}\tilde{\mathbb{P}}(\tau_1<+\infty, \tau_2<+\infty)=\frac{(1+o(1))\log N}{16\pi^2}\Phi^2(x)
\end{align*}
for sufficiently large $N$ and hence \eqref{equ 3.8} holds.
\qed

Now we prove Lemma \ref{lemma 3.5}.
\proof[Proof of Lemma \ref{lemma 3.5}]
We only deal with the ${\rm \uppercase\expandafter{\romannumeral7}}_s^N$ part since the other part follows from a similar argument. For any $M>0$, we claim that
\begin{equation}\label{equ 3.13}
\lim_{N\rightarrow+\infty}\frac{1}{\log N}\sum_{z_1\in \mathbb{Z}^4}\sum_{v_2:0<\|v_2-z_1\|_1\leq M}\int_0^N\int_0^Nq_{r_1}^{\textbf{0}, x}(z_1, z_1)q_{r_2}^{\textbf{0}, x}(z_1, v_2)dr_1dr_2=0.
\end{equation}
We verify \eqref{equ 3.13} in Appendix \ref{Appendix A.4}. According to \eqref{equ 3.13}, Lemma \ref{Lemma 3.3 correlation decay d=4} and a classic truncation argument, to complete this proof we only need to verify that
\begin{equation}\label{equ 3.14}
\lim_{N\rightarrow+\infty}\frac{1}{\log N}\sum_{z_1\in \mathbb{Z}^4}\sum_{v_2\neq z_1}\int_0^N\int_0^Nq_{r_1}^{\textbf{0}, x}(z_1, z_1)q_{r_2}^{\textbf{0}, x}(z_1, v_2)dr_1dr_2=\frac{\Phi(x)(1-\Phi(x))}{16\pi^2},
\end{equation}
which is an analogue of \eqref{equ 3.8}. By \eqref{equ 3.8} and the fact that
\[
\sum_{v_2\in \mathbb{Z}^4}q_{r_2}^{\textbf{0}, x}(z_1, v_2)=p_{r_2}(\textbf{0}, z_1),
\]
to verify \eqref{equ 3.14}, we only need to show that
\begin{equation}\label{equ 3.15}
\lim_{N\rightarrow+\infty}\frac{1}{\log N}\sum_{z_1\in \mathbb{Z}^4}\int_0^N\int_0^Nq_{r_1}^{\textbf{0}, x}(z_1, z_1)p_{r_2}(\textbf{0}, z_1)dr_1dr_2=\frac{\Phi(x)}{16\pi^2}.
\end{equation}
Note that
\begin{align*}
\sum_{z_1\in \mathbb{Z}^4}q_{r_1}^{\textbf{0}, x}(z_1, z_1)p_{r_2}(\textbf{0}, z_1)&=\tilde{\mathbb{P}}\left(\mathcal{X}_{r_1,1}^{\textbf{0}, 0}=\mathcal{X}_{r_1, 2}^{x, 0}=\tilde{\mathcal{X}}_{r_2, 1}^{\textbf{0}, 0}\right)\\
&=\tilde{\mathbb{P}}\left(\tau_1\leq r_1, \mathcal{X}_{r_1, 1}^{\textbf{0}, 0}=\tilde{\mathcal{X}}_{r_2, 1}^{\textbf{0}, 0}\right),
\end{align*}
where all notations are defined as in the proof of Lemma \ref{lemma 3.4}. According to an argument similar to that leading to \eqref{equ 3.8},
\begin{align*}
&\int_0^N\int_0^N\tilde{\mathbb{P}}\left(\tau_1\leq r_1, \mathcal{X}_{r_1, 1}^{\textbf{0}, 0}=\tilde{\mathcal{X}}_{r_2, 1}^{\textbf{0}, 0}\right)dr_1dr_2\\
&=\frac{(1+o(1))\log N}{16\pi^2}\tilde{\mathbb{P}}(\tau_1<+\infty)=\frac{(1+o(1))\log N}{16\pi^2}\Phi(x)
\end{align*}
for sufficiently large $N$ and hence \eqref{equ 3.15} holds.
\qed

Now we prove Lemma \ref{lemma 3.6}.
\proof[Proof of Lemma \ref{lemma 3.6}]
We only deal with the ${\rm \uppercase\expandafter{\romannumeral5}}_s^N$ part since the other two parts follow from similar arguments. For any $z_1, v, w\in \mathbb{Z}^4$, we denote by $\psi(z_1, v, w)$ the term
\[
\min\left\{\|z_1-v\|_1, \|z_1-w\|_1, \|v-w\|_1\right\}.
\]
For any $M>0$ and $z_1\in \mathbb{Z}^4$, we define
\[
\mathcal{R}(M, z_1)
=\left\{(v, w):~v, w\in \mathbb{Z}^4, v\neq z_1, w\neq z_1, \psi(z_1, v, w)\leq M\right\}.
\]
We claim that, for any $M>0$,
\begin{equation}\label{equ 3.16}
\lim_{N\rightarrow+\infty}\frac{1}{\log N}\sum_{z_1\in \mathbb{Z}^4}
\sum_{(v_2, w_2)\in \mathcal{R}(M, z_1)}\int_0^N\int_0^Nq_{r_1}^{\textbf{0}, x}(z_1, v_2)q_{r_2}^{\textbf{0}, x}(z_1, w_2)dr_1dr_2=0,
\end{equation}
which is an analogue of \eqref{equ 3.13}. The verification of \eqref{equ 3.16} is given in Appendix \ref{Appendix A.5}. By \eqref{equ 3.16}, Lemma \ref{Lemma 3.3 correlation decay d=4} and a classic truncation argument, to complete the proof, we only need to verify that
\begin{equation}\label{equ 3.17}
\lim_{N\rightarrow+\infty}\frac{1}{\log N}\sum_{z_1\in \mathbb{Z}^4}\sum_{v_2\neq z_1}\sum_{w_2\neq z_1}\int_0^N\int_0^Nq_{r_1}^{\textbf{0}, x}(z_1, v_2)q_{r_2}^{\textbf{0}, x}(z_1, w_2)
dr_1dr_2=\frac{\left(1-\Phi(x)\right)^2}{16\pi^2}.
\end{equation}
Note that
\[
\sum_{v_2\in \mathbb{Z}^4}q_{r_1}^{\textbf{0}, x}(z_1, v_2)=p_{r_1}(\textbf{0}, z_1).
\]
Hence, by \eqref{equ 3.14}, to verify \eqref{equ 3.17} we only need to show that
\begin{equation}\label{equ 3.18}
\lim_{N\rightarrow+\infty}\frac{1}{\log N}\sum_{z_1\in \mathbb{Z}^4}\sum_{w_2\neq z_1}\int_0^N\int_0^Np_{r_1}(\textbf{0}, z_1)q_{r_2}^{\textbf{0}, x}(z_1, w_2)
dr_1dr_2=\frac{1-\Phi(x)}{16\pi^2}.
\end{equation}
Furthermore, according to \eqref{equ 3.15} and the fact that $\sum_{w_2\in \mathbb{Z}^4}q_{r_2}^{\textbf{0}, x}(z_1, w_2)=p_{r_2}(\textbf{0}, z_1)$, to verify \eqref{equ 3.18} we only need to show that
\begin{equation}\label{equ 3.19}
\lim_{N\rightarrow+\infty}\frac{1}{\log N}\sum_{z_1\in \mathbb{Z}^4}\int_0^N\int_0^Np_{r_1}(\textbf{0}, z_1)p_{r_2}(\textbf{0}, z_1)
dr_1dr_2=\frac{1}{16\pi^2}.
\end{equation}
Since
\[
\sum_{z_1\in \mathbb{Z}^4}p_{r_1}(\textbf{0}, z_1)p_{r_2}(\textbf{0}, z_1)=p_{r_1+r_2}(\textbf{0}, \textbf{0}),
\]
Equation \eqref{equ 3.19} follows from the fact that
\[
p_{t, 4}(\textbf{0}, \textbf{0})=(1+o(1))t^{-2}f_2^4(\textbf{0})
=(1+o(1))t^{-2}\frac{1}{16\pi^2}
\]
as $t\rightarrow+\infty$. Since \eqref{equ 3.19} holds, the proof is complete.
\qed

As we have explained, Equation \eqref{equ 3.6 QV mean d=4} follows from Lemmas \ref{lemma 3.4}-\ref{lemma 3.6}. Hence, we only need to verify \eqref{equ 3.7 QV variance d=4} to complete the proof of Lemma \ref{lemma 3.2 martingale clt d=4}. The verification of \eqref{equ 3.7 QV variance d=4} utilizes a lemma about the correlation decay between $\eta_t$ and $\eta_0$ as $t\rightarrow+\infty$. So we first recall this correlation decay lemma. For any integer $m\geq 1$, $x_1, \ldots, x_m\in \mathbb{Z}^d$, $\eta\in \{0, 1\}^{\mathbb{Z}^d}$ and $H: \{0, 1\}^{m}\rightarrow \mathbb{R}$, we denote by $H(\eta, x_1, \ldots, x_m)$ the term
\[
H(\eta(x_1), \eta(x_2), \ldots, \eta(x_m)).
\]
We denote by $\|H\|_\infty$ the maximum of $|H|$.
We have the following lemma, which is a special case of Lemma 3.3 in \cite{Xue2026b}.
\begin{lemma}\label{lemma 3.7 time correlation decay}(\cite{Xue2026b})
For any $d\geq 3$, $m\geq 1$, $H_1, H_2:\{0, 1\}^{\mathbb{Z}^m}\rightarrow \mathbb{R}$, $t\geq 0$ and
\[
x_1, \ldots, x_m, y_1, \ldots, y_m\in \mathbb{Z}^d,
\]
we have
\[
\left|{\rm Cov}_{\nu_p}\left(H_1(\eta_t, x_1, \ldots, x_m), H_2(\eta_0, y_1, \ldots, y_m)\right)\right|
\leq m^2K_1\|H_1\|_\infty\|H_2\|_\infty\int_t^{+\infty}p_{r, d}(\textbf{0}, \textbf{0})dr,
\]
where $K_1=\left(\int_0^{+\infty}p_{r, d}(\textbf{0}, \textbf{0})dr\right)^{-1}$.
\end{lemma}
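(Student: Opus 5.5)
We will prove Lemma \ref{lemma 3.7 time correlation decay} through the duality between the voter model and coalescing random walks (Proposition \ref{Proposition A.1 dual}), together with a coupling of the dual walks started at time $t$ from $x_1,\ldots,x_m$ with an independent family started from $y_1,\ldots,y_m$. Since $\nu_p$ is invariant and is the law of $\eta_\infty$ started from $\mu_p$, we may write $\eta_0\sim\nu_p$ as the voter model run for a very long time $S$ from $\mu_p$, so that $\eta_t$ and $\eta_0$ are the configurations at times $S+t$ and $S$ of one graphical construction, and then let $S\to\infty$. First reduce to indicator products. Any $H:\{0,1\}^m\to\mathbb{R}$ is a linear combination of functions of the form $\prod_{i\in A}\eta(x_i)\prod_{i\notin A}(1-\eta(x_i))$, and it is cleaner not to expand in monomials, since that would cost constants like $2^m$. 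Instead we use the fact that the covariance is controlled by a coupling event.

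Concretely, run the dual backward in time. From the sites $x_1,\ldots,x_m$ at time $S+t$, follow the coalescing walks back to time $S$, and then further back to time $0$. From $y_1,\ldots,y_m$ at time $S$, follow coalescing walks back to time $0$. The values $\eta_{S+t}(x_i)$ and $\eta_S(y_j)$ are the $\mu_p$-values read at the time-$0$ endpoints of these walks. We construct a second, independent system: the walks from the $y_j$ use an independent copy of the graphical representation, so they form an independent coalescing system. Under this independent system the pair $(H_1,H_2)$ has the product law. The two systems can be coupled to agree (i.e.\ give the same configurations as the true joint dual) on the event $E$ that no walk of the $x$-family ever meets a walk of the $y$-family during the backward time interval $[0,S]$. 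Hence
\[
|{\rm Cov}|\le 2\|H_1\|_\infty\|H_2\|_\infty\,\tilde{\mathbb{P}}(E^c)
\]
(with the factor arranged so that it can be absorbed). The constant $2$ is sharpened to $1$ by a standard argument using the exchange of the two systems symmetrically.

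It remains to bound $\tilde{\mathbb{P}}(E^c)\le\sum_{i,j}\tilde{\mathbb{P}}(\text{walk from }x_i\text{ meets walk from }y_j)$, using that after coalescence the walks follow existing paths. The walk from $x_i$ has already run for time $t$ when it reaches backward time $S$. The difference of the two walks is then a simple random walk at rate $2$ started at position $x_i-y_j+X_t$, so the meeting probability is $\sum_z p_{2t}(x_i-y_j,z)\Phi(z)$ up to rate normalisation. Using the last-exit/Green function identity $\Phi(z)=G(z)/G(\textbf{0})$ with $G(z)=\int_0^\infty p_r(\textbf{0},z)dr$, this is
\[
K_1\int_0^\infty p_{r+t}(x_i,y_j)\,dr\le K_1\int_t^\infty p_r(\textbf{0},\textbf{0})\,dr,
\]
where the inequality uses $p_r(x,y)\le p_r(\textbf{0},\textbf{0})$ (Cauchy--Schwarz/symmetry). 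Summing over the $m^2$ pairs gives the claimed bound.

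The main obstacle is the time normalisation. One must check whether the difference walk runs at rate $2$ and whether the resulting $p_{2r}$ versus $p_r$ discrepancy is absorbed correctly; I would handle this by working with the walk's half-time parametrisation, noting $\int_t^\infty p_{2r}\le\int_t^\infty p_r$. A second issue is verifying that the coupling error is exactly the union of pairwise meeting events. This needs care because walks in the $x$-family may coalesce among themselves first, but coalescence only removes paths, so the union bound still holds.
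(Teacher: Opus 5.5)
Your route is the paper's own: the argument of Appendix A.2 for \eqref{equ 3.5}, run with general $H_1,H_2$. It uses duality, replaces the $y$-family by an independent coalescing system that agrees with the true one up to the first cross-meeting, takes a union bound over the $m^2$ pairs, writes each pairwise meeting probability as $\sum_w p_t(x_i,w)\Phi(w-y_j)=K_1\int_0^{+\infty}p_{t+r}(x_i,y_j)dr$, and finishes with $p_r(x,y)\le p_r(\textbf{0},\textbf{0})$.

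\textbf{The gap is the constant.} On the bad event you compare the quenched joint expectation $\mathbb{E}_{\mu_p}(H_1H_2)$ with the quenched product $\mathbb{E}_{\mu_p}(H_1)\mathbb{E}_{\mu_p}(H_2)$. For signed $H_k$ both lie in $[-\|H_1\|_\infty\|H_2\|_\infty,\|H_1\|_\infty\|H_2\|_\infty]$, so the coupling only gives the constant $2m^2K_1$. The statement has nothing to absorb that $2$ into.

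Your appeal to a ``standard argument using the exchange of the two systems'' does not fix this. Making the $y$-family primary gives a representation of exactly the same form, with the same pointwise bound on the bad event, and averaging the two cannot beat that bound. The best the coupling yields, after centring $H_1$ at the midpoint of its range (covariance is shift invariant), is $\frac12\mathrm{osc}(H_1)\mathrm{osc}(H_2)\tilde{\mathbb{P}}(E^c)$. When $H_k$ takes the values $\pm\|H_k\|_\infty$, this is still $2\|H_1\|_\infty\|H_2\|_\infty\tilde{\mathbb{P}}(E^c)$.

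What Appendix A.2 actually uses is that its functions are $[0,1]$-valued, so the two quenched quantities lie in $[0,1]$ and differ by at most $1$. The same works verbatim for any one-signed $H_1,H_2$, since both quantities then lie in an interval of length $\|H_1\|_\infty\|H_2\|_\infty$. Every use of the lemma in the paper is of this kind: $u_1u_2$, $(u_1-u_2)^2u_3u_4$, and the related products in the proof of \eqref{equ 3.7 QV variance d=4}. So either prove the lemma for one-signed $H_k$, or accept $2m^2K_1$ in general, which changes nothing downstream. Obtaining $m^2K_1$ for arbitrary signed $H_k$ would need a finer analysis of the bad event than either your sketch or the paper's ``easy extension'' supplies.

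\textbf{Minor point on time normalisation.} The obstacle you describe is illusory, and your intermediate formula is wrong even though your final one is right. During the first $t$ units of backward time the $y_j$-walk is dormant and only the $x_i$-walk moves. So when the $y_j$-walk activates, the $x_i$-walk sits at $w$ with probability $p_t(x_i,w)$, not $p_{2t}$.

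From then on, the probability that the two independent walks ever meet is exactly $\Phi(w-y_j)$. The difference walk is a time change of a simple random walk, and hitting probabilities do not depend on speed. Chapman--Kolmogorov with $\Phi(x)=K_1\int_0^{+\infty}p_r(\textbf{0},x)dr$ then gives $K_1\int_0^{+\infty}p_{t+r}(x_i,y_j)dr$, with no rate factor anywhere.

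The $p_{2t}$ version would produce $K_1\int_{2t}^{+\infty}p_r(x_i,y_j)dr$, which underestimates the meeting probability and so could not serve as an upper bound. The ``half-time parametrisation'' remark should simply be dropped.
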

Note that \eqref{equ 3.5} is a special case of Lemma \ref{lemma 3.7 time correlation decay}, where $m=2$ and $H_1=H_2=H$ such that $H(u)=u_1u_2$ for any
$u=(u_1, u_2)\in \{0, 1\}^2$. The verification of \eqref{equ 3.5} given in Appendix \ref{Appendix A.2} can be easily extended to general $m$ and $H_1, H_2: \{0, 1\}^{\mathbb{Z}^m}\rightarrow \mathbb{R}$, so we omit a recall of the detailed proof of Lemma \ref{lemma 3.7 time correlation decay} in this paper.

Now we proof \eqref{equ 3.7 QV variance d=4}.
\proof[Proof of \eqref{equ 3.7 QV variance d=4}]
According to the invariance of $\nu_p$, the bilinear property of the covariance, Equations \eqref{equ 3.8}, \eqref{equ 3.14} and \eqref{equ 3.16}, to complete this proof we only need to verify that
\begin{equation*}
\lim_{t\rightarrow+\infty}\sup
{\rm Cov}_{\nu_p}\left((\eta_t(x_1)-\eta_t(x_2))^2, (\eta_0(x_3)-\eta_0(x_4))^2\right)=0,
\end{equation*}
\begin{equation*}
\lim_{t\rightarrow+\infty}\sup
{\rm Cov}_{\nu_p}\left((\eta_t(x_1)-\eta_t(x_2))^2, (\eta_0(x_3)-\eta_0(x_4))^2\eta_0(x_5)\right)=0,
\end{equation*}
\begin{equation*}
\lim_{t\rightarrow+\infty}\sup
{\rm Cov}_{\nu_p}\left((\eta_t(x_1)-\eta_t(x_2))^2, (\eta_0(x_3)-\eta_0(x_4))^2\eta_0(x_5)\eta_0(x_6)\right)=0,
\end{equation*}
\begin{equation*}
\lim_{t\rightarrow+\infty}\sup
{\rm Cov}_{\nu_p}\left((\eta_t(x_1)-\eta_t(x_2))^2\eta_t(x_5), (\eta_0(x_3)-\eta_0(x_4))^2\right)=0,
\end{equation*}
\begin{equation*}
\lim_{t\rightarrow+\infty}\sup
{\rm Cov}_{\nu_p}\left((\eta_t(x_1)-\eta_t(x_2))^2\eta_t(x_5), (\eta_0(x_3)-\eta_0(x_4))^2\eta_0(x_6)\right)=0,
\end{equation*}
\begin{equation*}
\lim_{t\rightarrow+\infty}\sup
{\rm Cov}_{\nu_p}\left((\eta_t(x_1)-\eta_t(x_2))^2\eta_t(x_5), (\eta_0(x_3)-\eta_0(x_4))^2\eta_0(x_6)\eta_0(x_7)\right)=0,
\end{equation*}
\begin{equation*}
\lim_{t\rightarrow+\infty}\sup
{\rm Cov}_{\nu_p}\left((\eta_t(x_1)-\eta_t(x_2))^2\eta_t(x_5)\eta_t(x_6), (\eta_0(x_3)-\eta_0(x_4))^2\right)=0,
\end{equation*}
\begin{equation*}
\lim_{t\rightarrow+\infty}\sup
{\rm Cov}_{\nu_p}\left((\eta_t(x_1)-\eta_t(x_2))^2\eta_t(x_5)\eta_t(x_6), (\eta_0(x_3)-\eta_0(x_4))^2\eta_0(x_7)\right)=0,
\end{equation*}
and
\begin{equation}\label{equ 3.20}
\lim_{t\rightarrow+\infty}\sup
{\rm Cov}_{\nu_p}\left((\eta_t(x_1)-\eta_t(x_2))^2\eta_t(x_5)\eta_t(x_6), (\eta_0(x_3)-\eta_0(x_4))^2\eta_0(x_7)\eta_0(x_8)\right)=0,
\end{equation}
where the supremums in the above nine equations are all over $(x_1, x_2, x_3, x_4, x_5, x_6, x_7, x_8)\in (\mathbb{Z}^4)^8$ such that
$x_1\sim x_2$ and $x_3\sim x_4$.

The above nine equations are all special cases of Lemma \ref{lemma 3.7 time correlation decay} according to the fact that
\[
\lim_{t\rightarrow+\infty}\int_t^{+\infty}p_{r, 4}(\textbf{0}, \textbf{0})dr=0.
\]
For instance, applying Lemma \ref{lemma 3.7 time correlation decay} for $m=4$ and $H_1=H_2=H$ such that
\[
H(u)=(u_1-u_2)^2u_3u_4
\]
for any $u=(u_1, u_2, u_3, u_4)\in \{0, 1\}^4$, we obtain \eqref{equ 3.20}. Since \eqref{equ 3.20} and other eight analogues hold, the proof is complete.
\qed

Note that, since \eqref{equ 3.6 QV mean d=4} and \eqref{equ 3.7 QV variance d=4} hold, the proof of Lemma \ref{lemma 3.2 martingale clt d=4} is complete.

\subsection{Tightness}\label{subsection 3.2}
In this subsection we complete the proof of Theorem \ref{theorem 2.1 main theorem d=4}. By decomposition \eqref{equ 3.2 martingale decomposition d=4} and Lemmas \ref{lemma 3.1}, \ref{lemma 3.2 martingale clt d=4}, we only need to show that
\[
\left\{\frac{\xi_{tN}^{\textbf{0}, x}-tN\alpha_{\textbf{0}, x}(p)}{\sqrt{N\log N}}:~0\leq t\leq T\right\}_{N\geq 1}
\]
are tight under the uniform topology of $C[0, T]$. We will show that
\[
\mathbb{E}_{\nu_p}\left(\left(\frac{\xi_{tN}^{\textbf{0}, x}-tN\alpha_{\textbf{0}, x}(p)}{\sqrt{N\log N}}-\frac{\xi_{sN}^{\textbf{0}, x}-sN\alpha_{\textbf{0}, x}(p)}{\sqrt{N\log N}}\right)^4\right)=O(1)(t-s)^2
\]
for any $0\leq s<t$ and then the aforesaid tightness follows from Corollary 14.9 of \cite{Kallenberg1997}.

According to the definition of $\xi_t$ and the invariance of $\nu_p$,
\begin{align*}
&\mathbb{E}_{\nu_p}\left(\left(\frac{\xi_{tN}^{\textbf{0}, x}-tN\alpha_{\textbf{0}, x}(p)}{\sqrt{N\log N}}-\frac{\xi_{sN}^{\textbf{0}, x}-sN\alpha_{\textbf{0}, x}(p)}{\sqrt{N\log N}}\right)^4\right)\\
&=\frac{24}{\left(N\log N\right)^2}\int_{0}^{(t-s)N}ds_1\int_{0}^{s_1}ds_2\int_{0}^{s_2}ds_3\int_0^{s_3}\mathbb{E}_{\nu_p}\left(\prod_{i=1}^4F(\eta_{s_i})\right)ds_4,
\end{align*}
where $F(\eta)=\eta(\textbf{0})\eta(x)-\alpha_{\textbf{0}, x}(p)$. According to the definition of $\nu_p$ and Proposition \ref{Proposition A.1 dual}, for $s_4< s_3< s_2< s_1$,
\[
\mathbb{E}_{\nu_p}\left(\prod_{i=1}^4F(\eta_{s_i})\right)=
\lim_{r\rightarrow+\infty}\mathbb{E}_{\mu_p}\left(\prod_{i=1}^4\left(F(\eta_{s_i+r})
-\mathbb{E}_{\mu_p}F(\eta_{s_i+r})\right)\right)
\]
and
\begin{align*}
&\mathbb{E}_{\mu_p}\left(\prod_{i=1}^4\left(F(\eta_{s_i+r})-\mathbb{E}_{\mu_p}F(\eta_{s_i+r})\right)\right)\\
&=\tilde{\mathbb{E}}
\left(\mathbb{E}_{\mu_p}\left(\prod_{i=1}^4\left(\eta_0(\mathcal{X}_{s_1+r, 2i-1}^{\textbf{0}, s_1-s_i})\eta_{0}(\mathcal{X}^{x, s_1-s_i}_{s_1+r, 2i})-\alpha_{\textbf{0}, x}(p)-\mathbb{E}_{\mu_p}F(\eta_{s_i+r})\right)\right)\right).
\end{align*}
To bound the right-hand side of the above equation, we introduce eight independent random walks. In detail, for $s_4<s_3<s_2<s_1$ and $i\in \{1, 2, 3, 4\}$, we denote by $\{\hat{X}_{t, 2i-1}\}_{t\geq 0}$ the random walk such that $\hat{X}_{t, 2i-1}=\textbf{0}$ for $0\leq t\leq s_1-s_i$ and $\{\hat{X}_{s_1-s_i+t, 2i-1}\}_{t\geq 0}$ is a copy of $\{X_t\}_{t\geq 0}$ with generator given by \eqref{equ 2.1 generator of Random Walk} starting from $\textbf{0}$. We further denote by $\{\hat{X}_{t, 2i}\}_{t\geq 0}$ the random walk such that $\hat{X}_{t, 2i}=x$ for $0\leq t\leq s_1-s_i$ and $\{\hat{X}_{s_1-s_i+t, 2i}\}_{t\geq 0}$ is a copy of $\{X_t\}_{t\geq 0}$ starting from $x$. We assume that the above eight random walks are independent.

Later we will couple the coalescing random walk
\[
\left\{\left(\mathcal{X}_{t, 1}^{\textbf{0}, 0}, \mathcal{X}_{t, 2}^{x, 0}, \mathcal{X}_{t, 3}^{\textbf{0}, s_1-s_2},
\mathcal{X}_{t, 4}^{x, s_1-s_2}, \mathcal{X}_{t, 5}^{\textbf{0}, s_1-s_3}, \mathcal{X}_{t, 6}^{x, s_1-s_3}, \mathcal{X}_{t, 7}^{\textbf{0}, s_1-s_4}, \mathcal{X}_{t, 8}^{x, s_1-s_4}\right)\right\}_{t\geq 0}
\]
and $\left\{\hat{X}_{t, i}:~t\geq 0\right\}_{1\leq i\leq 8}$ in the same probability space. Now we give some upper bounds of collision probabilities between $\left\{\hat{X}_{t, i}:~t\geq 0\right\}_{1\leq i\leq 8}$ as a preliminary step for the tightness verification.

For any different $i, j\in \{1, 2, 3, 4\}$, $k\in \{2i-1, 2i\}$ and $l\in \{2j-1, 2j\}$, we denote by $\hat{A}_{k,l}$ the event that
$\hat{X}_{t, k}=\hat{X}_{t, l}$ for some
\[
t\geq \max\{s_1-s_i, s_1-s_j\}.
\]
For any different $i, j, n\in \{1, 2, 3, 4\}$, $k\in \{2i-1, 2i\}$, $l\in \{2j-1, 2j\}$ and $m\in \{2n-1, 2n\}$, we denote by $\hat{H}_{k, l, m}$ the event
$\hat{A}_{k, l}\bigcap \hat{A}_{k, n}$. For any different $i, j\in \{1, 2, 3, 4\}$, $\{\hat{i}, \hat{j}\}=\{1, 2, 3, 4\}\setminus\{i, j\}$ and $k\in \{2i-1, 2i\}$, $l\in \{2i-1, 2i, 2j-1, 2j\}\setminus\{k\}$, $m\in \{2\hat{i}-1, 2\hat{i}\}$ and $\tilde{m}\in \{2\hat{j}-1, 2\hat{j}\}$, we denote by $\hat{J}_{k, l, m, \tilde{m}}$ the event
$\hat{A}_{k, m}\bigcap\hat{A}_{l, \tilde{m}}$.
We write $\hat{A}_{k, l}$ as $\hat{A}_{k, l}^{s_1, s_2, s_3, s_4}$ and so does other notations when we need to emphasize the dependence of $\hat{A}_{k, l}$ on $s_1, s_2, s_3, s_4$. The following two lemmas are crucial for the verification of the tightness of the quadratic additive functionals.
\begin{lemma}\label{lemma 3.8}
There exists $K_4<+\infty$ independent of $t, k, l, m, N$ such that
\[
\int_0^{tN}ds_1\int_0^{s_1}ds_2\int_0^{s_2}ds_3\int_0^{s_3}\tilde{\mathbb{P}}\left(\hat{H}_{k, l, m}^{s_1, s_2, s_3, s_4}\right)ds_4
\leq K_4(N\log N)^2t^2
\]
for any $0\leq t\leq 1$, $N\geq 5$, different $i, j, n\in \{1, 2, 3, 4\}$ and $k\in \{2i-1, 2i\}, l\in \{2j-1, 2j\}, m\in \{2n-1, 2n\}$.
\end{lemma}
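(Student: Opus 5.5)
The plan is to reduce the estimate to a product of two one-gap decay factors for independent random walks in $\mathbb{Z}^4$, and then integrate over the ordered times $s_4<s_3<s_2<s_1$. Write the gaps as $g_1=s_1-s_2$, $g_2=s_2-s_3$, $g_3=s_3-s_4$. The time gap between groups $i$ and $j$ (with $i<j$) is $s_i-s_j$, which is a sum of the consecutive gaps $g_i,\ldots,g_{j-1}$. The target bound is
\[
\tilde{\mathbb{P}}\left(\hat{H}_{k,l,m}^{s_1,s_2,s_3,s_4}\right)\leq K\,(1+g_a)^{-1}(1+g_b)^{-1}
\]
for two \emph{distinct} gap indices $a\neq b$. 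Here $\hat H_{k,l,m}$ is read as $\hat A_{k,l}\cap\hat A_{k,m}$.

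\textbf{Step 1 (single collision).} For two independent walks started from points at bounded distance (the points $\textbf{0}$ or $x$), one frozen for an extra time $\Delta$, I bound the meeting probability as follows. At the time both are moving, the difference is the earlier walk's displacement $Z$ after time $\Delta$, plus a bounded shift. The difference then evolves as a random walk at doubled rate. By the Green function estimate $G(y)\asymp (1+\|y\|)^{-2}$ in $d=4$, the meeting probability is at most $C\,\hat{\mathbb{E}}\,\min\{1,\|Z\|^{-2}\}\leq C(1+\Delta)^{-1}$. Equivalently, it is at most $C\int_\Delta^\infty p_{r}(\textbf{0},\textbf{0})\,dr$, which is the same quantity appearing in Lemma \ref{lemma 3.7 time correlation decay}.

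\textbf{Step 2 (joint collision).} I split $\hat A_{k,l}\cap\hat A_{k,m}$ according to which collision happens first, and apply the strong Markov property at the first collision time $\sigma$. Take the case where $k$ meets $l$ first. At time $\sigma$ the walk $k$ sits somewhere, and $m$ has moved independently. The conditional probability that $k$ subsequently meets $m$ is controlled by Step 1, applied to the pair $(k,m)$. The point is that the relative displacement of $k$ and $m$ at time $\sigma$ still contains an independent Gaussian-scale piece of size at least the gap between groups $i$ and $n$. This holds because $m$'s own walk, together with its delay relative to $k$, is independent of $\sigma$, so conditioning on $\sigma$ does not destroy the spread. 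This yields
\[
\tilde{\mathbb{P}}\left(\hat H_{k,l,m}\right)\leq C\,(1+|s_i-s_j|)^{-1}(1+|s_i-s_n|)^{-1}.
\]
A cleaner way to make this rigorous, which I would try first, is to bound each probability by an expected pair of local times:
\[
\tilde{\mathbb{P}}\left(\hat A_{k,l}\cap\hat A_{k,m}\right)\leq C\,\tilde{\mathbb{E}}\int\!\!\int 1_{\{\hat X_{u,k}=\hat X_{u,l}\}}1_{\{\hat X_{v,k}=\hat X_{v,m}\}}\,du\,dv.
\]
This uses the fact that after a meeting two continuous-time walks stay together for an exponential time, conditioned on the past. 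The right-hand side is then evaluated explicitly with heat kernels and the bound $p_t(\textbf{0},z)\leq Ct^{-2}$. I expect this step, with its careful handling of the delayed starts and of the two orderings $u<v$ and $v<u$, to be the main obstacle.

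\textbf{Step 3 (distinct gaps and integration).} Since $i,j,n$ are distinct, the index intervals $[\min(i,j),\max(i,j)]$ and $[\min(i,n),\max(i,n)]$ are different. I can therefore choose a gap $g_a$ contained in the first interval and a gap $g_b\neq g_a$ contained in the second. If one interval contains the other, take $g_a$ in the smaller one and $g_b$ in the larger one outside it. Using $(1+g_a+\cdots)^{-1}\leq(1+g_a)^{-1}$, Step 2 gives the target bound. Changing variables from $(s_1,s_2,s_3,s_4)$ to $(s_1,g_1,g_2,g_3)$, each variable ranges over at most $[0,tN]$. The integral is then at most
\[
K\cdot tN\cdot tN\cdot\left(\int_0^{tN}\frac{dg}{1+g}\right)^2\leq K\,(tN)^2\log^2(1+N).
\]
Since $t\leq1$ and $N\geq5$, we have $\log(1+N)\leq 2\log N$, so this is at most $K_4(N\log N)^2t^2$. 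All constants depend only on the random walk, and hence not on $t,k,l,m,N$.
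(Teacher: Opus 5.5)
Steps 1 and 3 are fine, but the inequality you assert in Step 2 is false: $\tilde{\mathbb{P}}(\hat H_{k,l,m})\le C(1+|s_i-s_j|)^{-1}(1+|s_i-s_n|)^{-1}$ fails in general, so neither of your two routes can produce it.

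\textbf{Where the justification fails.} Take the ordering where $k$ meets $l$ first, and suppose group $n$ is activated after group $i$ ($s_n<s_i$). At the first collision time $\sigma$ the walk $k$ sits exactly on $l$, so its head start gives no spread independent of $\sigma$. The only independent part of $\hat X_{\sigma,k}-\hat X_{\sigma,m}$ is then the displacement of $m$, whose variance $(\sigma-(s_1-s_n))^+$ can be tiny. If $\sigma<s_1-s_n$, the fresh motion of $k$ before $m$ wakes up lasts only $s_1-s_n-\sigma$.

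\textbf{A counterexample.} Take $(k,l,m)=(1,3,5)$, with $g_1=s_1-s_2$ large and $g_2=s_2-s_3\ll g_1$. For each geometric block of values $r\in[g_2,g_1]$, the following holds:
\begin{itemize}
\item walks $1$ and $3$ first meet at a time of order $r$ after walk $3$ is activated, with probability of order $rg_1^{-2}$;
\item the meeting point is typically within $O(\sqrt r)$ of walk $5$;
\item afterwards walk $1$ hits walk $5$ with probability of order $r^{-1}$.
\end{itemize}
Summing over blocks gives $\tilde{\mathbb{P}}(\hat H_{1,3,5})\asymp g_1^{-2}\log(g_1/(1+g_2))$. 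Your bound is only of order $g_1^{-2}$.

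\textbf{What Step 3 actually needs.} Let $D_1,D_2$ be the gaps between consecutive activation times of the groups $i,j,n$. Your choice of $g_a\neq g_b$ always picks one gap from each of these two segments. So Step 3 only needs $C(1+D_1)^{-1}(1+D_2)^{-1}$. That weaker bound is consistent with the example and should hold, but it must be proved differently.

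One way is to keep track of where the first meeting happens: typically at distance of order $\sqrt{\sigma-\max(s_1-s_i,s_1-s_j)}$ from the start sites. In your local-time route, this means estimating $\sum_z p_a(\cdot,z)p_b(\cdot,z)p_c(\cdot,z)$ by a quantity of order $((1+a)(1+b)+(1+b)(1+c)+(1+c)(1+a))^{-2}$. You cannot instead bound the conditional hitting probability uniformly in the meeting point. If that spatial information is discarded, the case $\sigma<s_1-s_n$ leaves a factor $(1+s_1-s_n-\sigma)^{-1}$. In the example, its average over $\sigma$ is of order $g^{-2}\log g$ when $g_1\approx g_2\approx g$, again too large pointwise.

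\textbf{How the paper avoids this.} It never uses a pointwise product bound.
\begin{itemize}
\item It conditions on $(\hat\varpi_1,\hat X_{\hat\varpi_1,1})$ and bounds the second collision by $K_1\int_0^{+\infty}p_{\Delta+\theta}(\textbf{0},\textbf{0})\,d\theta$, where $\Delta$ depends on $\hat\varpi_1$ and $s_3$.
\item It integrates over $s_3$ first, which gives $\hat G(N)=O(\log N)$ uniformly in $\hat\varpi_1$.
\item Only then does it integrate $\tilde{\mathbb{P}}(\hat\varpi_1<+\infty)$ over $(s_1,s_2)$, obtaining $\int_0^{tN}\hat G(s_1)\,ds_1=O(tN\log N)$.
\end{itemize}
Integrating in that order, instead of seeking a pointwise bound, would repair your argument.
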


\begin{lemma}\label{lemma 3.9}
There exists $K_5<+\infty$ independent of $t, k, l, m, \tilde{m}, N$ such that
\[
\int_0^{tN}ds_1\int_0^{s_1}ds_2\int_0^{s_2}ds_3\int_0^{s_3}\tilde{\mathbb{P}}\left(\hat{J}_{k, l, m, \tilde{m}}^{s_1, s_2, s_3, s_4}\right)ds_4
\leq K_5(N\log N)^2t^2
\]
for any $0\leq t\leq 1$, $N\geq 5$, different $i, j\in \{1, 2, 3, 4\}$ and $k\in \{2i-1, 2i\}$, $l\in \{2i-1, 2i, 2j-1, 2j\}\setminus\{k\}$, $m\in \{2\hat{i}-1, 2\hat{i}\}$, $\tilde{m}\in
\{2\hat{j}-1, 2\hat{j}\}$, where $\{\hat{i}, \hat{j}\}=\{1, 2, 3, 4\}\setminus\{i, j\}$.
\end{lemma}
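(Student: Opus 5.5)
Lemma \ref{lemma 3.9} concerns $\hat J_{k,l,m,\tilde m}=\hat A_{k,m}\cap \hat A_{l,\tilde m}$: walk $k$ (from block $i$) meets walk $m$ (from block $\hat i$), and walk $l$ (from block $i$ or $j$) meets walk $\tilde m$ (from block $\hat j$). The plan is to treat the two meetings as two pairwise collisions that are almost independent and to bound each one in terms of the time gap between the blocks involved. For independent walks in $\mathbb{Z}^4$, the local central limit theorem $p_{t,4}(\textbf{0},z)\le O(1)(1+t)^{-2}$ and the Markov property give the following. Let two walks start at time $\max\{s_1-s_a,s_1-s_b\}$, one at a fixed point and the other after running freely for time $|s_a-s_b|$. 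Then the probability that they ever meet is at most
\[
O(1)\int_{|s_a-s_b|}^{+\infty}(1+r)^{-2}dr=O(1)\left(1+|s_a-s_b|\right)^{-1}.
\]
This is the basic one-collision estimate, $\tilde{\mathbb{P}}(\hat A_{k,l})\le K(1+|s_a-s_b|)^{-1}$ for $k$ in block $a$ and $l$ in block $b$.

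\textbf{Step 1 (decoupling).} First I would consider the case where the four walks $k,l,m,\tilde m$ are distinct and come from the blocks $i,\hat i$ and $\{i\text{ or }j\},\hat j$. If $l$ belongs to block $j$, the two pairs $\{k,m\}$ and $\{l,\tilde m\}$ involve four independent walks, so the event factorises:
\[
\tilde{\mathbb{P}}(\hat J)\le K^2\left(1+|s_i-s_{\hat i}|\right)^{-1}\left(1+|s_j-s_{\hat j}|\right)^{-1}.
\]
If $l$ belongs to block $i$ (so $l$ is the other walk of block $i$), the walks $k$ and $l$ start at the fixed points $\textbf{0}$ and $x$ at the same time. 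They are independent of each other and of $m$ and $\tilde m$, so the events are again independent and the bound becomes
\[
K^2\left(1+|s_i-s_{\hat i}|\right)^{-1}\left(1+|s_i-s_{\hat j}|\right)^{-1}.
\]
Note that in the definition of $\hat J$ the blocks $i,\hat i,\hat j$ are always distinct, and in the first case $j$ is distinct from the other three.

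\textbf{Step 2 (integration over the ordered simplex).} I then integrate these bounds over $0\le s_4\le s_3\le s_2\le s_1\le tN$. Take any pairing of the indices $\{1,2,3,4\}$ into two pairs $\{a,b\},\{c,d\}$ (possibly sharing an index, as in the second case). The bound is a product of two kernels $g(s_a-s_b)g(s_c-s_d)$ with $g(u)=(1+|u|)^{-1}$. Changing variables to the gaps, each kernel integrates to at most $\int_0^{tN}g\le O(1)\log(2+tN)$, and the remaining two free variables contribute a factor $(tN)^2$. This yields a total of
\[
O(1)(tN)^2\log^2(2+tN)\le O(1)(N\log N)^2t^2
\]
for $0\le t\le 1$ and $N\ge5$, using $\log(2+tN)\le O(1)\log N$.

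In the shared-index case, for example pairs $\{i,\hat i\}$ and $\{i,\hat j\}$, I would fix $s_i$ and integrate $s_{\hat i}$ and $s_{\hat j}$ independently. Each gives a logarithm, and the fourth variable together with $s_i$ gives $(tN)^2$. The ordering constraints only shrink the domain, so they cause no trouble.

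\textbf{Main obstacle.} The main difficulty is rigorously justifying the one-collision bound when walks in the same block start at the same time but at different points, and checking that no case produces a single kernel integrated against only one free variable. That situation would give a worse power of $N$ relative to $t$. I expect the case where $l$ lies in block $i$ needs the most care, to ensure the two collision events genuinely involve independent walks and that the resulting integral still has two free variables.
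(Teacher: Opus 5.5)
Your proposal is correct and is essentially the paper's proof. The four walks $k,l,m,\tilde{m}$ are distinct independent walks, so $\tilde{\mathbb{P}}(\hat{J}_{k,l,m,\tilde{m}})$ factorizes into two one-collision probabilities. By the Markov property and $\Phi(w)=K_1\int_0^{+\infty}p_r(\textbf{0},w)dr$, each is at most $K_1\int_0^{+\infty}p_{\Delta+r}(\textbf{0},\textbf{0})dr=O((1+\Delta)^{-1})$, where $\Delta$ is the time gap between the two blocks. Integrating over the simplex then gives $K_1^2(\int_0^{tN}\hat{G}(s)ds)^2$ when $l$ lies in block $j$, and $K_1^2\,tN\int_0^{tN}\hat{G}(s)^2ds$ when $l$ lies in block $i$, which are exactly your two cases.

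The obstacle you flag is not a real one: $p_t(w_1,w_2)\le p_t(\textbf{0},\textbf{0})$ makes the collision bound uniform in the (possibly different) starting points $\textbf{0}$ and $x$.
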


\proof[Proof of Lemma \ref{lemma 3.8}]
We only deal with the case where $(k, l, m)=(1, 3, 5)$ since other cases follow from similar arguments. Let
\[
\hat{\varpi}_1=\inf\{t:~t\geq s_1-s_2, \hat{X}_{t, 1}=\hat{X}_{t, 3}\}
\]
and
\[
\hat{\varpi}_2=\inf\{t:~t\geq s_1-s_3, \hat{X}_{t, 1}=\hat{X}_{t, 5}\},
\]
then
\begin{align*}
&\tilde{\mathbb{P}}\left(\hat{H}_{1, 3, 5}^{s_1, s_2, s_3, s_4}\right)\\
&=\tilde{\mathbb{P}}\left(\hat{\varpi}_1<s_1-s_3<\hat{\varpi}_2<+\infty, s_1-s_3<\hat{\varpi}_1<\hat{\varpi}_2<+\infty, \hat{\varpi}_2<\hat{\varpi}_1<+\infty\right).
\end{align*}
Let $\sigma(dr, dz)$ be the joint distribution of $\hat{\varpi}_1-(s_1-s_2)$ and $\hat{X}_{\hat{\varpi}_1, 1}$. According to the Markov property,
\begin{align*}
&\tilde{\mathbb{P}}\left(\hat{\varpi}_1<s_1-s_3<\hat{\varpi}_2<+\infty\right)\\
&=\sum_{z}\int_{0}^{s_2-s_3}\sum_{w}p_{s_2-s_3-r}(\textbf{0}, w)\Phi(w)\sigma(dr, dz)\\
&=K_1\sum_{z}\int_{0}^{s_2-s_3}\left(\int_0^{+\infty}p_{s_2-s_3-r+\theta}(\textbf{0}, \textbf{0})d\theta\right)\sigma(dr, dz),
\end{align*}
where $K_1=\int_0^{+\infty}p_s(\textbf{0}, \textbf{0})ds$. As a result,
\begin{align*}
&\int_0^{tN}ds_1\int_0^{s_1}ds_2\int_0^{s_2}ds_3\int_0^{s_3}\tilde{\mathbb{P}}\left(\hat{\varpi}_1<s_1-s_3<\hat{\varpi}_2<+\infty\right)ds_4\\
&\leq tN \int_0^{tN}ds_1\int_0^{s_1}ds_2\int_0^{s_2}\tilde{\mathbb{P}}\left(\hat{\varpi}_1<s_1-s_3<\hat{\varpi}_2<+\infty\right)ds_3\\
&\leq tNK_1\sum_{z}\int_0^{tN}ds_1\int_0^{s_1}ds_2\int_0^{s_2}\left(\int_{0}^{s_2-r}ds_3\int_0^{+\infty}p_{s_2-s_3-r+\theta}(\textbf{0}, \textbf{0})d\theta\right)\sigma(dr, dz)\\
&=tNK_1\sum_{z}\int_0^{tN}ds_1\int_0^{s_1}ds_2\int_0^{s_2}\left(\int_{0}^{s_2-r}ds_3\int_0^{+\infty}p_{s_3+\theta}(\textbf{0}, \textbf{0})d\theta\right)\sigma(dr, dz)\\
&\leq tNK_1\sum_{z}\int_0^{tN}ds_1\int_0^{s_1}\hat{G}(s_2)\left(\int_0^{s_2}1\sigma(dr, dz)\right)ds_2,
\end{align*}
where
\[
\hat{G}(s_2)=\left(\int_{0}^{s_2}ds_3\int_0^{+\infty}p_{s_3+\theta}(\textbf{0}, \textbf{0})d\theta\right).
\]
As a result,
\begin{align*}
&\int_0^{tN}ds_1\int_0^{s_1}ds_2\int_0^{s_2}ds_3\int_0^{s_3}\tilde{\mathbb{P}}\left(\hat{\varpi}_1<s_1-s_3<\hat{\varpi}_2<+\infty\right)ds_4\\
&\leq tNK_1\hat{G}(N)\sum_{z}\int_0^{tN}ds_1\int_0^{s_1}\left(\int_0^{s_2}1\sigma(dr, dz)\right)ds_2\\
&=tNK_1\hat{G}(N)\int_0^{tN}ds_1\int_0^{s_1}\tilde{\mathbb{P}}\left(\hat{\varpi}_1<s_1\right)ds_2\\
&\leq tNK_1\hat{G}(N)\int_0^{tN}ds_1\int_0^{s_1}\tilde{\mathbb{P}}\left(\hat{\varpi}_1<+\infty\right)ds_2\\
&=tNK_1\hat{G}(N)\int_0^{tN}ds_1\int_0^{s_1}\sum_{w}p_{s_1-s_2}(\textbf{0}, w)\Phi(w)ds_2\\
&=tNK_1^2\hat{G}(N)\int_0^{tN}ds_1\int_0^{s_1}\left(\int_0^{+\infty}p_{s_1-s_2+r}\left(\textbf{0}, \textbf{0}\right)dr\right)ds_2\\
&=tNK_1^2\hat{G}(N)\int_0^{tN}\hat{G}(s_1)ds_1.
\end{align*}
According to similar arguments, the above inequality still holds when $\{\hat{\varpi}_1<s_1-s_3<\hat{\varpi}_2<+\infty\}$ is replaced by $\{s_1-s_3<\hat{\varpi}_1<\hat{\varpi}_2<+\infty\}$ or $\{\hat{\varpi}_2<\hat{\varpi}_1<+\infty\}$.
According to the local central limit theorem of the simple random walk on $\mathbb{Z}^4$, $p_{t, 4}(\textbf{0}, \textbf{0})=O(1)t^{-2}$ as $t\rightarrow +\infty$. Consequently, $\hat{G}(s)=O(1)\log s$ as $s\rightarrow+\infty$ and then the existence of $K_4$ follows from the fact that $\int_1^s (\log r) dr=O(1)s\log s$ as $s\rightarrow+\infty$.
\qed

\proof[Proof of Lemma \ref{lemma 3.9}]
We only deal with the cases where $(k, l, m, \tilde{m})=(1, 3, 5, 7)$ or $(1, 2, 5, 7)$ since other cases follow from similar arguments. Let
\[
\tilde{\varpi}_1=\inf\{t:~t\geq s_1-s_3, \hat{X}_{t, 1}=\hat{X}_{t, 5}\}
\]
and
\[
\tilde{\varpi}_2=\inf\{t:~t\geq s_1-s_4, \hat{X}_{t, 3}=\hat{X}_{t, 7}\}.
\]
According to the definition of $\hat{J}_{1, 3, 5, 7}$,
\[
\tilde{\mathbb{P}}\left(\hat{J}_{1, 3, 5, 7}^{s_1, s_2, s_3, s_4}\right)
=\tilde{\mathbb{P}}\left(\tilde{\varpi}_1<+\infty\right)\tilde{\mathbb{P}}\left(\tilde{\varpi}_2<+\infty\right).
\]
According to the Markov property,
\begin{align*}
\tilde{\mathbb{P}}\left(\tilde{\varpi}_1<+\infty\right)
=\sum_{w}p_{s_1-s_3}(\textbf{0}, w)\Phi(w)
=K_1\int_0^{+\infty}p_{s_1-s_3+r}(\textbf{0}, \textbf{0})dr.
\end{align*}
Similarly,
\[
\tilde{\mathbb{P}}\left(\tilde{\varpi}_2<+\infty\right)=K_1\int_0^{+\infty}p_{s_2-s_4+r}(\textbf{0}, \textbf{0})dr.
\]
As a result,
\begin{align*}
&\int_0^{tN}ds_1\int_0^{s_1}ds_2\int_0^{s_2}ds_3\int_0^{s_3}\tilde{\mathbb{P}}\left(\hat{J}_{1, 3, 5, 7}^{s_1, s_2, s_3, s_4}\right)ds_4\\
&\leq K_1^2\int_0^{tN}\left(\int_0^{s_1}ds_3\int_0^{+\infty}p_{s_1-s_3+r}(\textbf{0}, \textbf{0})dr\right)ds_1\\
&\text{\quad\quad\quad\quad\quad}\times\int_0^{tN}\left(\int_0^{s_2}ds_4\int_0^{+\infty}p_{s_2-s_4+r}(\textbf{0}, \textbf{0})dr\right)ds_2\\
&=K_1^2\left(\int_0^{tN}\hat{G}(s_1)ds_1\right)^2,
\end{align*}
where $\hat{G}$ is defined as in the proof of Lemma \ref{lemma 3.8}.

According to a similar argument,
\begin{align*}
\int_0^{tN}ds_1\int_0^{s_1}ds_2\int_0^{s_2}ds_3\int_0^{s_3}\tilde{\mathbb{P}}\left(\hat{J}_{1, 2, 5, 7}^{s_1, s_2, s_3, s_4}\right)ds_4
\leq K_1^2tN\int_0^{tN}\left(\hat{G}(s_1)\right)^2ds_1.
\end{align*}
As we have shown in the proof of Lemma \ref{lemma 3.8}, $\hat{G}(s)=O(1)\log s$ as $s\rightarrow+\infty$. Consequently, the existence of $K_5$ follows from the facts that $\int_1^s (\log r) dr=O(1)s\log s$ and $\int_1^s(\log r)^2dr=O(1)s(\log s)^2$ as $s\rightarrow+\infty$.
\qed

Now we couple
\[
\left\{\left(\mathcal{X}_{t, 1}^{\textbf{0}, 0}, \mathcal{X}_{t, 2}^{x, 0}, \mathcal{X}_{t, 3}^{\textbf{0}, s_1-s_2},
\mathcal{X}_{t, 4}^{x, s_1-s_2}, \mathcal{X}_{t, 5}^{\textbf{0}, s_1-s_3}, \mathcal{X}_{t, 6}^{x, s_1-s_3}, \mathcal{X}_{t, 7}^{\textbf{0}, s_1-s_4}, \mathcal{X}_{t, 8}^{x, s_1-s_4}\right)\right\}_{t\geq 0}
\]
and $\left\{\hat{X}_{t, i}:~t\geq 0\right\}_{1\leq i\leq 8}$ in the same probability space. For any different $k, l\in \{1, 2, \ldots, 8\}$, we denote by $a_{k,l}$ the stopping time
\[
\inf\left\{t:~t\geq s_1-s_{\lceil\frac{k}{2}\rceil}, t\geq s_1-s_{\lceil\frac{l}{2}\rceil}, \hat{X}_{t, k}=\hat{X}_{t, l}\right\}.
\]
We denote by $\Omega_0$ the set $\{1, 2, \ldots, 8\}$. Let
\[
\tilde{a}_1=\inf\{a_{k, l}:~1\leq k<l\leq 8\}.
\]
If $\tilde{a}_1=+\infty$, we define $\mathcal{X}_{t, j}^{y_j, s_1-s_{\lceil\frac{j}{2}\rceil}}=\hat{X}_{t, j}$ for any $t\geq 0$ and $1\leq j\leq 8$, where
$y_j=\textbf{0}$ when $j\in \{1, 3, 5, 7\}$ and $y_j=x$ when $j\in \{2, 4, 6, 8\}$. If $\tilde{a}_1<+\infty$, there exist $1\leq k_1<l_1\leq 8$ such that $\tilde{a}_1=a_{k_1, l_1}$. We let $\Omega_1=\Omega_0\setminus\{l_1\}$ and
\[
\tilde{a}_2=\inf\{a_{k, l}:~k\in \Omega_1, l\in \Omega_1, k\neq l\}.
\]
If $\tilde{a}_2=+\infty$, we define $\mathcal{X}_{t, j}^{y_j, s_1-s_{\lceil\frac{j}{2}\rceil}}=\hat{X}_{t, j}$ for $0\leq t<\tilde{a}_1, j\in \Omega_0$ and $\mathcal{X}_{t, j}^{y_j, s_1-s_{\lceil\frac{j}{2}\rceil}}=\hat{X}_{t, j}$ for $t\geq \tilde{a}_1, j\in \Omega_1$ and
\[
\mathcal{X}_{t, l_1}^{y_{l_1}, s_1-s_{\lceil\frac{l_1}{2}\rceil}}=\mathcal{X}_{t, k_1}^{y_{k_1}, s_1-s_{\lceil\frac{k_1}{2}\rceil}}
\]
for any $t\geq \tilde{a}_1$. If $\tilde{a}_2<+\infty$, there exist $k_2, l_2\in \Omega_1$ such that $k_2<l_2$ and $\tilde{a}_2=a_{k_2, l_2}$. We denote by $\Omega_2$ the set $\Omega_1\setminus\{l_2\}$ and let
\[
\tilde{a}_3=\inf\{a_{k, l}:~k\in \Omega_2, l\in \Omega_2, k\neq l\}.
\]
If $\tilde{a}_3=+\infty$, we define
\[
\mathcal{X}_{t, j}^{y_j, s_1-s_{\lceil\frac{j}{2}\rceil}}=\hat{X}_{t, j} \text{~for~} 0\leq t<\tilde{a}_1, j\in \Omega_0
\]
and
\[
\mathcal{X}_{t, j}^{y_j, s_1-s_{\lceil\frac{j}{2}\rceil}}=\hat{X}_{t, j} \text{~for~} \tilde{a}_1\leq t<\tilde{a}_2, j\in \Omega_1
\]
and
\[
\mathcal{X}_{t, j}^{y_j, s_1-s_{\lceil\frac{j}{2}\rceil}}=\hat{X}_{t, j} \text{~for~} t>\tilde{a}_2, j\in \Omega_2
\]
and
\[
\mathcal{X}_{t, l_i}^{y_{l_i}, s_1-s_{\lceil\frac{l_i}{2}\rceil}}=\mathcal{X}_{t, k_i}^{y_{k_i}, s_1-s_{\lceil\frac{k_i}{2}\rceil}}
\]
for $t\geq \tilde{a}_i$, where $i=2, 1$.

By induction,  if $\tilde{a}_1<\tilde{a}_2<\ldots<\tilde{a}_n<+\infty$, there exist $k_n, l_n\in \Omega_{n-1}$ such that $k_n<l_n$ and $\tilde{a}_n=a_{k_n, l_n}$. We let $\Omega_n=\Omega_{n-1}\setminus\{l_n\}$ and let
\[
\tilde{a}_{n+1}=\inf\{a_{k, l}:~k\in \Omega_n, l\in \Omega_n, k\neq l\}.
\]
If $\tilde{a}_{n+1}=+\infty$, for $0\leq i\leq n$, we define $\mathcal{X}_{t, j}^{y_j, s_1-s_{\lceil\frac{j}{2}\rceil}}=\hat{X}_{t, j}$ for $\tilde{a}_{i}\leq t<\tilde{a}_{i+1}, j\in\Omega_i$, where $\tilde{a}_0=0$. We further define
\[
\mathcal{X}_{t, l_i}^{y_{l_i}, s_1-s_{\lceil\frac{l_i}{2}\rceil}}=\mathcal{X}_{t, k_i}^{y_{k_i}, s_1-s_{\lceil\frac{k_i}{2}\rceil}}
\]
for $t\geq \tilde{a}_i$ where $i=n, \ldots, 1$. Note that, since $\Omega_7$ has only one element when $\tilde{a}_7<+\infty$, we have $\tilde{a}_8=+\infty$. So the above induction has finite many steps.

The meaning of the above equivalent definition of the coalescing random walk is clear. We can equivalently consider the coalescing random walk as the process where one active coordinate is remained as the coalescence result and the other active coordinate is deleted when these two coordinates collide with each other.
For $i<j$, if $\hat{X}_{\cdot, i}$ and $\hat{X}_{\cdot, j}$ are remained until $t\geq \max\{s_1-s_{\lceil\frac{i}{2}\rceil}, s_1-s_{\lceil\frac{j}{2}\rceil}\}$ and $\hat{X}_{t, i}=\hat{X}_{t, j}$, we require that $\hat{X}_{\cdot, i}$ is remained and $\hat{X}_{\cdot, j}$ is deleted after moment $t$.

We further introduce four independent coalescing random walks. For $0\leq s_4<s_3<s_2<s_1$ and $i\in \{1, 2, 3, 4\}$, we define $\hat{\mathcal{X}}_{t, 1}^{\textbf{0}, s_1-s_i}=\hat{X}_{t, 2i-1}$ and
\[
\hat{\mathcal{X}}_{t, 2}^{x, s_1-s_i}
=
\begin{cases}
\hat{X}_{t, 2i} & \text{~if~}t<a_{2i-1, 2i},\\
\hat{X}_{t, 2i-1} & \text{~if~}t\geq a_{2i-1, 2i}.
\end{cases}
\]
Then, $\left\{\left(\hat{\mathcal{X}}_{t, 1}^{\textbf{0}, s_1-s_i}, \hat{\mathcal{X}}_{t, 2}^{x, s_1-s_i}\right)\right\}_{t\geq 0}$ is a copy of $\left\{\left(\mathcal{X}_{t, 1}^{\textbf{0}, s_1-s_i}, \mathcal{X}_{t, 2}^{x, s_1-s_i}\right)\right\}_{t\geq 0}$ and
\[
\left\{\left(\hat{\mathcal{X}}_{t, 1}^{\textbf{0}, s_1-s_i}, \hat{\mathcal{X}}_{t, 2}^{x, s_1-s_i}\right):~t\geq 0\right\}_{1\leq i\leq 4}
\]
are independent.

For $0\leq s_4<s_3<s_2<s_1$ and $r>0$, we denote by $\tilde{V}^{s_1, s_2, s_3, s_4, r}$ the random term
\[
\mathbb{E}_{\mu_p}\left(\prod_{i=1}^4\left(\eta_0(\mathcal{X}_{s_1+r, 2i-1}^{\textbf{0}, s_1-s_i})\eta_{0}(\mathcal{X}^{x, s_1-s_i}_{s_1+r, 2i})-\alpha_{\textbf{0}, x}(p)-\mathbb{E}_{\mu_p}F(\eta_{s_i+r})\right)\right).
\]
As we have explained,
\[
\mathbb{E}_{\mu_p}\left(\prod_{i=1}^4\left(F(\eta_{s_i+r})-\mathbb{E}_{\mu_p}F(\eta_{s_i+r})\right)\right)
=\tilde{\mathbb{E}}\tilde{V}^{s_1, s_2, s_3, s_4, r}.
\]
To bound $\tilde{\mathbb{E}}\tilde{V}^{s_1, s_2, s_3, s_4, r}$ from above, we further introduce some notations.
For any different $i, j\in \{1, 2, 3, 4\}$, we denote by $\hat{D}_{i, j}$ the event that
\[
\left\{\hat{\mathcal{X}}_{t, 1}^{\textbf{0}, s_1-s_i}, \hat{\mathcal{X}}_{t, 2}^{x, s_1-s_i}\right\}\bigcap \left\{\hat{\mathcal{X}}_{t, 1}^{\textbf{0}, s_1-s_j}, \hat{\mathcal{X}}_{t, 2}^{x, s_1-s_j}\right\}\neq \emptyset
\]
for some $t\geq \max\{s_1-s_i, s_1-s_j\}$. For each $i\in \{1, 2, 3, 4\}$, we denote by $\tilde{D}_i$ the event $\bigcup_{j\neq i}\hat{D}_{i, j}$.
For different $i, j, k\in \{1, 2, 3, 4\}$, we denote by $\mathcal{E}_{i, j, k}$ the event $\hat{D}_{i, j}\bigcap \hat{D}_{i, k}$. For different $i, j\in \{1, 2, 3, 4\}$ and $\{\hat{i}, \hat{j}\}=\{1, 2, 3, 4\}\setminus \{i, j\}$, we denote by $\mathcal{E}_{ij, \hat{i}\hat{j}}$ the event $\hat{D}_{i, j}\bigcap \hat{D}_{\hat{i}, \hat{j}}$ and denote by $\mathcal{E}_{ij, \hat{i}, \hat{j}}$ the event
\[
\hat{D}_{i, j}\bigcap \left(\tilde{D}_{\hat{i}}\right)^c \bigcap \left(\tilde{D}_{\hat{j}}\right)^c,
\]
where $A^c$ is the complement of the set $A$. We denote by $\tilde{\mathcal{E}}$ the event
\[
\left(\tilde{D}_{1}\right)^c \bigcap \left(\tilde{D}_{2}\right)^c \bigcap \left(\tilde{D}_{3}\right)^c\bigcap \left(\tilde{D}_{4}\right)^c.
\]
We write $\hat{D}_{i, j}$ as $\hat{D}_{i, j}^{s_1, s_2, s_3, s_4}$ and so does other notations when we need to emphasize the dependence of $\hat{D}_{i, j}$ on $s_1, s_2, s_3, s_4$. By Lemmas \ref{lemma 3.8}, \ref{lemma 3.9} and aforesaid equivalent definition of the coalescing random walk, we have the following lemma.
\begin{lemma}\label{lemma 3.10}
There exists $K_6, K_7, K_8, K_9<+\infty$ independent of $t, N, i, j, k, r$ such that
\begin{equation}\label{equ 3.20 b}
\left|\int_0^{tN}ds_1\int_0^{s_1}ds_2\int_0^{s_2}ds_3\int_0^{s_3}\tilde{\mathbb{E}}\left(\tilde{V}^{s_1, s_2, s_3, s_4, r}1_{\mathcal{E}_{i, j, k}^{s_1, s_2, s_3, s_4}}\right)ds_4
\right|\leq K_6(N\log N)^2t^2,
\end{equation}
\begin{equation}\label{equ 3.20 c}
\left|\int_0^{tN}ds_1\int_0^{s_1}ds_2\int_0^{s_2}ds_3\int_0^{s_3}\tilde{\mathbb{E}}\left(\tilde{V}^{s_1, s_2, s_3, s_4, r}1_{\mathcal{E}_{ij, \hat{i}\hat{j}}^{s_1, s_2, s_3, s_4}}\right)ds_4
\right|\leq K_7(N\log N)^2t^2,
\end{equation}
\begin{equation}\label{equ 3.20 d}
\left|\int_0^{tN}ds_1\int_0^{s_1}ds_2\int_0^{s_2}ds_3\int_0^{s_3}\tilde{\mathbb{E}}\left(\tilde{V}^{s_1, s_2, s_3, s_4, r}1_{\mathcal{E}_{ij, \hat{i}, \hat{j}}^{s_1, s_2, s_3, s_4}}\right)ds_4
\right|\leq K_8(N\log N)^2t^2
\end{equation}
and
\begin{equation}\label{equ 3.20 f}
\left|\int_0^{tN}ds_1\int_0^{s_1}ds_2\int_0^{s_2}ds_3\int_0^{s_3}\tilde{\mathbb{E}}\left(\tilde{V}^{s_1, s_2, s_3, s_4, r}1_{\tilde{\mathcal{E}}^{s_1, s_2, s_3, s_4}}\right)ds_4
\right|\leq K_9(N\log N)^2t^2
\end{equation}
for any $0\leq t\leq 1$, $N\geq 5$ and different $i, j, k\in \{1, 2, 3, 4\}$, where $1_A$ is the indicator function of the event $A$.
\end{lemma}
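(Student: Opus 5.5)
The plan is to bound $|\tilde V|$ on each event by a constant and reduce to the collision integrals of Lemmas \ref{lemma 3.8}--\ref{lemma 3.9}, except on $\tilde{\mathcal{E}}$ and $\mathcal{E}_{ij,\hat i,\hat j}$, where cancellation must be used.

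\textbf{Bounded cases.} Since each factor in $\tilde V^{s_1,s_2,s_3,s_4,r}$ is bounded by $2$ in absolute value, we have $|\tilde V|\le 16$ everywhere. The event $\hat D_{i,j}$ says that some coordinate of the $i$-th pair meets some coordinate of the $j$-th pair after both are active. Under the coupling above, such a meeting in the coalescing system forces a meeting of the corresponding independent walks $\hat X_{\cdot,k}$ and $\hat X_{\cdot,l}$ after time $\max\{s_1-s_i,s_1-s_j\}$, i.e.\ the event $\hat A_{k,l}$ for some admissible $k,l$. There is one technical point: the surviving coordinate may be the walk of index $2i$ replaced by that of $2i-1$, or a walk from a third pair. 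To handle this, I will bound $\hat D_{i,j}$ by a finite union of $\hat A_{k,l}$ together with events where the meeting occurs through a chain via another pair; the chain events are already of type $\hat H$ or $\hat J$.

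With this in hand:
\begin{itemize}
\item $\mathcal E_{i,j,k}\subset\bigcup\hat H_{\cdot,\cdot,\cdot}$ (up to such chain refinements), so \eqref{equ 3.20 b} follows from Lemma \ref{lemma 3.8} after summing finitely many terms.
\item $\mathcal E_{ij,\hat i\hat j}\subset\bigcup \hat J_{\cdot,\cdot,\cdot,\cdot}$, so \eqref{equ 3.20 c} follows from Lemma \ref{lemma 3.9}.
\end{itemize}

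\textbf{Case $\tilde{\mathcal{E}}$.} On $\tilde{\mathcal E}$ no pair ever meets another pair, so the four sets of positions at time $s_1+r$ are disjoint. Under $\mu_p$, the conditional expectation therefore factorizes: $\tilde V$ equals the product over $i$ of $\big(\mathbb E_{\mu_p}[\eta(\hat{\mathcal X}_{1})\eta(\hat{\mathcal X}_{2})]-\alpha_{\textbf{0},x}(p)-\mathbb E_{\mu_p}F(\eta_{s_i+r})\big)$. Each factor equals $\big(p\,1_{\{a_{2i-1,2i}\le s_1+r\}}+p^2 1_{\{a_{2i-1,2i}> s_1+r\}}\big)-\tilde{\mathbb E}(\text{same})$. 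Now write $1_{\tilde{\mathcal E}}=1-1_{\tilde{\mathcal E}^c}$. On the independent system, the full expectation of the product of centered independent factors vanishes. The complement $\tilde{\mathcal E}^c$ is a union of the $\hat D_{i,j}$. The only dangerous contribution is a single $\hat D_{i,j}$ with the other two pairs isolated, and there the centered factors of the isolated pairs $\hat i,\hat j$ again kill the expectation up to terms involving a further collision. So everything reduces to $\mathcal E_{i,j,k}$ and $\mathcal E_{ij,\hat i\hat j}$ bounds, giving \eqref{equ 3.20 f}.

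\textbf{Case $\mathcal E_{ij,\hat i,\hat j}$.} This is the same mechanism: factors $\hat i,\hat j$ are centered and independent of the pair $(i,j)$ on this event. Replacing $1_{(\tilde D_{\hat i})^c\cap(\tilde D_{\hat j})^c}$ by $1$ minus collision events yields zero plus terms bounded by Lemmas \ref{lemma 3.8}--\ref{lemma 3.9}.

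\textbf{Main obstacle.} The hard step is rigorously showing that the coalescing-walk events $\hat D$ are dominated by independent-walk events $\hat A$ without losing extra $\log N$ factors. A second difficulty is justifying the "replace indicator by $1$" cancellation, since the factor for pair $\hat i$ is not exactly independent of $(\tilde D_{\hat i})^c$. I would handle the latter by expanding inclusion–exclusion only to second order, and bounding all higher-order remainders crudely by $16\,\tilde{\mathbb P}(\hat H)$ or $16\,\tilde{\mathbb P}(\hat J)$.
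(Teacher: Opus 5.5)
Your plan is essentially the paper's proof, with two small corrections. Like the paper, you use the crude bound $|\tilde V|\le 16$ together with Lemmas~\ref{lemma 3.8}--\ref{lemma 3.9} on the multi-collision events $\mathcal E_{i,j,k}$ and $\mathcal E_{ij,\hat i\hat j}$. On $\mathcal E_{ij,\hat i,\hat j}$ and $\tilde{\mathcal E}$ you use the splittings $1_{\mathcal E_{ij,\hat i,\hat j}}=1_{\hat D_{i,j}}-1_{\hat D_{i,j}\setminus\mathcal E_{ij,\hat i,\hat j}}$ and $1_{\tilde{\mathcal E}}=1-1_{\tilde{\mathcal E}^c}$, combined with the product structure of $\mu_p$ and the independence and centering of the isolated pairs' factors.

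\textbf{First correction.} $\mathcal E_{i,j,k}$ is not covered by $\hat H$-events alone, because the two coordinates of pair $i$ may separately hit pairs $j$ and $k$. So \eqref{equ 3.20 b} also needs the events $\hat J_{k,l,m,\tilde m}$ with $\{k,l\}=\{2i-1,2i\}$ from Lemma~\ref{lemma 3.9}.

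\textbf{Second correction.} $\hat D_{i,j}$ is defined through the independent pairs $\hat{\mathcal X}$, so its domination by the $\hat A_{k,l}$ is immediate and is not the obstacle you describe. The coupling subtlety you raise is real, but it belongs to identifying $\tilde V$ with the factorized product on $\mathcal E_{ij,\hat i,\hat j}$. For example, on $\mathcal E_{12,3,4}$ walk $3$ can be absorbed by walk $1$, and the surviving walk $4$ can later hit walk $5$. Any such mismatch again lies in an $\hat H$- or $\hat J$-type event, so Lemmas~\ref{lemma 3.8}--\ref{lemma 3.9} still absorb it.
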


\proof[Proof of Lemma \ref{lemma 3.10}]
We only deal with the case where $(i, j, k)=(1, 2, 3)$, since other cases follow from similar arguments. According to our equivalent definition of the coalescing random walks,
\[
\mathcal{E}_{1, 2, 3}\subseteq \bigcup_{k, l, n} \hat{H}_{k, l, n}\bigcup\bigcup_{k, l, m, \tilde{m}}\hat{J}_{k, l, m, \tilde{m}},
\]
where the first union is over all $(k, l, n)$ such that $k\in \{1, 2\}, l\in \{3, 4\}, n\in \{5, 6\}$ and the third union is over all $(k, l, m, \tilde{m})$ such that $\{k, l\}=\{1, 2\}, m\in \{3, 4\}, \tilde{m}\in \{5, 6\}$. Hence, the existence of $K_6$ follows from Lemmas \ref{lemma 3.8} and \ref{lemma 3.9}.

Similarly,
\[
\mathcal{E}_{12, 34}\subseteq \bigcup_{k, l, m, \tilde{m}}\hat{J}_{k, l, m, \tilde{m}},
\]
where the union is over all $(k, l, m, \tilde{m})$ such that $k\in \{1, 2\}, l\in \{5, 6\}, m\in \{3, 4\}, \tilde{m}\in \{7, 8\}$. Hence, the existence of $K_7$ follows from Lemma \ref{lemma 3.9}.

On the event $\mathcal{E}_{12, 3, 4}$,
\[
\left(\mathcal{X}_{s_1+r, 5}^{\textbf{0}, s_1-s_3}, \mathcal{X}_{s_1+r, 6}^{x, s_1-s_3}\right)
=\left(\hat{\mathcal{X}}_{s_1+r, 1}^{\textbf{0}, s_1-s_3}, \hat{\mathcal{X}}_{s_1+r, 2}^{x, s_1-s_3}\right),
\]
\[
\left(\mathcal{X}_{s_1+r, 7}^{\textbf{0}, s_1-s_4}, \mathcal{X}_{s_1+r, 8}^{x, s_1-s_4}\right)
=\left(\hat{\mathcal{X}}_{s_1+r, 1}^{\textbf{0}, s_1-s_4}, \hat{\mathcal{X}}_{s_1+r, 2}^{x, s_1-s_4}\right)
\]
and
\[
\mathbb{E}_{\mu_p}\left(\prod_{i=1}^2\left(\eta_0(\mathcal{X}_{s_1+r, 2i-1}^{\textbf{0}, s_1-s_i})\eta_{0}(\mathcal{X}^{x, s_1-s_i}_{s_1+r, 2i})-\alpha_{\textbf{0}, x}(p)-\mathbb{E}_{\mu_p}F(\eta_{s_i+r})\right)\right)
\]
equals a random variable $\mathcal{U}^{s_1, s_2, s_3, s_4, r}$ which is measurable with
\[
\left\{\left(\hat{\mathcal{X}}_{t, 1}^{\textbf{0}, 0}, \hat{\mathcal{X}}_{t, 2}^{x, 0}, \hat{\mathcal{X}}_{t, 1}^{\textbf{0}, s_1-s_2}, \hat{\mathcal{X}}_{t, 2}^{x, s_1-s_2}\right)\right\}_{t\geq 0}.
\]
Then, according to the fact that $\mu_p$ is a product measure,
\[
\tilde{V}^{s_1, s_2, s_3, s_4, r}=\tilde{R}^{s_1, s_2, s_3, s_4, r}
\]
on the event $\mathcal{E}_{12, 3, 4}$, where
\begin{align*}
\tilde{R}^{s_1, s_2, s_3, s_4, r}=&\mathcal{U}^{s_1, s_2, s_3, s_4, r}
\mathbb{E}_{\mu_p}\left(\eta_0(\hat{\mathcal{X}}_{s_1+r, 1}^{\textbf{0}, s_1-s_3})\eta_{0}(\hat{\mathcal{X}}^{x, s_1-s_3}_{s_1+r, 2})-\alpha_{\textbf{0}, x}(p)-\mathbb{E}_{\mu_p}F(\eta_{s_3+r})\right)\\
&\times\mathbb{E}_{\mu_p}\left(\eta_0(\hat{\mathcal{X}}_{s_1+r, 1}^{\textbf{0}, s_1-s_4})\eta_{0}(\hat{\mathcal{X}}^{x, s_1-s_4}_{s_1+r, 2})-\alpha_{\textbf{0}, x}(p)-\mathbb{E}_{\mu_p}F(\eta_{s_4+r})\right).
\end{align*}
Since
\begin{align*}
\hat{D}_{1, 2}\setminus \mathcal{E}_{12, 3, 4}\subseteq\bigcup_{\{k, l, n\}=\{1, 2, 3\}}\mathcal{E}_{k, l, n}\bigcup\bigcup_{\{\tilde{k}, \tilde{l}, \tilde{n}\}=\{1, 2, 4\}}\mathcal{E}_{\tilde{k}, \tilde{l}, \tilde{n}}\bigcup\mathcal{E}_{12, 34},
\end{align*}
there exists $K_{11}<+\infty$ independent of $t, N, r$ such that
\begin{align}\label{equ 3.24 b}
&\left|\int_0^{tN}ds_1\int_0^{s_1}ds_2\int_0^{s_2}ds_3\int_0^{s_3}\tilde{\mathbb{E}}\left(\tilde{R}^{s_1, s_2, s_3, s_4, r}1_{\hat{D}_{1, 2}\setminus\mathcal{E}_{12, 3, 4}^{s_1, s_2, s_3, s_4}}\right)ds_4
\right|\notag\\
&\leq K_{11}(N\log N)^2t^2
\end{align}
for any $0\leq t\leq 1$ and $N\geq 5$ according to an argument similar to those leading to \eqref{equ 3.20 b} and \eqref{equ 3.20 c}.

Since $\hat{D}_{1, 2}$ is measurable with
\[
\left\{\left(\hat{\mathcal{X}}_{t, 1}^{\textbf{0}, 0}, \hat{\mathcal{X}}_{t, 2}^{x, 0}, \hat{\mathcal{X}}_{t, 1}^{\textbf{0}, s_1-s_2}, \hat{\mathcal{X}}_{t, 2}^{x, s_1-s_2}\right)\right\}_{t\geq 0},
\]
we have
\begin{align*}
&\tilde{\mathbb{E}}\left(\tilde{R}^{s_1, s_2, s_3, s_4, r}1_{\hat{D}_{1, 2}}\right)\\
&=\tilde{\mathbb{E}}\left(\mathcal{U}^{s_1, s_2, s_3, s_4, r}1_{\hat{D}_{1, 2}}\right)\tilde{\mathbb{E}}\left(\mathbb{E}_{\mu_p}\left(\eta_0(\hat{\mathcal{X}}_{s_1+r, 1}^{\textbf{0}, s_1-s_3})\eta_{0}(\hat{\mathcal{X}}^{x, s_1-s_3}_{s_1+r, 2})-\alpha_{\textbf{0}, x}(p)-\mathbb{E}_{\mu_p}F(\eta_{s_3+r})\right)\right)\\
&\text{\quad}\times\tilde{\mathbb{E}}\left(\mathbb{E}_{\mu_p}\left(\eta_0(\hat{\mathcal{X}}_{s_1+r, 1}^{\textbf{0}, s_1-s_4})\eta_{0}(\hat{\mathcal{X}}^{x, s_1-s_4}_{s_1+r, 2})-\alpha_{\textbf{0}, x}(p)-\mathbb{E}_{\mu_p}F(\eta_{s_4+r})\right)\right).
\end{align*}
According to Proposition \ref{Proposition A.1 dual},
\begin{align*}
&\tilde{\mathbb{E}}\left(\mathbb{E}_{\mu_p}\left(\eta_0(\hat{\mathcal{X}}_{s_1+r, 1}^{\textbf{0}, s_1-s_3})\eta_{0}(\hat{\mathcal{X}}^{x, s_1-s_3}_{s_1+r, 2})-\alpha_{\textbf{0}, x}(p)-\mathbb{E}_{\mu_p}F(\eta_{s_3+r})\right)\right)\\
&=\tilde{\mathbb{E}}\left(\mathbb{E}_{\mu_p}\left(\eta_0(\hat{\mathcal{X}}_{s_1+r, 1}^{\textbf{0}, s_1-s_3})\eta_{0}(\hat{\mathcal{X}}^{x, s_1-s_3}_{s_1+r, 2})-\mathbb{E}_{\mu_p}\left(\eta_{s_3+r}(\textbf{0})\eta_{s_3+r}(x)\right)\right)\right)\\
&=\mathbb{E}_{\mu_p}\left(\eta_{s_3+r}(\textbf{0})\eta_{s_3+r}(x)\right)-\mathbb{E}_{\mu_p}\left(\eta_{s_3+r}(\textbf{0})\eta_{s_3+r}(x)\right)=0.
\end{align*}
Therefore,
\begin{equation}\label{equ 3.24 c}
\tilde{\mathbb{E}}\left(\tilde{R}^{s_1, s_2, s_3, s_4, r}1_{\hat{D}_{1, 2}}\right)=0.
\end{equation}
The existence of $K_8$ follows from \eqref{equ 3.24 b} and \eqref{equ 3.24 c}.

On the event $\tilde{\mathcal{E}}$,
\[
\left(\mathcal{X}_{s_1+r, 2i-1}^{\textbf{0}, s_1-s_i}, \mathcal{X}_{s_1+r, 2i}^{x, s_1-s_i}\right)
=\left(\hat{\mathcal{X}}_{s_1+r, 1}^{\textbf{0}, s_1-s_i}, \hat{\mathcal{X}}_{s_1+r, 2}^{x, s_1-s_i}\right)
\]
for $i=1, 2, 3, 4$. So, on the event $\tilde{\mathcal{E}}$,
\[
\tilde{V}^{s_1, s_2, s_3, s_4, r}=\tilde{W}^{s_1, s_2, s_3, s_4, r},
\]
where
\[
\tilde{W}^{s_1, s_2, s_3, s_4, r}=\prod_{i=1}^4\mathbb{E}_{\mu_p}\left(\eta_0(\hat{\mathcal{X}}_{s_1+r, 1}^{\textbf{0}, s_1-s_i})\eta_{0}(\hat{\mathcal{X}}^{x, s_1-s_i}_{s_1+r, 2})-\alpha_{\textbf{0}, x}(p)-\mathbb{E}_{\mu_p}F(\eta_{s_i+r})\right).
\]
According to an argument similar to that leading to \eqref{equ 3.24 c}, we have
\begin{equation}\label{equ 3.24 d}
\tilde{\mathbb{E}}\left(\tilde{W}^{s_1, s_2, s_3, s_4, r}\right)=0.
\end{equation}
Since
\[
(\tilde{\mathcal{E}})^c\subseteq \bigcup_{i, j, k}\mathcal{E}_{i, j, k}\bigcup \bigcup_{i, j}\mathcal{E}_{ij, \hat{i}\hat{j}}\bigcup_{i, j}\mathcal{E}_{ij, \hat{i}, \hat{j}},
\]
there exists $K_{12}<+\infty$ independent of $t, N, r$ such that
\begin{align}\label{equ 3.24 f}
&\left|\int_0^{tN}ds_1\int_0^{s_1}ds_2\int_0^{s_2}ds_3\int_0^{s_3}\tilde{\mathbb{E}}\left(\tilde{W}^{s_1, s_2, s_3, s_4, r}1_{(\tilde{\mathcal{E}}^{s_1, s_2, s_3, s_4})^c}\right)ds_4
\right|\notag\\
&\leq K_{12}(N\log N)^2t^2
\end{align}
for any $0\leq t\leq 1$ and $N\geq 5$ according to an argument similar to those leading to \eqref{equ 3.20 b}, \eqref{equ 3.20 c} and \eqref{equ 3.20 d}. The existence of $K_9$ follows from \eqref{equ 3.24 d} and \eqref{equ 3.24 f}.
\qed

At last, we prove Theorem \ref{theorem 2.1 main theorem d=4}.

\proof[Proof of Theorem \ref{theorem 2.1 main theorem d=4}]
Since
\[
\tilde{\mathcal{E}}\bigcup\bigcup_{i, j}\mathcal{E}_{ij, \hat{i}, \hat{j}}\bigcup\bigcup_{i, j}\mathcal{E}_{ij, \hat{i}\hat{j}}
\bigcup\bigcup_{i, j, k}\mathcal{E}_{i, j, k}
\]
is the universal set, by Lemma \ref{lemma 3.10}, there exists $K_{10}<+\infty$ independent of $t, N, r$ such that
\[
\int_{0}^{tN}ds_1\int_{0}^{s_1}ds_2\int_{0}^{s_2}ds_3\int_0^{s_3}\mathbb{E}_{\mu_p}\left(\prod_{i=1}^4\left(F(\eta_{s_i+r})
-\mathbb{E}_{\mu_p}F(\eta_{s_i+r})\right)\right)ds_4
\leq K_{10}t^2(N\log N)^2
\]
for any $0\leq t\leq 1, N\geq 5$ and $r\geq 0$. Then, according to the definition of $\nu_p$ and the invariance of $\nu_p$,
\begin{equation}\label{equ 3.25}
\mathbb{E}_{\nu_p}\left(\left(\frac{\xi_{tN}^{\textbf{0}, x}-tN\alpha_{\textbf{0}, x}(p)}{\sqrt{N\log N}}-\frac{\xi_{sN}^{\textbf{0}, x}-sN\alpha_{\textbf{0}, x}(p)}{\sqrt{N\log N}}\right)^4\right)\leq 24K_{10}(t-s)^2
\end{equation}
for any $N\geq 5$ and $0\leq s\leq t\leq 1$. According to Equation \eqref{equ 3.25} and Corollary 14.9 of \cite{Kallenberg1997}, the processes
\[
\left\{\frac{\xi_{tN}^{\textbf{0}, x}-tN\alpha_{\textbf{0}, x}(p)}{\sqrt{N\log N}}:~0\leq t\leq T\right\}_{N\geq 1}
\]
are tight under the uniform topology of $C[0, T]$. Theorem \ref{theorem 2.1 main theorem d=4} follows from decomposition \eqref{equ 3.2 martingale decomposition d=4}, Lemmas \ref{lemma 3.1}, \ref{lemma 3.2 martingale clt d=4} and the aforesaid tightness.
\qed

\section{Proof of Theorem \ref{theorem 2.3 main theorem d=3}}\label{section four}
In this Section, we prove Theorem \ref{theorem 2.3 main theorem d=3}. As we have explained in Section \ref{section two}, our proof of Theorem \ref{theorem 2.3 main theorem d=3} utilizes a quadratic fluctuation limit strategy. So we first introduce the quadratic fluctuation density field. For any $N\geq 1$ and $t\geq 0$, the quadratic fluctuation density field $\mathcal{Y}_t^N$ is defined as
\[
\mathcal{Y}_t^N(du, dv)=\frac{1}{N^{\frac{11}{4}}}\sum_{z_1\in \mathbb{Z}^3}\sum_{z_2\in \mathbb{Z}^3}\left(\eta_{tN}(z_1)\eta_{tN}(z_2)-\alpha_{z_1, z_2}(p)\right)
\delta_{z_1/\sqrt{N}}(du)\delta_{z_2/\sqrt{N}}(dv),
\]
where $\delta_a(du)$ is the Dirac measure concentrated on $a$ for any $a\in \mathbb{R}^3$. The quadratic fluctuation density field $\mathcal{Y}_t^N$ can be considered as a random tempered distribution on $\mathbb{R}^6$, i.e.,
\[
\mathcal{Y}_t^N(f)=\frac{1}{N^{\frac{11}{4}}}\sum_{z_1\in \mathbb{Z}^3}\sum_{z_2\in \mathbb{Z}^3}\left(\eta_{tN}(z_1)\eta_{tN}(z_2)-\alpha_{z_1, z_2}(p)\right)
f\left(\frac{z_1}{\sqrt{N}}, \frac{z_2}{\sqrt{N}}\right)
\]
for any Schwartz function $f: \mathbb{R}^6\rightarrow \mathbb{R}$.

To give our quadratic fluctuation theorem, i.e., the CLT of $\{\mathcal{Y}^N\}_{N\geq 1}$, we introduce the following generalized Ornstein-Uhlenbeck (O-U) process
\begin{equation}\label{equ 4.1 generalized O-U}
d\mathcal{Y}_t=\Delta\mathcal{Y}_tdt+Cd\mathcal{B}_t
\end{equation}
for some nonzero constant $C$, where $\Delta$ is the Laplace operator on $\mathbb{R}^6$ such that $\Delta f(u)=\sum_{j=1}^6\partial^2_{u_ju_j}f(u)$ and $(\Delta \mathcal{A})(f)=\mathcal{A}(\Delta f)$
for any Schwartz function $f: \mathbb{R}^6\rightarrow \mathbb{R}$ and tempered distribution $\mathcal{A}$ on $\mathbb{R}^6$ and $\{\mathcal{B}_t\}_{t\geq 0}$ is a Gaussian time-space noise such that $\{\mathcal{B}_t(f)\}_{t\geq 0}$ is a copy of
\[
\left\{\sqrt{\int_{\mathbb{R}^3}\left(\int_{\mathbb{R}^3}f(u, v)+f(v, u)dv\right)^2du}B_t\right\}_{t\geq 0}
\]
for any Schwartz function $f: \mathbb{R}^6\rightarrow \mathbb{R}$, where $\{B_t\}_{t\geq 0}$ is the standard real-valued Brownian motion defined as in Section \ref{section two}.

According to the theory of the generalized O-U process established in \cite{Holley1978}, the solution to Equation \eqref{equ 4.1 generalized O-U} has two equivalent rigorous definitions. The first definition relates Equation \eqref{equ 4.1 generalized O-U} to a martingale problem. In detail, when the distribution of $\mathcal{Y}_0$ is given, according to Theorem 1.4 of \cite{Holley1978} there exists a unique tempered-distribution-valued random process $\{\mathcal{Y}_t\}_{t\geq 0}$ such that
\begin{align*}
\left\{\mathcal{Y}_t(f)-\mathcal{Y}_0(f)-\int_0^t\mathcal{Y}_s(\Delta f)ds\right\}_{t\geq 0}
%-\frac{C^2}{2}[f, f]\int_0^tG^{\prime\prime}(\mathcal{Y}_s(f))ds\Bigg\}_{t\geq 0}
\end{align*}
is a martingale with quadratic variation process $\{C^2[f, f]t\}_{t\geq 0}$ for any schwartz function $f:\mathbb{R}^6\rightarrow \mathbb{R}$, where
\[
\left[f, g\right]=\int_{\mathbb{R}^3}\left(\int_{\mathbb{R}^3}\left(f(u, v)+f(v, u)\right)dv\int_{\mathbb{R}^3}\left(g(u, v)+g(v, u)\right)dv\right)du
\]
for any Schwartz functions $f, g: \mathbb{R}^6\rightarrow \mathbb{R}$. According to It\^{o}'s formula, it is natural to define the above unique $\{\mathcal{Y}_t\}_{t\geq 0}$ as the solution to \eqref{equ 4.1 generalized O-U}.

The second definition is via solving \eqref{equ 4.1 generalized O-U} directly by the integrating factor method. In detail, let $\{\mathcal{P}_t\}_{t\geq 0}$ be the Markov semigroup with respect to $\Delta$, i.e,
\[
\mathcal{P}_tf(u)=\mathbb{E}f(\mathcal{W}_{2t}^6+u)\text{~and~}(\mathcal{P}_t\mathcal{A})f=\mathcal{A}(\mathcal{P}_tf)
\]
for any $u\in \mathbb{R}^6$, Schwartz function $f: \mathbb{R}^6\rightarrow \mathbb{R}$ and tempered distribution $\mathcal{A}$ on $\mathbb{R}^6$, where $\{\mathcal{W}_t^d\}_{t\geq 0}$ is the $\mathbb{R}^6$-valued standard Brownian motion starting from the origin defined as in Section \ref{section two}. Then, when the distribution of $\mathcal{Y}_0$ is given, the solution $\{\mathcal{Y}_t\}_{t\geq 0}$ to Equation \eqref{equ 4.1 generalized O-U} can be directly defined as
\begin{equation}\label{equ 4.2 solving O-U by integrating factor}
\mathcal{Y}_t=\mathcal{P}_t\mathcal{Y}_0+C\int_0^t\mathcal{P}_{t-s}d\mathcal{B}_s,
\end{equation}
where $\mathcal{Y}_0$ is independent of $\{\mathcal{B}_t\}_{t\geq 0}$. According to the argument given in the proof of Theorem 1.4 of \cite{Holley1978}, the above two definitions of the solution to \eqref{equ 4.1 generalized O-U} are equivalent.

By \eqref{equ 4.2 solving O-U by integrating factor}, for any Schwartz functions $f, g: \mathbb{R}^6\rightarrow \mathbb{R}$ and $0\leq s\leq t$,
\[
{\rm Cov}(\mathcal{Y}_t(f), \mathcal{Y}_s(g))
={\rm Cov}(\mathcal{Y}_0(\mathcal{P}_tf), \mathcal{Y}_0(\mathcal{P}_sf))+C^2\int_0^s\left[\mathcal{P}_{t-r}f, \mathcal{P}_{s-r}g\right]dr.
\]
Then, since the generator of a Markov process is a negative operator, the process $\{\mathcal{Y}_t\}_{t\geq 0}$ has a unique invariant distribution $\Lambda$ such that $\mathcal{A}(f)$ follows the normal distribution with mean zero and variance
\[
C^2\int_0^{+\infty}\left[\mathcal{P}_rf, \mathcal{P}_rf\right]dr
\]
for any Schwartz functions $f:\mathbb{R}^6\rightarrow \mathbb{R}$ and random tempered distribution $\mathcal{A}$ which is distributed with $\Lambda$. We write $\Lambda$ as $\Lambda_C$ when we need to emphasize the dependence of $\Lambda$ on $C$.

Now we give our quadratic fluctuation limit theorem.
\begin{lemma}\label{Lemma 4.1 quadratic fluctuation limit}
Let $p\in (0, 1)$ and $\eta_0$ be distributed with $\nu_p$. For any $T>0$, the random process
\[
\left\{\mathcal{Y}_t^N:~0\leq t\leq T\right\}
\]
converges weakly, with respect to the Skorohod topology, to $\{\mathcal{Y}_t\}_{0\leq t\leq T}$ as $N\rightarrow+\infty$, where $\{\mathcal{Y}_t\}_{t\geq 0}$ is the solution to Equation \eqref{equ 4.1 generalized O-U} with $C=\sqrt{6\chi(p)p^2}$ and $\mathcal{Y}_0$ being distributed with $\Lambda_C$.
\end{lemma}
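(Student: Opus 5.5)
We follow the standard Holley--Stroock route: martingale problem for $\mathcal{Y}^N$, tightness, identification of the limit. Fix a Schwartz $f$ on $\mathbb{R}^6$ and apply Dynkin's formula to $F_f(\eta)=N^{-11/4}\sum_{z_1,z_2}(\eta(z_1)\eta(z_2)-\alpha_{z_1,z_2}(p))f(z_1/\sqrt N,z_2/\sqrt N)$ under the sped-up generator $N\mathcal{L}$. This gives
\[
\mathcal{Y}_t^N(f)=\mathcal{Y}_0^N(f)+\int_0^t N\mathcal{L}F_f(\eta_{sN})ds+\mathcal{M}_t^N(f).
\]

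\textbf{Drift.} By duality (Proposition A.1), $\mathcal{L}$ acting on $\eta(z_1)\eta(z_2)$ is the generator of the two-particle coalescing walk. Away from the diagonal this is $\mathcal{G}\otimes I+I\otimes\mathcal{G}$, and $\alpha_{z_1,z_2}(p)$ is harmonic for it. Summing by parts,
\[
N\mathcal{L}F_f=\mathcal{Y}^N_{\cdot}(\Delta_N f)+\text{(diagonal correction)},
\]
where $\Delta_N$ is the rescaled discrete Laplacian, which converges to $\Delta$ on $\mathbb{R}^6$; the normalization of $\mathcal{G}$ and of $\Delta$ is consistent with the time-$2t$ heat semigroup $\mathcal{P}_t$. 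The correction comes from pairs $z_1\sim z_2$, where the coalescing walk differs from independent walks. It has the form $N^{1-11/4}\sum_{z\sim w}(\eta(z)-\eta(w))^2\times O(|f|+|\nabla f|/\sqrt N)$ plus its centering, a sum over about $N^{3/2}$ near-diagonal sites.

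To show this correction is negligible in $L^2(\nu_p)$, we bound its variance using the covariance estimate \eqref{equ 3.5} (Lemma \ref{lemma 3.7 time correlation decay}). In $d=3$ the spatial correlations of $(\eta(z)-\eta(w))^2$ decay like $|z-z'|^{-1}$. This gives variance at most $N^{-7/2}\cdot N^{3/2}\cdot N^{3/2}\cdot N^{-1/2}=N^{-1}$, before any time averaging, so the correction vanishes.

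\textbf{Martingale.} The quadratic variation is
\[
\langle\mathcal{M}^N(f)\rangle_t=\int_0^t N\sum_{z}\sum_{w\sim z}\bigl(F_f(\eta^{z,w}_{sN})-F_f(\eta_{sN})\bigr)^2ds .
\]
A flip at $z$ changes $F_f$ by
\[
N^{-11/4}(\eta(w)-\eta(z))\sum_{v}\eta(v)\bigl(f(z/\sqrt N,v/\sqrt N)+f(v/\sqrt N,z/\sqrt N)\bigr)+O(N^{-11/4}).
\]
Replacing $\sum_v\eta(v)$ by $pN^{3/2}\int(\cdot)\,dv$ costs a fluctuation of order $N^{3/4}$ against $N^{3/2}$, which is negligible. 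The quadratic variation therefore becomes
\[
\int_0^t N^{1-11/2+3}p^2\sum_z\sum_{w\sim z}(\eta_{sN}(w)-\eta_{sN}(z))^2\,g_f(z/\sqrt N)^2ds,
\qquad g_f(u)=\int\bigl(f(u,v)+f(v,u)\bigr)dv .
\]
Here $N^{1-11/2+3}=N^{-3/2}$ is exactly the Riemann-sum normalization for $\sum_z$. The mean, computed with \eqref{equ 2.2 second moment of etax-etay} over the $6$ neighbours, is $6\chi(p)p^2[f,f]t$. The variance vanishes by Lemma \ref{lemma 3.7 time correlation decay} together with the spatial decay, as in the proof of \eqref{equ 3.7 QV variance d=4}. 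Jumps of $\mathcal{M}^N(f)$ are $O(N^{-5/4})$, so the martingale CLT applies with $C^2=6\chi(p)p^2$.

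\textbf{Tightness, initial law, identification.} Tightness in $D([0,T],\mathcal{S}'(\mathbb{R}^6))$ follows from Mitoma's criterion, reducing to tightness of each $\mathcal{Y}^N(f)$. For these we use Aldous' criterion, with the uniform second-moment bounds above and the stationarity of $\nu_p$. The initial law has Gaussian limit $\Lambda_C$; the simplest route is that every limit point solves the martingale problem with stationary marginals, and by Holley--Stroock the unique stationary solution has marginal $\Lambda_C$. Checking this requires $\sup_N\mathrm{Var}(\mathcal{Y}_0^N(f))<\infty$, which again follows from \eqref{equ 3.5}. Uniqueness for the martingale problem (Theorem 1.4 of \cite{Holley1978}) then identifies the limit.

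\textbf{Main obstacle.} We expect the hardest step to be the $L^2$ control of the near-diagonal drift correction and of the replacement $\sum_v\eta(v)\approx p\int dv$ in the quadratic variation. In $d=3$ correlations decay only like $|x|^{-1}$, so the bounds rest on the precise exponent $11/4$ and must be done carefully via duality with coalescing walks.
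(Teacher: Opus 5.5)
Your proposal is correct and follows essentially the same route as the paper. Both arguments use Dynkin's martingale for $\mathcal{Y}^N(f)$ and identify the drift as $\mathcal{Y}^N(\Delta_N f)$ plus a negligible diagonal term. Both show the quadratic variation converges in $L^2$ to $6\chi(p)p^2[f,f]t$ via spatial correlation decay and Lemma \ref{lemma 3.7 time correlation decay}, get tightness from Aldous' criterion, and identify the limit through the martingale problem together with stationarity.

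There are two harmless slips. First, the drift correction lives on the diagonal $z_1=z_2$: it equals $N^{-7/4}\sum_z f(\frac{z}{\sqrt N},\frac{z}{\sqrt N})\sum_{w\sim z}(\eta(w)-\eta(z))^2$ minus its mean, so it is deterministically $O(N^{-1/4})$ and your variance estimate is not needed. Second, since $\Phi(x)\asymp\|x\|_1^{-1}$, the fluctuation of $\sum_v(\eta(v)-p)h(v/\sqrt N)$ is of order $N^{5/4}$, not $N^{3/4}$. This is still negligible against $N^{3/2}$, so your replacement step survives.
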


According to Theorem 2 of \cite{Presutti1983}, when $\eta_0$ is distributed with $\nu_p$, the fluctuation density field process
\[
\left\{\frac{1}{N^{5/4}}\sum_{z\in \mathbb{Z}^3}\left(\eta_{tN}(z)-p\right)\delta_{z/\sqrt{N}}(du)\right\}_{t\geq 0}
\]
converges weakly to a generalized O-U process as $N\rightarrow+\infty$. Lemma \ref{Lemma 4.1 quadratic fluctuation limit} extends this fluctuation limit theorem to a quadratic version.

The proof of Lemma \ref{Lemma 4.1 quadratic fluctuation limit} utilizes a Dynkin's martingale strategy, which we give later. Now we utilize Lemma \ref{Lemma 4.1 quadratic fluctuation limit} to prove Theorem \ref{theorem 2.3 main theorem d=3}. From now on, we let $\{\mathcal{Y}_t\}_{t\geq 0}$ be defined as in Lemma \ref{Lemma 4.1 quadratic fluctuation limit}. Then, for any $0<t_1<t_2<\ldots<t_m$ and $f_1, \ldots, f_m\in L^2(\mathbb{R}^6)$,
\[
\left(\mathcal{Y}_{t_1}(f_1), \mathcal{Y}_{t_2}(f_2), \ldots, \mathcal{Y}_{t_m}(f_m)\right)
\]
is a $\mathbb{R}^m$-valued Gaussian random variable such that $\mathbb{E}\mathcal{Y}_{t_i}(f_i)=0$ and
\begin{equation}\label{equ 4.3}
{\rm Cov}\left(\mathcal{Y}_{t_i}(f_i), \mathcal{Y}_{t_j}(f_j)\right)=6\chi(p)p^2\int_0^{+\infty}\left[\mathcal{P}_{r+t_j-t_i}f_j, \mathcal{P}_rf_i\right]dr
\end{equation}
for any $1\leq i\leq j\leq m$. Let $u\in \mathbb{R}^3$ such that $u\neq \textbf{0}$. For any $\epsilon>0$, we define $\hat{H}_\epsilon: \mathbb{R}^6\rightarrow \mathbb{R}$ as
\[
\hat{H}_\epsilon(v_1, v_2)=\frac{1}{(2\epsilon)^6}1_{\{\|v_1\|_1\leq \epsilon\}}1_{\{\|v_2-u\|_1\leq \epsilon\}}
\]
for any $v_1, v_2\in \mathbb{R}^3$, where $1_A$ is the indicator function of the event $A$. We have the following lemma.
\begin{lemma}\label{lemma 4.2 covariance limit}
For any $0<s\leq t$,
\[
\lim_{\epsilon\rightarrow 0}{\rm Cov}\left(\int_0^s\mathcal{Y}_r(\hat{H}_\epsilon)dr, \int_0^t\mathcal{Y}_r(\hat{H}_\epsilon)dr\right)=C_3^2{\rm Cov}(\zeta_s^u, \zeta_t^u),
\]
where $C_3, \zeta_t^u$ are defined as in Theorem \ref{theorem 2.3 main theorem d=3}.
\end{lemma}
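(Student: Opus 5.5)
The plan is to compute the covariance directly from the Gaussian structure \eqref{equ 4.3}, evaluate the bracket $[\cdot,\cdot]$ explicitly for $\mathcal{P}_a\hat{H}_\epsilon$, and pass to the limit $\epsilon\rightarrow 0$ by dominated convergence.

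\textbf{Step 1 (Fubini and \eqref{equ 4.3}).} By Fubini,
\[
{\rm Cov}\left(\int_0^s\mathcal{Y}_r(\hat{H}_\epsilon)dr, \int_0^t\mathcal{Y}_r(\hat{H}_\epsilon)dr\right)=\int_0^t\int_0^s{\rm Cov}\left(\mathcal{Y}_{\theta_1}(\hat{H}_\epsilon), \mathcal{Y}_{\theta_2}(\hat{H}_\epsilon)\right)d\theta_2d\theta_1 .
\]
I split the domain $[0,t]\times[0,s]$ into three regions: $\{s\leq\theta_1\leq t,\ 0\leq \theta_2\leq s\}$, and the two triangles $\{\theta_2\leq\theta_1\}$ and $\{\theta_1<\theta_2\}$ inside $[0,s]^2$. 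The two triangles contribute equally by symmetry. This produces exactly the two pieces, with outer factors $2$ and $4$, that appear in ${\rm Cov}(\zeta_t^u,\zeta_s^u)$. On each region I apply \eqref{equ 4.3} with the later time playing the role of $t_j$.

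\textbf{Step 2 (computing the bracket).} Since $\mathcal{P}_a$ on $\mathbb{R}^6$ is convolution with the law of $\mathcal{W}^6_{2a}$, i.e.\ a product of two independent three-dimensional Gaussian kernels, integrating $\mathcal{P}_a\hat{H}_\epsilon(w,v)$ in $v$ gives
\[
\kappa_\epsilon\,(f^3_{2a}*\rho_\epsilon)(w),
\]
where $\rho_\epsilon$ is the normalized uniform density on $\{\|v\|_1\leq\epsilon\}$ and $\kappa_\epsilon$ is the total mass of $\hat{H}_\epsilon$. Integrating in the first variable instead gives the same expression shifted by $u$. Hence
\[
\left[\mathcal{P}_a\hat{H}_\epsilon,\mathcal{P}_b\hat{H}_\epsilon\right]=\kappa_\epsilon^2\int_{\mathbb{R}^3}\big(g_a(w)+g_a(w-u)\big)\big(g_b(w)+g_b(w-u)\big)dw,
\qquad g_a=f^3_{2a}*\rho_\epsilon .
\]
As $\epsilon\rightarrow0$, the Chapman--Kolmogorov identity $\int f^3_{2a}(w)f^3_{2b}(w-y)dw=f^3_{2(a+b)}(y)$ shows that this converges to
\[
\kappa^2\left(2f^3_{2(a+b)}(\textbf{0})+2f^3_{2(a+b)}(u)\right),
\]
where $\kappa=\lim_{\epsilon\rightarrow0}\kappa_\epsilon$ (which should be $1$ under the intended normalization).

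\textbf{Step 3 (matching constants).} With $a=r+\theta_1-\theta_2$ and $b=r$ we have $a+b=2r+\theta_1-\theta_2$. The substitution $r\mapsto r/2$ turns this into $f^3_{2(\theta_1-\theta_2+r)}$ and produces a factor $\frac12$. This $\frac12$ cancels the factor $2$ in the bracket. Finally $6\chi(p)p^2=12(1-\gamma_3)p^3(1-p)=2C_3^2$, and together with the region factors this should reproduce $C_3^2\,{\rm Cov}(\zeta_s^u,\zeta_t^u)$. I will check the bookkeeping of the factor $2$ carefully here, including how it combines with the normalization of $\hat{H}_\epsilon$.

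\textbf{Step 4 (domination; the main obstacle).} The real issue is justifying the interchange of $\lim_{\epsilon\rightarrow0}$ with the $r,\theta_1,\theta_2$ integrals, since $f^3_{2a}(\textbf{0})\sim a^{-3/2}$ is not integrable in $r$ near $0$ when $\theta_1=\theta_2$. I will use the uniform bound
\[
\left[\mathcal{P}_a\hat{H}_\epsilon,\mathcal{P}_b\hat{H}_\epsilon\right]\leq 4\kappa_\epsilon^2\sup_y f^3_{2(a+b)}(y)\leq c\,(a+b)^{-3/2}.
\]
This holds because $g_a,g_b$ are probability densities and $\int g_a(w)g_b(w-y)dw=(f^3_{2(a+b)}*\rho_\epsilon*\tilde\rho_\epsilon)(y)$, where $\tilde\rho_\epsilon(v)=\rho_\epsilon(-v)$; this quantity is bounded by the supremum of $f^3_{2(a+b)}$. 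Then
\[
\int_0^\infty(2r+\theta)^{-3/2}dr=O(\theta^{-1/2}),
\]
which is integrable in $\theta=\theta_1-\theta_2$ over bounded regions. Dominated convergence therefore applies on each of the three regions and completes the proof.
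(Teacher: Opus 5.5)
Your proposal is correct and follows the same route as the paper. You compute $[\mathcal{P}_a\hat{H}_\epsilon,\mathcal{P}_b\hat{H}_\epsilon]$ from the product structure of $\mathcal{P}_a$ and Chapman--Kolmogorov, obtaining $2f^3_{2(a+b)}(\textbf{0})+2f^3_{2(a+b)}(u)$ as in \eqref{equ 4.5}, then insert this into \eqref{equ 4.3} with $r\mapsto r/2$ and $6\chi(p)p^2=2C_3^2$; your Step 4 bound $[\mathcal{P}_a\hat{H}_\epsilon,\mathcal{P}_b\hat{H}_\epsilon]\leq 4\kappa_\epsilon^2 f^3_{2(a+b)}(\textbf{0})$ also supplies the dominated-convergence justification that the paper leaves implicit, and your caveat on $\kappa$ is warranted, since for the literal $\ell^1$ balls $\kappa_\epsilon\equiv 1/36$, so both the statement and the paper's \eqref{equ 4.4} tacitly require $\hat{H}_\epsilon$ to have unit mass (e.g.\ via $\ell^\infty$ balls).
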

\proof[Proof of Lemma \ref{lemma 4.2 covariance limit}]
According to the definition of $\mathcal{W}_t^d$, $\mathcal{W}_t^6=(\mathcal{W}_t^3, \tilde{\mathcal{W}}_t^3)$, where $\{\tilde{\mathcal{W}}_t^3\}_{t\geq 0}$ is an independent copy of $\{\mathcal{W}_t\}_{t\geq 0}$. According to the definition of $\hat{H}_\epsilon$, for any $v_1, v_2\in \mathbb{R}^3$ and $r\geq 0$,
\begin{align*}
\mathcal{P}_r\hat{H}_\epsilon(v_1, v_2)
=\frac{1}{(2\epsilon)^3}\mathbb{P}\left(\|\mathcal{W}_{2r}^3+v_1\|_1\leq \epsilon\right)\frac{1}{(2\epsilon)^3}\mathbb{P}\left(\|\tilde{\mathcal{W}}_{2r}^3+v_2-u\|_1\leq \epsilon\right).
\end{align*}
As a result,
\begin{equation}\label{equ 4.4}
\lim_{\epsilon\rightarrow 0}\mathcal{P}_r\hat{H}_\epsilon(v_1, v_2)=f_{2r}^3(v_1)f_{2r}^3(v_2-u).
\end{equation}
By Kolmogorov-Chapman equation, for any $r_1, r_2>0$ and $u\in \mathbb{R}^3$,
\[
\int_{\mathbb{R}^3}f_{r_1}^3(v_1)f_{r_2}^3(v_1-u)dv_1=f^3_{r_1+r_2}(u).
\]
Then, according to \eqref{equ 4.4} and the fact that $\int_{\mathbb{R}^3}f_r^3(v)dv=1$,
\begin{equation}\label{equ 4.5}
\lim_{\epsilon\rightarrow0}\left[\mathcal{P}_{r+r_2-r_1}\hat{H}_\epsilon, \mathcal{P}_r\hat{H}_\epsilon\right]
=2f^3_{2(2r+r_2-r_1)}(\textbf{0})+2f^3_{2(2r+r_2-r_1)}(u)
\end{equation}
for any $0\leq r_1\leq r_2$. According to the definition of $C_3$ and $\zeta_t^u$, Lemma \ref{lemma 4.2 covariance limit} follows from \eqref{equ 4.3} and \eqref{equ 4.5}.
\qed

Now we utilized Lemma \ref{lemma 4.2 covariance limit} to prove the following finite dimensional distribution CLT.
\begin{lemma}\label{lemma 4.3 finite dimensional distribution CLT d=3}
For any $m\geq 1$ and $0<t_1<t_2<\ldots<t_m$, the $\mathbb{R}^m$-valued random variable
\[
\frac{1}{N^{\frac{3}{4}}}\left(\xi_{t_1N}^{\textbf{0}, \sqrt{N}u_N}-t_1N\alpha_{\textbf{0}, \sqrt{N}u_N}(p)
,\ldots, \xi_{t_mN}^{\textbf{0}, \sqrt{N}u_N}-t_mN\alpha_{\textbf{0}, \sqrt{N}u_N}(p)\right)
\]
converges weakly to $C_3\left(\zeta_{t_1}^u, \ldots, \zeta_{t_m}^u\right)$ as $N\rightarrow+\infty$.
\end{lemma}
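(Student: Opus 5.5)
The plan is to approximate the additive functional by time integrals of the quadratic fluctuation field tested against $\hat{H}_\epsilon$, and then pass to the limit in $N$ and $\epsilon$ in turn. Write
\[
\Xi^N_t:=\frac{\xi_{tN}^{\textbf{0}, \sqrt{N}u_N}-tN\alpha_{\textbf{0}, \sqrt{N}u_N}(p)}{N^{3/4}}=\frac{1}{N^{3/4}}\int_0^{tN}\left(\eta_s(\textbf{0})\eta_s(\sqrt{N}u_N)-\alpha_{\textbf{0},\sqrt{N}u_N}(p)\right)ds .
\]
A change of variables $s=rN$ gives $\int_0^t\mathcal{Y}_r^N(\hat{H}_\epsilon)dr$. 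Since $\hat{H}_\epsilon$ averages over roughly $(2\epsilon)^6N^3$ lattice pairs with weight $(2\epsilon)^{-6}$, the normalization is $N^{-11/4}\cdot N\cdot N^{-3}\cdot(\text{number of pairs})$. This matches $N^{1/4}$ times a spatial average of the integrand, which is consistent with the $N^{3/4}$ scaling of $\Xi^N_t$.

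\textbf{Step 1.} For each fixed $\epsilon$, apply Lemma \ref{Lemma 4.1 quadratic fluctuation limit} together with the continuity of the map $\omega\mapsto(\int_0^{t_i}\omega_r(\hat{H}_\epsilon)dr)_{i\le m}$. Since $\hat{H}_\epsilon$ is not Schwartz, I would first smooth it, or control the boundary with an $L^2$ bound. This yields
\[
\left(\int_0^{t_i}\mathcal{Y}_r^N(\hat{H}_\epsilon)dr\right)_{i}\Rightarrow\left(\int_0^{t_i}\mathcal{Y}_r(\hat{H}_\epsilon)dr\right)_{i}.
\]
The limit is a centered Gaussian vector. By Lemma \ref{lemma 4.2 covariance limit}, its covariance converges as $\epsilon\to0$ to that of $C_3(\zeta^u_{t_i})_i$, so the limit vectors converge weakly to $C_3(\zeta_{t_1}^u,\dots,\zeta_{t_m}^u)$.

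\textbf{Step 2.} By a standard approximation theorem (e.g.\ Billingsley, Theorem 3.2), it then suffices to prove
\[
\lim_{\epsilon\to0}\limsup_{N\to\infty}\mathbb{E}_{\nu_p}\left(\Xi^N_t-\int_0^t\mathcal{Y}^N_r(\hat{H}_\epsilon)dr\right)^2=0 .
\]
The difference equals
\[
N^{-3/4}\int_0^{tN}\sum_{z_1,z_2}\left(\eta_s(z_1)\eta_s(z_2)-\alpha_{z_1,z_2}(p)\right)\left(\delta_{(z_1,z_2)=(\textbf{0},\sqrt{N}u_N)}-w^N_\epsilon(z_1,z_2)\right)ds,
\]
where $w^N_\epsilon$ is the normalized uniform weight on the box of pairs. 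By invariance of $\nu_p$, the second moment is a double time integral of covariances ${\rm Cov}_{\nu_p}(\eta_\theta(z_1)\eta_\theta(z_2),\eta_0(v_1)\eta_0(v_2))$ weighted against the signed weights.

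\textbf{Step 3: main obstacle.} The bound \eqref{equ 3.5} only gives absolute values and yields order $N^{3/2}$ with no cancellation, so the mean-square estimate must exploit cancellation. I would use duality exactly. The covariance is computed via coalescing random walks run from $(z_1,z_2)$ for time $\theta$ and then compared at time $0$ with $(v_1,v_2)$. The dominant contribution of order $N^{3/2}$ comes from the event that exactly one walk of the first pair meets one walk of the second pair (cf.\ Lemma \ref{lemma 3.7 time correlation decay}). On that event the covariance is a smooth functional of the walk positions: to leading order it is $p^2\chi$-type constants times heat kernels at scale $\sqrt{N}$, mirroring the computation behind \eqref{equ 4.3}. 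Replacing the single pair $(\textbf{0},\sqrt{N}u_N)$ by its $\epsilon\sqrt{N}$-box average then changes these heat kernels by an error that vanishes as $\epsilon\to0$ uniformly in $N$. This uses the local CLT together with the continuity of $f^3_r$ away from $r=0$. The small-time region $r\lesssim\epsilon^2$ needs separate treatment via the integrability of $\int_0 f^3_{2r}(\textbf{0})dr$ in $d=3$.

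The remaining terms are lower order. These include events where the two walks of the same pair coalesce (probability $O(1/\|\sqrt{N}u\|)=O(N^{-1/2})$), events with two or more cross-meetings, and near-diagonal terms. I would bound them crudely with \eqref{equ 3.5}-type estimates by $o(N^{3/2})$. Combining Steps 1–3 gives the lemma.
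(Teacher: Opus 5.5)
Your proposal is correct and follows the paper's proof essentially step for step. The paper obtains weak convergence of $\bigl(\int_0^{t_i}\mathcal{Y}_r^N(\hat H_\epsilon)dr\bigr)_{i}$ from Lemma \ref{Lemma 4.1 quadratic fluctuation limit}, identifies the $\epsilon\to0$ Gaussian limit through Lemma \ref{lemma 4.2 covariance limit}, and reduces to the $L^2$ estimate \eqref{equ 4.7}. That estimate is handled by the duality computation of Appendix \ref{Appendix A.6}, where only the event of exactly one cross-meeting contributes at order $N^{3/2}$ (there $\hat Y=p^3-p^4$), giving the explicit formula \eqref{equ 4.8}, which vanishes as $\epsilon\to0$.

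Your extra care is well placed on two points. The paper applies Lemma \ref{Lemma 4.1 quadratic fluctuation limit} to the non-Schwartz $\hat H_\epsilon$ without comment. Your reading of $\hat H_\epsilon$ as a normalized box average is also the right fix: with the literal $\ell^1$-balls and prefactor $(2\epsilon)^{-6}$ its total mass is $1/36$, so \eqref{equ 4.4} would be off by that factor. One correction: in the small-time region the integrable quantity is $\theta\mapsto\int_0^{+\infty}f^3_{2(\theta+r)}(\textbf{0})dr\propto\theta^{-1/2}$, not $r\mapsto f^3_{2r}(\textbf{0})$, which is not integrable at $0$.
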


\proof[Proof of Lemma \ref{lemma 4.3 finite dimensional distribution CLT d=3}]
By Lemma \ref{Lemma 4.1 quadratic fluctuation limit}, the $\mathbb{R}^m$-valued random variable
\[
\left(\int_0^{t_1}\mathcal{Y}_r^N(\hat{H}_\epsilon)dr, \ldots, \int_0^{t_m}\mathcal{Y}_r^N(\hat{H}_\epsilon)dr\right)
\]
converges weakly to the $\mathbb{R}^m$-valued Gaussian random variable
\[
\left(\int_0^{t_1}\mathcal{Y}_r(\hat{H}_\epsilon)dr, \ldots, \int_0^{t_m}\mathcal{Y}_r(\hat{H}_\epsilon)dr\right)
\]
as $N\rightarrow+\infty$. Then, by Lemma \ref{lemma 4.2 covariance limit}, to complete this proof we only need to show that
\begin{equation}\label{equ 4.6 replace formula}
\limsup_{\epsilon\rightarrow 0}\limsup_{N\rightarrow+\infty}\mathbb{P}_{\nu_p}\left(\left|\int_0^{t}\mathcal{Y}_r^N(\hat{H}_\epsilon)dr
-\frac{1}{N^{\frac{3}{4}}}\left(\xi_{tN}^{\textbf{0}, \sqrt{N}u_N}-tN\alpha_{\textbf{0}, \sqrt{N}u_N}(p)\right)\right|>\delta\right)=0
\end{equation}
for any $\delta>0$ and $t\geq 0$. According to Chebyshev's inequality, to verify \eqref{equ 4.6 replace formula}, we only need to show that
\begin{equation}\label{equ 4.7}
\limsup_{\epsilon\rightarrow 0}\limsup_{N\rightarrow+\infty}{\rm Var}_{\nu_p}\left(\int_0^{t}\mathcal{Y}_r^N(\hat{H}_\epsilon)dr
-\frac{1}{N^{\frac{3}{4}}}\xi_{tN}^{\textbf{0}, \sqrt{N}u_N}\right)=0
\end{equation}
for any $t\geq 0$. We claim that
\begin{align}\label{equ 4.8}
&\limsup_{N\rightarrow+\infty}{\rm Var}_{\nu_p}\left(\int_0^{t}\mathcal{Y}_r^N(\hat{H}_\epsilon)dr
-\frac{1}{N^{\frac{3}{4}}}\xi_{tN}^{\textbf{0}, \sqrt{N}u_N}\right)\notag\\
&=\frac{2p^3(1-p)K_1}{(2\epsilon)^{12}}\int_0^td\theta_1\int_0^{\theta_1}\left(\int_{D(\epsilon)}\mathcal{J}_{\theta_2}(v_1, v_2, v_3, v_4)dv_1dv_2dv_3dv_4\right)d\theta_2
\end{align}
for $\epsilon$ sufficiently small such that $3\epsilon<\|u\|_1$, where $K_1=\left(\int_0^{+\infty}p_{s, 3}(\textbf{0}, \textbf{0})ds\right)^{-1}$ and $D(\epsilon)\subseteq (\mathbb{R}^3)^4$ is defined as
\[
D(\epsilon)=
\left\{(v_1, v_2, v_3, v_4)\in (\mathbb{R}^3)^4:~\|v_1\|_1\leq \epsilon, \|v_2-u\|_1\leq \epsilon, \|v_3\|_1\leq \epsilon,
\|v_4-u\|_1\leq \epsilon\right\}
\]
and $\mathcal{J}_{\theta_2}: (\mathbb{R}^3)^4\rightarrow \mathbb{R}$ is defined as
\begin{align*}
\mathcal{J}_{\theta_2}(v_1, v_2, v_3, v_4)
=&\sum_{i=1}^2\sum_{j=3}^4\int_0^{+\infty}f^3_{2(\theta_2+r)}(v_i-v_j)dr-2\sum_{j=3}^4\int_0^{+\infty}f_{2(\theta_2+r)}^3(-v_j)dr\\
&-2\sum_{j=3}^4\int_0^{+\infty}f_{2(\theta_2+r)}^3(u-v_j)dr+2\int_0^{+\infty}f_{2(\theta_2+r)}^3(0)dr\\
&+2\int_0^{+\infty}f_{2(\theta_2+r)}^3(-u)dr.
\end{align*}
Since $f_t^3$ is continuous on $\mathbb{R}^3$, Equation \eqref{equ 4.7} follows from \eqref{equ 4.8}.

So we need to verify \eqref{equ 4.8} to complete the proof of Lemma \ref{lemma 4.3 finite dimensional distribution CLT d=3}. The proof of \eqref{equ 4.8} utilizes an argument similar to that leading to \eqref{equ 3.8}, where the duality relationship between the voter model and the coalescing random walk plays the key role. To avoid too many complex details in the main body of this paper, we give the proof of \eqref{equ 4.8} in Appendix \ref{Appendix A.6}.
\qed

Now we prove Theorem \ref{theorem 2.3 main theorem d=3}.
\proof[Proof of Theorem \ref{theorem 2.3 main theorem d=3}]
For any $0\leq s\leq t$, according to the invariance of $\nu_p$ and the bilinear property of the covariance,
\begin{align*}
&\mathbb{E}_{\nu_p}\left(\left(\frac{1}{N^{\frac{3}{4}}}\left(\xi_{tN}^{\textbf{0}, \sqrt{N}u_N}-tN\alpha_{\textbf{0}, \sqrt{N}u_N}(p)\right)-\frac{1}{N^{\frac{3}{4}}}\left(\xi_{sN}^{\textbf{0}, \sqrt{N}u_N}-sN\alpha_{\textbf{0}, \sqrt{N}u_N}(p)\right)\right)^2\right)\\
&=\frac{2}{N^{\frac{3}{2}}}\int_0^{(t-s)N}d\theta_1\int_0^{\theta_1}{\rm Cov}_{\nu_p}\left(\eta_{\theta_2}(\textbf{0})\eta_{\theta_2}(\sqrt{N}u_N), \eta_{0}(\textbf{0})\eta_{0}(\sqrt{N}u_N)\right)d\theta_2.
\end{align*}
Then, by Lemma \ref{lemma 3.7 time correlation decay},
\begin{align*}
&\mathbb{E}_{\nu_p}\left(\left(\frac{1}{N^{\frac{3}{4}}}\left(\xi_{tN}^{\textbf{0}, \sqrt{N}u_N}-tN\alpha_{\textbf{0}, \sqrt{N}u_N}(p)\right)-\frac{1}{N^{\frac{3}{4}}}\left(\xi_{sN}^{\textbf{0}, \sqrt{N}u_N}-sN\alpha_{\textbf{0}, \sqrt{N}u_N}(p)\right)\right)^2\right)\\
&\leq \frac{8}{N^{\frac{3}{2}}}K_1\int_0^{(t-s)N}d\theta_1\int_0^{\theta_1}d\theta_2\int_0^{+\infty}p_{r+\theta_2, 3}(\textbf{0}, \textbf{0})dr.
\end{align*}
According to the local central limit theorem of the simple random walk on $\mathbb{Z}^3$, there exists $K_2<+\infty$ such that $p_{t, 3}(\textbf{0}, \textbf{0})\leq K_2t^{-\frac{3}{2}}$ for all $t\geq 0$. Consequently, for any $0\leq s\leq t$,
\begin{align}\label{equ proof of Thm2.3 1}
&\mathbb{E}_{\nu_p}\left(\left(\frac{1}{N^{\frac{3}{4}}}\left(\xi_{tN}^{\textbf{0}, \sqrt{N}u_N}-tN\alpha_{\textbf{0}, \sqrt{N}u_N}(p)\right)-\frac{1}{N^{\frac{3}{4}}}\left(\xi_{sN}^{\textbf{0}, \sqrt{N}u_N}-sN\alpha_{\textbf{0}, \sqrt{N}u_N}(p)\right)\right)^2\right)\notag\\
&\leq K_3(t-s)^{\frac{3}{2}},
\end{align}
where $K_3=\frac{64}{3}K_1K_2$. By \eqref{equ proof of Thm2.3 1} and Corollary 14.9 of \cite{Kallenberg1997}, the random processes
\[
\left\{\frac{1}{N^{\frac{3}{4}}}\left(\xi_{tN}^{\textbf{0}, \sqrt{N}u_N}-tN\alpha_{\textbf{0}, \sqrt{N}u_N}(p)\right):~0\leq t\leq T\right\}_{N\geq 1}
\]
are tight under the uniform topology of $C[0, T]$. Theorem \ref{theorem 2.3 main theorem d=3} follows from Lemma \ref{lemma 4.3 finite dimensional distribution CLT d=3} and the aforesaid tightness.
\qed

At last, we prove Lemma \ref{Lemma 4.1 quadratic fluctuation limit} by several steps. For any $N\geq 1, t\geq 0$ and Schwartz function $f: \mathbb{R}^6\rightarrow \mathbb{R}$, we denote by $\tilde{\mathcal{M}}_t^{N, f}$ the term
\[
\mathcal{Y}_t^N(f)-\mathcal{Y}_0^N(f)-\int_0^tN\mathcal{L}\mathcal{Y}_s^N(f)ds.
\]
By Dynkin's martingale formula, $\{\tilde{\mathcal{M}}_t^{N, f}\}_{t\geq 0}$ is a martingale. According to the definition of $\mathcal{L}$,
\begin{align*}
N\mathcal{L}\mathcal{Y}_s^N(f)
=N\sum_{z\in \mathbb{Z}^3}\sum_{w\sim z}\Upsilon_s^{N, f}(z, w),
\end{align*}
where
\[
\Upsilon_s^{N, f}(z, w)=\frac{1}{N^{\frac{11}{4}}}\sum_{v_1}\sum_{v_2}
\left(\eta_{sN}^{z, w}(v_1)\eta_{sN}^{z, w}(v_2)-\eta_{sN}(v_1)\eta_{sN}(v_2)\right)f(\frac{v_1}{\sqrt{N}}, \frac{v_2}{\sqrt{N}}).
\]
%According to the fact that $\eta_{sN}^{z, w}(v_1)\eta_{sN}^{z, w}(v_2)-\eta_{sN}(v_1)\eta_{sN}(v_2)=0$ when $\{z\}\cap \{v_1, v_2\}=\emptyset$, it is easy to verify that
%\[
%\Upsilon_s^{N, f}(z, w)=O(N^{-5/4})\int_{\mathbb{R}^3}f(z/\sqrt{N}, u)+f(u, z/\sqrt{N})du+O(N^{-11/4})f(z/\sqrt{N}, z/\sqrt{N}).
%\]
%Then, according to the Taylor's expansion formula up to the second order,
%\begin{align}\label{equ 4.10}
%&N\mathcal{L}G(\mathcal{Y}_s^N(f))\\
%&=N\sum_{z}\sum_{w\sim z}G^\prime(\mathcal{Y}_s^N(f))\Upsilon_s^{N, f}(z, w)
%+\frac{N}{2}\sum_{z}\sum_{w\sim z}G^{\prime\prime}(\mathcal{Y}_s^N(f))\left(\Upsilon_s^{N, f}(z, w)\right)^2+O(N^{-5/4}).\notag
%\end{align}
According to Dynkin's martingale formula, the quadratic variation process $\{\langle \tilde{\mathcal{M}}^{N, f}\rangle_t\}_{t\geq 0}$ of $\{\tilde{\mathcal{M}}_t^{N, f}\}_{t\geq 0}$ is given by
\begin{equation}\label{equ 4.10 minus 1}
\langle \tilde{\mathcal{M}}^{N, f}\rangle_t=\int_0^tN\sum_{z\in \mathbb{Z}^3}\sum_{w\sim z}\left(\Upsilon_s^{N, f}(z, w)\right)^2ds.
\end{equation}
For $\{\langle \tilde{\mathcal{M}}^{N, f}\rangle_t\}_{t\geq 0}$, we have the following lemma.
\begin{lemma}\label{lemma 4.4}
For any $t\geq 0$,
\[
\lim_{N\rightarrow+\infty}\langle \tilde{\mathcal{M}}^{N, f}\rangle_t
=6\chi(p)p^2[f, f]t
\]
in $L^2$.
\end{lemma}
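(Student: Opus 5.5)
The plan is to compute $\Upsilon_s^{N,f}(z,w)$ explicitly, show that the quadratic variation \eqref{equ 4.10 minus 1} reduces to a spatially and temporally averaged density of discordant edges weighted by a deterministic function, and then get the $L^2$ limit from a first-moment computation plus a vanishing variance.

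\textbf{Step 1: explicit form of $\Upsilon$.} The difference $\eta^{z,w}(v_1)\eta^{z,w}(v_2)-\eta(v_1)\eta(v_2)$ vanishes unless $z\in\{v_1,v_2\}$, and using $\eta(z)^2=\eta(z)$ one gets
\[
\Upsilon_s^{N,f}(z,w)=\frac{\eta_{sN}(w)-\eta_{sN}(z)}{N^{\frac{11}{4}}}\Big(\sum_{v\neq z}\eta_{sN}(v)\,g_N(z,v)+f(\tfrac{z}{\sqrt N},\tfrac{z}{\sqrt N})\Big),
\qquad g_N(z,v)=f(\tfrac{z}{\sqrt N},\tfrac{v}{\sqrt N})+f(\tfrac{v}{\sqrt N},\tfrac{z}{\sqrt N}).
\]
Write $\Sigma_N(z)=N^{-3/2}\sum_{v}\eta_{sN}(v)g_N(z,v)$; the diagonal term is then negligible. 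This gives
\[
\langle\tilde{\mathcal M}^{N,f}\rangle_t\approx\int_0^t N^{-\frac32}\sum_{z}\sum_{w\sim z}(\eta_{sN}(w)-\eta_{sN}(z))^2\,\Sigma_N(z)^2\,ds .
\]
The power counting is $N\cdot N^{-11/2}\cdot N^{3}=N^{-3/2}$, which is the right normalization for a Riemann sum over $z\in\mathbb Z^3$ at scale $\sqrt N$.

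\textbf{Step 2: replace $\Sigma_N(z)$ by its deterministic limit.} Set $a(z/\sqrt N)=p\int_{\mathbb R^3}(f+f^{\rm T})(z/\sqrt N,v)\,dv$, where $f^{\rm T}(u,v)=f(v,u)$. I will show
\[
\sup_z\mathbb E_{\nu_p}\big(\Sigma_N(z)-a(z/\sqrt N)\big)^2(1+|z/\sqrt N|)^{k}\to0 .
\]
The mean error is a Riemann-sum error. For the variance, Proposition \ref{Proposition 2.1 invariant distribution} gives ${\rm Cov}_{\nu_p}(\eta(v),\eta(v'))=p(1-p)\Phi(v-v')$, and in $d=3$ we have $\Phi(x)=O(|x|^{-1})$. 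Hence the variance of $\sum_v\eta(v)g_N(z,v)$ is $O(N^{3/2}\cdot N)$, and after dividing by $N^3$ it is $O(N^{-1/2})$. Since $(\eta(w)-\eta(z))^2\le1$ and $f$ is Schwartz (so the weights decay in $z$), Cauchy--Schwarz shows that replacing $\Sigma_N^2$ by $a^2$ costs $o(1)$ in $L^1$. Because all the quantities are bounded, upgrading this to $L^2$ is also routine: $\Sigma_N$ has bounded fourth moments by the same duality covariance bounds.

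\textbf{Step 3: the discordance average.} It remains to show that
\[
A_N:=\int_0^tN^{-\frac32}\sum_z a(z/\sqrt N)^2\sum_{w\sim z}(\eta_{sN}(w)-\eta_{sN}(z))^2ds\to 6\chi(p)t\int a^2 .
\]
By \eqref{equ 2.2 second moment of etax-etay} and invariance, the mean is $6\chi(p)\,t\,N^{-3/2}\sum_z a(z/\sqrt N)^2\to 6\chi(p)t\int a^2$. Since $\int a^2=p^2[f,f]$, this gives the claimed constant $6\chi(p)p^2[f,f]$.

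For the variance, expand it as $\int_0^t\int_0^t\sum_{z,z'}$ of covariances of $(\eta_{sN}(w)-\eta_{sN}(z))^2$ and $(\eta_{s'N}(w')-\eta_{s'N}(z'))^2$. Lemma \ref{lemma 3.7 time correlation decay} with $m=2$ bounds each such covariance by $K\int_{N|s-s'|}^\infty p_{r,3}(\mathbf 0,\mathbf 0)dr=O((N|s-s'|)^{-1/2})$. This is uniform in space, so after the normalization $N^{-3}\sum_{z,z'}$ with summable weights it is $O(1)$ times $\int\int (N|s-s'|)^{-1/2}\wedge1\,ds\,ds'\to0$.

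\textbf{Main obstacle.} I expect the main obstacle to be Step 2. It needs a uniform fourth-moment control of the linear field $\Sigma_N(z)$ under $\nu_p$, so that the product of the discordance indicator with $\Sigma_N^2-a^2$ is negligible in $L^2$ rather than only in $L^1$. I would handle this via the duality bound \eqref{equ 3.5} generalized to four sites (the coalescing walk decay as in Lemma \ref{Lemma 3.3 correlation decay d=4} but now for $d=3$). Failing that, I would prove convergence in $L^1$ and use the uniform boundedness $|\langle\tilde{\mathcal M}^{N,f}\rangle_t|\le C t$, which follows from $|\Sigma_N|\le\|g\|_{L^1}$-type Riemann bounds, to get $L^2$ convergence.
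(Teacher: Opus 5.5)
Your argument is correct, but it is organized differently from the paper's. The paper keeps the quadratic variation in expanded form $N^{-9/2}\sum_{z}\sum_{w\sim z}\sum_{v_1,v_2}(\eta(w)-\eta(z))^2\eta(v_1)\eta(v_2)\,(\cdots)$. It computes the mean by a $d=3$ version of Lemma \ref{Lemma 3.3 correlation decay d=4}: the four-site decorrelation $\mathbb{E}_{\nu_p}((\eta(x)-\eta(y))^2\eta(z)\eta(w))\approx\chi(p)p^2$ for well-separated sites, proved through a coalescing-walk coupling, combined with a nine-dimensional Riemann sum. It then controls the variance with Lemma \ref{lemma 3.7 time correlation decay} for $m=4$, as in the proof of \eqref{equ 3.7 QV variance d=4}.

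You instead first prove a spatial law of large numbers for the linear factor $\Sigma_N(z)$. This needs only the explicit pair correlation $p(1-p)\Phi(v-v')$ from Proposition \ref{Proposition 2.1 invariant distribution} and the decay of $\Phi$, and your bound $\mathrm{Var}\,\Sigma_N(z)=O(N^{-1/2})$ is right. After this replacement the quadratic variation is linear in the discordance indicators. Their mean is exactly $\chi(p)$ by \eqref{equ 2.2 second moment of etax-etay}, and their time decorrelation needs Lemma \ref{lemma 3.7 time correlation decay} only with $m=2$.

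So your route dispenses with the four-point decorrelation entirely and makes it transparent that the randomness of the weight $\Sigma_N$ is irrelevant at this scale. The paper's route instead reuses the correlation-decay machinery already built for the $d=4$ quadratic variation in Section \ref{section three}.

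The obstacle you flag in Step 2 is not real. Since $|\Sigma_N(z)|\le N^{-3/2}\sum_v|g_N(z,v)|\le C(1+|z|/\sqrt N)^{-k}$ deterministically, you get $|\Sigma_N^2-a^2|\le C(1+|z|/\sqrt N)^{-k}|\Sigma_N-a|$. Cauchy--Schwarz on the weighted spatial and time average then upgrades your second-moment estimate directly to $L^2$ convergence, with no fourth-moment or four-site duality bound needed.
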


\proof[Proof of Lemma \ref{lemma 4.4}]
According to the definition of $\Upsilon_s^{N, f}(z, w)$,
\begin{align}\label{equ 4.10}
\Upsilon_s^{N, f}(z, w)
=&\frac{\eta_{sN}(w)-\eta_{sN}(z)}{N^{\frac{11}{4}}}
\Bigg(\sum_{v_2\neq z}\eta_{sN}(v_2)f(\frac{z}{\sqrt{N}}, \frac{v_2}{\sqrt{N}})+\sum_{v_1\neq z}\eta_{sN}(v_1)f(\frac{v_1}{\sqrt{N}}, \frac{z}{\sqrt{N}})\notag\\
&+f(\frac{z}{\sqrt{N}},\frac{z}{\sqrt{N}})\Bigg).
\end{align}
According to the invariance of $\nu_p$, a $d=3$ version of Lemma \ref{Lemma 3.3 correlation decay d=4}
and the facts that each $z\in \mathbb{Z}^3$ has $6$ neighbors and
\[
\lim_{N\rightarrow+\infty}\frac{1}{N^{\frac{9}{2}}}\sum_{z_1, v_1, v_2\in \mathbb{Z}^3}h(z_1/\sqrt{N}, v_1/\sqrt{N}, v_2/\sqrt{N})
=\int_{\mathbb{R}^3}\int_{\mathbb{R}^3}\int_{\mathbb{R}^3}h(u_1, u_2, u_3)du_1du_2du_3
\]
for any integrable $h:\mathbb{R}^9\rightarrow \mathbb{R}$, we have
\begin{equation}\label{equ 4.11}
\lim_{N\rightarrow+\infty}\mathbb{E}_{\nu_p}\left(N\sum_{z}\sum_{w\sim z}\left(\Upsilon_s^{N, f}(z, w)\right)^2
\right)=6\chi(p)p^2[f, f]
\end{equation}
for any $s\geq 0$. Note that the proof of Lemma \ref{Lemma 3.3 correlation decay d=4} is still applicable in the case $d=3$
since the simple random walk on $\mathbb{Z}^3$ is transience.

According to an argument similar to that leading to \eqref{equ 3.7 QV variance d=4}, where Lemma \ref{lemma 3.7 time correlation decay} plays the key role, we have
\begin{equation}\label{equ 4.12}
\lim_{N\rightarrow+\infty}{\rm Var}_{\nu_p}\left(\int_0^tN\sum_{z}\sum_{w\sim z}\left(\Upsilon_s^{N, f}(z, w)\right)^2ds\right)
=0.
\end{equation}
Lemma \ref{lemma 4.4} follows from \eqref{equ 4.11}, \eqref{equ 4.12} and Chebyshev's inequality.
\qed

The following lemma gives the tightness of the quadratic fluctuation density fields.
\begin{lemma}\label{lemma 4.5 fluctuation field tightness}
For any $T>0$, the stochastic processes
\[
\left\{\mathcal{Y}_t^N:~0\leq t\leq T\right\}_{N\geq 1}
\]
are tight under the Skorohod topology.
\end{lemma}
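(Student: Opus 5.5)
By Mitoma's theorem, tightness of $\{\mathcal{Y}^N_\cdot\}$ in $D([0,T],\mathcal{S}'(\mathbb{R}^6))$ follows once we show that, for each fixed Schwartz function $f:\mathbb{R}^6\rightarrow\mathbb{R}$, the real-valued processes $\{\mathcal{Y}^N_t(f):0\leq t\leq T\}_{N\geq1}$ are tight in $D[0,T]$. For fixed $f$ we write
\[
\mathcal{Y}^N_t(f)=\mathcal{Y}^N_0(f)+\int_0^tN\mathcal{L}\mathcal{Y}^N_s(f)ds+\tilde{\mathcal{M}}^{N,f}_t
\]
and show that each of the three terms is tight.

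\emph{Initial term.} By invariance of $\nu_p$ and \eqref{equ 3.5}, $\mathbb{E}_{\nu_p}(\mathcal{Y}_0^N(f))^2$ is bounded by
\[
N^{-11/2}K_1\sum_{z_1,z_2,v_1,v_2}|f||f|\sum_{i,j}\int_0^\infty p_r(z_i,v_j)\,dr .
\]
Since $\int_0^\infty p_r(z,v)\,dr=O(\|z-v\|^{-1})$ in $d=3$, summing over the lattice gives order $N^{-11/2}\cdot N^{9/2}\cdot N^{1}=O(1)$. Hence $\mathcal{Y}^N_0(f)$ is tight.

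\emph{Martingale term.} By \eqref{equ 4.10} and \eqref{equ 4.10 minus 1}, each jump of $\tilde{\mathcal{M}}^{N,f}$ equals some $\Upsilon^{N,f}_s(z,w)$, which is $O(N^{-11/4}\cdot N^{3/2})=O(N^{-5/4})$ uniformly. Lemma \ref{lemma 4.4} gives $\langle\tilde{\mathcal{M}}^{N,f}\rangle_t\rightarrow 6\chi(p)p^2[f,f]t$ in $L^2$. So Theorem 1.4 in Chapter 7 of \cite{Ethier1986} yields convergence of $\tilde{\mathcal{M}}^{N,f}$ to a Brownian motion, and in particular tightness. Alternatively, Aldous' criterion applies directly, since $\langle\tilde{\mathcal{M}}^{N,f}\rangle_{\tau+h}-\langle\tilde{\mathcal{M}}^{N,f}\rangle_\tau\leq K h$ deterministically from the same bound on $\Upsilon$.

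\emph{Drift term.} This is the main obstacle. The idea is to compute $N\mathcal{L}\mathcal{Y}^N_s(f)$ by the duality structure: the generator acting on $\eta(v_1)\eta(v_2)$ is the generator of the two-particle coalescing random walk. For $v_1\neq v_2$, this gives
\[
\mathcal{L}(\eta(v_1)\eta(v_2))=\mathcal{G}_{v_1}+\mathcal{G}_{v_2}
\]
applied to $\eta(v_1)\eta(v_2)$, plus correction terms at $\|v_1-v_2\|_1=1$ coming from the diagonal. Summation by parts then gives
\[
N\mathcal{L}\mathcal{Y}^N_s(f)=\mathcal{Y}^N_s(\Delta_N f)+\mathcal{R}^N_s(f),
\]
where $\Delta_N$ is the discrete Laplacian on $(\mathbb{Z}^3/\sqrt{N})^2$ scaled by $N$, with $\Delta_Nf\rightarrow\Delta f$ in Schwartz seminorms (the constant works out under the paper's normalization of $\mathcal{P}_t$). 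The remainder $\mathcal{R}^N_s(f)$ collects the near-diagonal terms, which look like
\[
N^{-11/4}\cdot N\sum_{z}\sum_{w\sim z}\left(\eta(z)\eta(w)-\alpha_{z,w}(p)\right)\text{-type differences}\times f\left(\tfrac{z}{\sqrt{N}},\tfrac{z}{\sqrt{N}}\right),
\]
together with the deterministic part, which vanishes because $\nu_p$ is invariant.

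To control the remainder we use Lemma \ref{lemma 3.7 time correlation decay} to bound its space-time variance. The first step is to show
\[
\sup_{s\leq T}\mathbb{E}_{\nu_p}\left(\mathcal{Y}^N_s(\Delta_Nf)\right)^2=O(1),
\]
exactly as for the initial term. The second step is to bound $\mathbb{E}_{\nu_p}\bigl(\int_\tau^{\tau+h}\mathcal{R}^N_s(f)ds\bigr)^2$. Here the prefactor is $N^{-7/4}$ times a sum of $O(N^{3/2})$ local terms, and their time correlations decay like $\theta^{-1/2}$ after the time scaling. This should give a bound of order
\[
N^{-7/2}\cdot N^{3/2}\cdot N^{3/2}\cdot N^{1/2}\cdots\rightarrow0,
\]
and I expect this bookkeeping of the near-diagonal remainder to be the delicate point. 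With these bounds, the Cauchy--Schwarz inequality gives
\[
\mathbb{E}\left(\int_s^tN\mathcal{L}\mathcal{Y}^N_r(f)dr\right)^2\leq K(t-s)^2+o(1),
\]
so Kolmogorov--Chentsov (Corollary 14.9 of \cite{Kallenberg1997}) gives tightness of the drift term in $C[0,T]$. Combining the three pieces completes the proof.
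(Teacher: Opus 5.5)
Your proposal follows the paper's proof essentially step for step. The paper also reduces, implicitly via Mitoma, to the real-valued processes $\mathcal{Y}^N_\cdot(f)$. It bounds $\mathbb{E}_{\nu_p}(\mathcal{Y}^N_t(f))^2=O(1)$ using \eqref{equ 3.5} and the decay of $\Phi$ in $d=3$, and controls the martingale part through the deterministic bound $N\sum_z\sum_{w\sim z}(\Upsilon^{N,f}_r(z,w))^2=O(1)$. For the drift it writes $N\mathcal{L}\mathcal{Y}^N_s(f)=\mathcal{Y}^N_s(\Delta_Nf)+\hat{\varepsilon}^N_3$ and finishes with Cauchy--Schwarz. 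The paper uses Aldous' criterion for the drift where you use Kolmogorov--Chentsov, which is immaterial.

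The one step to repair is the remainder, which is not delicate at all. For $v_1\neq v_2$ (neighbours included, since $\eta(v)^2=\eta(v)$), $\mathcal{L}(\eta(v_1)\eta(v_2))$ is exactly the sum of the one-coordinate random walk generators applied to $(a,b)\mapsto\eta(a)\eta(b)$. The only correction therefore sits on the diagonal $v_1=v_2$. It is bounded pointwise, deterministically, by $18N^{-7/4}\sum_z|f(\frac{z}{\sqrt N},\frac{z}{\sqrt N})|=O(N^{-1/4})$, which is just your own count ``$N^{-7/4}$ times $O(N^{3/2})$ local terms''. The deterministic term $N^{-11/4}\sum_{z,v}\alpha_{z,v}(p)\Delta_Nf(\frac{z}{\sqrt N},\frac{v}{\sqrt N})$ does not vanish by itself. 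Taking expectations of the whole identity under the invariant $\nu_p$ shows it equals minus the mean of the diagonal correction, hence is also $O(N^{-1/4})$.

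By contrast, the variance count you display, $N^{-7/2}\cdot N^{3/2}\cdot N^{3/2}\cdot N^{1/2}$, equals $N^{0}$ and gives no decay. Lemma \ref{lemma 3.7 time correlation decay} over macroscopic time separation $|s-s'|$ yields a factor $(N|s-s'|)^{-1/2}$, not $N^{1/2}$; done correctly this gives $O(N^{-1})$, but it is unnecessary. Also, a moment bound of the form $K(t-s)^2+o(1)$ is not admissible in Kolmogorov--Chentsov, which needs $K|t-s|^{1+\beta}$ uniformly in $N$. With the deterministic $O(N^{-1/4})$ bound you get $\mathbb{E}_{\nu_p}\bigl(\int_s^tN\mathcal{L}\mathcal{Y}^N_r(f)dr\bigr)^2\leq K(t-s)^2$ directly, and your argument is complete.
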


\proof[Proof of Lemma \ref{lemma 4.5 fluctuation field tightness}]
By Aldous' criteria, we only need to verify that
\begin{equation}\label{equ 4.12 two}
\lim_{M\rightarrow+\infty}\limsup_{N\rightarrow+\infty}\mathbb{P}_{\nu_p}\left(\left|\mathcal{Y}_t^N(f)\right|\geq M\right)=0
\end{equation}
and
\begin{equation}\label{equ 4.12 three}
\lim_{\delta\rightarrow 0}\limsup_{N\rightarrow+\infty}\sup_{\upsilon\in \tilde{\mathcal{T}}^N, s\leq \delta}
\mathbb{P}_{\nu_p}\left(\left|\mathcal{Y}^N_{\upsilon+s}(f)-\mathcal{Y}^N_{\upsilon}(f)\right|>\epsilon\right)=0
\end{equation}
for any $t\geq 0, \epsilon>0$ and Schwartz function $f: \mathbb{R}^6\rightarrow \mathbb{R}$, where $\tilde{\mathcal{T}}^N$ is the set of stopping times of $\{\eta_{tN}\}_{t\geq 0}$ bounded by $T$.

We first verify \eqref{equ 4.12 two}. According to the definition of $\mathcal{Y}_t^N$,
\begin{align*}
\mathbb{E}_{\nu_p}\left(\left(\mathcal{Y}_t^N(f)\right)^2\right)
&=\frac{1}{N^{6}}
\sum_{v_1}\sum_{v_2}\sum_{w_1}\sum_{w_2}\\
&\text{\quad}\sqrt{N}{\rm Cov}_{\nu_p}\left(\eta_{tN}(v_1)\eta_{tN}(v_2), \eta_{tN}(w_1)\eta_{tN}(w_2)\right)f(\frac{v_1}{\sqrt{N}}, \frac{v_2}{\sqrt{N}})f(\frac{w_1}{\sqrt{N}}, \frac{w_2}{\sqrt{N}}).
\end{align*}
By \eqref{equ 3.5} and the invariance of $\nu_p$,
\[
|{\rm Cov}_{\nu_p}\left(\eta_{tN}(v_1)\eta_{tN}(v_2), \eta_{tN}(w_1)\eta_{tN}(w_2)\right)|
\leq \sum_{i=1}^2\sum_{j=1}^2\Phi(v_i-w_j).
\]
Then, according to the well known conclusion that $\Phi(x)=O(1)\|x\|_1^{2-3}=O(1)\|x\|_1^{-1}$ for the simple random walk on $\mathbb{Z}^3$ (see Proposition 1 in Section 26 of \cite{Spitzer1976}), we have
\begin{align}\label{equ 4.12 four}
\mathbb{E}_{\nu_p}\left(\left(\mathcal{Y}_t^N(f)\right)^2\right)
&\leq O(1)\sum_{i=1}^2\sum_{j=1}^2\int_{(\mathbb{R}^3)^4}\|u_i-\tilde{u}_j\|_1^{-1}|f(u_1, u_2)||f(\tilde{u}_1, \tilde{u}_2)|du_1du_2du_3du_4\notag\\
&=O(1)
\end{align}
for sufficiently large $N$. Equation \eqref{equ 4.12 two} follows from \eqref{equ 4.12 four} and Markov's inequality.

Now we verify \eqref{equ 4.12 three}. We only need to show that
\begin{equation}\label{equ 4.12 five}
\lim_{\delta\rightarrow 0}\limsup_{N\rightarrow+\infty}\sup_{\upsilon\in \tilde{\mathcal{T}}^N, s\leq \delta}
\mathbb{P}_{\nu_p}\left(\left|\tilde{\mathcal{M}}^{N,f}_{\upsilon+s}-\tilde{\mathcal{M}}^{N,f}_{\upsilon}\right|>\epsilon\right)=0
\end{equation}
and
\begin{equation}\label{equ 4.12 six}
\lim_{\delta\rightarrow 0}\limsup_{N\rightarrow+\infty}\sup_{\upsilon\in \tilde{\mathcal{T}}^N, s\leq \delta}
\mathbb{P}_{\nu_p}\left(\left|\int_{\upsilon}^{\upsilon+s}N\mathcal{L}\mathcal{Y}_r^N(f)dr\right|>\epsilon\right)=0.
\end{equation}
By \eqref{equ 4.10 minus 1},
\[
\mathbb{E}_{\nu_p}\left(\left(\tilde{\mathcal{M}}^{N,f}_{\upsilon+s}-\tilde{\mathcal{M}}^{N,f}_{\upsilon}\right)^2\right)
=\mathbb{E}_{\nu_p}\left(\int_{\upsilon}^{\upsilon+s}N\sum_{z\in \mathbb{Z}^3}\sum_{w\sim z}\left(\Upsilon_r^{N, f}(z, w)\right)^2dr\right).
\]
By \eqref{equ 4.10},
\[
|\Upsilon_r^{N, f}(z, w)|\leq \frac{2}{N^{\frac{11}{4}}}
\left(\sum_{v}\left|f(\frac{v}{\sqrt{N}}, \frac{z}{\sqrt{N}})\right|+\sum_{v}\left|f(\frac{z}{\sqrt{N}}, \frac{v}{\sqrt{N}})\right|\right)
\]
and hence
\begin{align*}
N\sum_{z\in \mathbb{Z}^3}\sum_{w\sim z}\left(\Upsilon_r^{N, f}(z, w)\right)^2
&=O(1)\int_{\mathbb{R}^3}\left(\int_{\mathbb{R}^3}|f(u_2, u_1)|+|f(u_1, u_2)|du_2\right)^2du_1\\
&=O(1)
\end{align*}
for sufficiently large $N$. As a result,
\[
\mathbb{E}_{\nu_p}\left(\left(\tilde{\mathcal{M}}^{N,f}_{\upsilon+s}-\tilde{\mathcal{M}}^{N,f}_{\upsilon}\right)^2\right)=O(1)\delta
\]
for $s\leq \delta$ and then \eqref{equ 4.12 five} follows from Markov's inequality.

By \eqref{equ 4.10}, for any Schwartz function $f: \mathbb{R}^6\rightarrow \mathbb{R}$,
\begin{align}\label{equ 4.13}
N\mathcal{L}\mathcal{Y}_s^N(f)
&=\frac{1}{N^{\frac{11}{4}}}\sum_{z}\sum_{v}\eta_{sN}(z)\eta_{sN}(v)\left(N\sum_{w\sim z}\left(f(\frac{w}{\sqrt{N}}, \frac{v}{\sqrt{N}})-f(\frac{z}{\sqrt{N}}, \frac{v}{\sqrt{N}})\right)\right)\notag\\
&\text{\quad}+\frac{1}{N^{\frac{11}{4}}}\sum_{z}\sum_{v}\eta_{sN}(z)\eta_{sN}(v)\left(N\sum_{w\sim v}\left(f(\frac{z}{\sqrt{N}}, \frac{w}{\sqrt{N}})-f(\frac{z}{\sqrt{N}}, \frac{v}{\sqrt{N}})\right)\right)\notag\\
&\text{\quad}+\hat{\varepsilon}_1^N,
\end{align}
where
\[
|\hat{\varepsilon}_1^N|\leq \frac{18}{N^{\frac{7}{4}}}\sum_{z}\left|f(\frac{z}{\sqrt{N}}, \frac{z}{\sqrt{N}})\right|=O(N^{-1/4}).
\]
We denote by $\Delta_N$ the discrete Laplace operator such that
\[
\Delta_Nf(u)=N\sum_{i=1}^{6}\left(f(u+\frac{e_i}{\sqrt{N}})+f(u-\frac{e_i}{\sqrt{N}})-2f(u)\right)
\]
for any $u\in \mathbb{R}^6$ and $f\in C^2(\mathbb{R}^6)$, where
\[
e_j=(0,~\ldots,~0,\mathop 1\limits_{j \text{th}},0,~\ldots,~0)
\]
for $j=1,2,\ldots,6$.
By \eqref{equ 4.13},
\[
N\mathcal{L}\mathcal{Y}_s^N(f)=\mathcal{Y}_s^N(\Delta_Nf)+\hat{\varepsilon}_1^N+\hat{\varepsilon}_2^N,
\]
where
\[
\hat{\varepsilon}_2^N=\frac{1}{N^{\frac{11}{4}}}\sum_{z}\sum_{v}\alpha_{z, v}(p)\Delta_Nf(\frac{z}{\sqrt{N}}, \frac{v}{\sqrt{N}}).
\]
According to the invariance of $\nu_p$ and the definition of $\alpha_{z, v}(p)$,
\[
\mathbb{E}_{\nu_p}N\mathcal{L}\mathcal{Y}_s^N(f)=\mathbb{E}_{\nu_p}\mathcal{Y}_s^N(\Delta_Nf)=0
\]
and consequently $\hat{\varepsilon}_2^N=O(N^{-1/4})$. Therefore,
\begin{equation}\label{equ 4.14}
N\mathcal{L}\mathcal{Y}_s^N(f)=\mathcal{Y}_s^N(\Delta_Nf)+\hat{\varepsilon}_3^N,
\end{equation}
where $|\hat{\varepsilon}_3^N|\leq O(1)N^{-1/4}$. By \eqref{equ 4.12 four} and \eqref{equ 4.14}, to verify \eqref{equ 4.12 six}, we only need to show that
\begin{equation}\label{equ 4.12 seven}
\lim_{\delta\rightarrow 0}\limsup_{N\rightarrow+\infty}\sup_{\upsilon\in \tilde{\mathcal{T}}^N, s\leq \delta}
\mathbb{P}_{\nu_p}\left(\left|\int_{\upsilon}^{\upsilon+s}\mathcal{Y}_r^N(\Delta f)dr\right|>\epsilon\right)=0.
\end{equation}
By Cauchy-Schwarz inequality, for $s\leq \delta$ and $\upsilon\in \tilde{\mathcal{T}}^N$,
\begin{align*}
&\mathbb{E}_{\nu_p}\left|\int_{\upsilon}^{\upsilon+s}\mathcal{Y}_r^N(\Delta f)dr\right|
=\mathbb{E}_{\nu_p}\left|\int_0^{T+\delta}1_{\{\upsilon\leq r\leq \upsilon+s\}}\mathcal{Y}_r^N(\Delta f)dr\right|\\
&\leq \mathbb{E}_{\nu_p}\left(\sqrt{\int_0^{T+\delta}1_{\{\upsilon\leq r\leq \upsilon+\delta\}}dr}\sqrt{\int_0^{T+\delta}\left(\mathcal{Y}_r^N(\Delta f)\right)^2dr}\right)\\
&\leq \sqrt{\mathbb{E}_{\nu_p}\int_0^{T+\delta}1_{\{\upsilon\leq r\leq \upsilon+\delta\}}dr}\sqrt{\mathbb{E}_{\nu_p}\int_0^{T+\delta}\left(\mathcal{Y}_r^N(\Delta f)\right)^2dr}.
\end{align*}
Then, by \eqref{equ 4.12 four},
\begin{align*}
&\mathbb{E}_{\nu_p}\left|\int_{\upsilon}^{\upsilon+s}\mathcal{Y}_r^N(\Delta f)dr\right|
\leq O(1)\sqrt{\mathbb{E}_{\nu_p}\int_0^{T+\delta}1_{\{\upsilon\leq r\leq \upsilon+\delta\}}dr}\\
&=O(1)\sqrt{\mathbb{E}_{\nu_p}\int_\upsilon^{\upsilon+s}1dr}\leq O(1)\sqrt{\delta}
\end{align*}
and hence \eqref{equ 4.12 seven} follows from Markov's inequality.
\qed

At last, we prove Lemma \ref{Lemma 4.1 quadratic fluctuation limit}
\proof[Proof of Lemma \ref{Lemma 4.1 quadratic fluctuation limit}]
Let $\{\hat{\mathcal{Y}}_t\}_{0\leq t\leq T}$ be a weak limit of a subsequence of $\{\mathcal{Y}_t^N:~0\leq t\leq T\}_{N\geq 1}$ under the Skorohod topology. Note that the existence of $\hat{\mathcal{Y}}$ follows from Lemma \ref{lemma 4.5 fluctuation field tightness}. For simplicity, we still denote the aforesaid subsequence by $\{\mathcal{Y}_t^N:~0\leq t\leq T\}_{N\geq 1}$.
By \eqref{equ 4.12 four} and \eqref{equ 4.14},  for any $t\geq 0$,  $\tilde{\mathcal{M}}_t^{N, f}$ converges weakly to
\[
\hat{\mathcal{Y}}_t(f)-\hat{\mathcal{Y}}_0(f)-\int_0^t\hat{\mathcal{Y}}_s(\Delta f)ds
\]
as $N\rightarrow+\infty$. Then, by Lemma \ref{lemma 4.4} and Theorem 1.4 in Chapter 7 of \cite{Ethier1986},
\[
\left\{\hat{\mathcal{Y}}_t(f)-\hat{\mathcal{Y}}_0(f)-\int_0^t\hat{\mathcal{Y}}_s(\Delta f)ds\right\}_{t\geq 0}
\]
is a martingale with quadratic variation process $\left\{6\chi(p)p^2[f, f]t\right\}_{t\geq 0}$. As a result, $\{\hat{\mathcal{Y}}_t\}_{t\geq 0}$ is a solution to Equation \eqref{equ 4.1 generalized O-U} with $C=\sqrt{6\chi(p)p^2}$. According to the invariance of $\nu_p$, $\{\hat{\mathcal{Y}}_t\}_{t\geq 0}$ starts from an invariant distribution and hence $\hat{\mathcal{Y}}_0$ is distributed with $\Lambda_C$. In conclusion, the proof is complete.
\qed

\appendix{}
\section{Appendix}
\subsection{Duality}\label{Appendix A.1}
In this appendix, we recall the duality relationship between the voter model and the coalescing random walk. For integer $m\geq 1$, $x_1, \ldots, x_m\in \mathbb{Z}^d$ and $t_1, t_2, \ldots, t_m\geq 0$, the coalescing random walk
\[
\left\{\left(\mathcal{X}_{t, 1}^{x_1, t_1}, \mathcal{X}_{t, 2}^{x_2, t_2}, \ldots, \mathcal{X}_{t, m}^{x_m, t_m}\right)\right\}_{t\geq 0}
\]
is defined as follows. For each $1\leq i\leq m$ and $0\leq t\leq t_i$, $\mathcal{X}_{t, i}^{x_i, t_i}=x_i$.  For each $1\leq i\leq m$ and $t\geq t_i$, $\{\mathcal{X}_{t+t_i}^{x_i, t_i}\}_{t\geq 0}$ is a copy of the simple random walk $\{X_t\}_{t\geq 0}$ starting from $x_i$ with generator given by \eqref{equ 2.1 generator of Random Walk}. So, we call $\mathcal{X}_{t, i}^{x_i, t_i}$ dormant when $t\leq t_i$ and call $\mathcal{X}_{t, i}^{x_i, t_i}$ active when $t>t_i$. All the active walks evolve independently until collisions occur. When two active walks collide with each other, they are coalesced into a single active walk. As a result, for any $B\subseteq \{1, 2, \ldots, m\}$, let
\[
A_t^B=\left\{\mathcal{X}_{t, i}^{x_i, t_i}:~i\in B\right\}\subseteq \mathbb{Z}^d
\]
and $\hat{t}_B=\max\{t_i:~i\in B\}$, then $\{A_{t+\hat{t}_B}^B\}_{t\geq 0}$ is a continuous-time Markov process with state space
\[
2^{\mathbb{Z}^d}:=\{A:~A\subseteq \mathbb{Z}^d\}
\]
and generator $\mathcal{Q}$ given by
\[
\mathcal{Q}f(A)=\sum_{x\in A}\sum_{y\sim x}\left(f\left(\left(A\setminus\{x\}\right)\cup \{y\}\right)-f(A)\right)
\]
for any $A\subseteq \mathbb{Z}^d$ and $f: 2^{\mathbb{Z}^d}\rightarrow \mathbb{R}$.

We denote by $\tilde{\mathbb{P}}$ the probability measure of the coalescing random walk and be $\tilde{\mathbb{E}}$ the expectation with respect to $\tilde{\mathbb{P}}$.

The following proposition, which is equivalent to Theorem 2.12 of \cite{Holley1975}, gives the duality relationship between the voter model and the coalescing random walk.

\begin{proposition}\label{Proposition A.1 dual}(Holley and Liggett, \cite{Holley1975})
For any integer $m\geq 1$, $x_1, \ldots, x_m\in \mathbb{Z}^d$, $\eta\in \{0, 1\}^{\mathbb{Z}^d}$, $t_1, \ldots, t_m\geq 0$, $s\geq \max\{t_1,\ldots, t_m \}$ and $H: \{0, 1\}^m\rightarrow \mathbb{R}$,
\begin{align*}
&\mathbb{E}_\eta H\left(\eta_{t_1}(x_1), \ldots, \eta_{t_m}(x_m)\right)\\
&=\tilde{\mathbb{E}}H\left(\eta\left(\mathcal{X}_{s, 1}^{x_1, s-t_1}\right), \eta\left(\mathcal{X}_{s, 2}^{x_2, s-t_2}\right), \ldots, \eta\left(\mathcal{X}_{s, m}^{x_m, s-t_m}\right)\right).
\end{align*}
\end{proposition}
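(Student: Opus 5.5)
The plan is to prove the identity by realizing the voter model and the coalescing random walk on one probability space through Harris' graphical representation, and then reading the dual walks off the voter model's space-time diagram backwards in time.

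\textbf{Construction.} For every ordered pair $(x,y)$ with $x\sim y$, take an independent Poisson process $N^{x,y}$ of rate $1$ on $[0,+\infty)$. At each event time $r$ of $N^{x,y}$, draw an arrow from $(y,r)$ to $(x,r)$; it means that at time $r$ the vertex $x$ adopts the current opinion of $y$. Given $\eta_0=\eta$, define $\eta_t$ by following these instructions. The first step is to check that this is well defined on the infinite lattice and has generator $\mathcal{L}$ in \eqref{equ 1.1 generator of the voter model}. I would use the standard Harris percolation argument: choose $\delta>0$ small enough that, almost surely, the clusters of $\mathbb{Z}^d$ connected by arrows during $[k\delta,(k+1)\delta]$ are all finite. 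On each such time slab the dynamics then reduces to finitely many sites, and matching jump rates gives the generator. Because every $\eta_{t_i}$ is a deterministic function of $\eta$ and the same Poisson clocks, all values at different times are realized jointly. This is why the statement can mix times $t_1,\dots,t_m$.

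\textbf{Ancestral paths.} Fix $s\geq\max_i t_i$. For each $i$, trace back from the space-time point $(x_i,t_i)$: move down in time, and whenever an arrow points into the current site, jump to the arrow's tail. When time $0$ is reached one sits at some site $Y_i$. By construction $\eta_{t_i}(x_i)=\eta(Y_i)$ for every $i$ simultaneously, and hence
\[
H(\eta_{t_1}(x_1),\ldots,\eta_{t_m}(x_m))=H(\eta(Y_1),\ldots,\eta(Y_m)).
\]
Now reparametrize by reversed time $u=s-r$ for $r\in[0,s]$. The path of index $i$ is then frozen at $x_i$ for $u\leq s-t_i$, is active on $[s-t_i,s]$, and ends at $Y_i$ when $u=s$.

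\textbf{Identification with the coalescing walk.} The main step is to show that, in reversed time, the family of traced paths has the law of the coalescing random walk $(\mathcal{X}^{x_i,s-t_i}_{u,i})_{0\leq u\leq s}$. Two facts are needed.
\begin{itemize}
\item The restriction of the Poisson processes to $[0,s]$, viewed under $r\mapsto s-r$, is again a family of independent rate-$1$ Poisson processes. So a single active path jumps from $z$ to each neighbour $y$ at rate $1$, which is the generator $\mathcal{G}$ in \eqref{equ 2.1 generator of Random Walk}.
\item Distinct active paths at distinct sites read disjoint sets of Poisson processes, so they move independently until they meet. Once two paths occupy the same site, they use the same arrows from then on, so they coalesce.
\end{itemize}
To make this rigorous I would use the strong Markov property of the Poisson system in reversed time. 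Between successive jump or activation times of the finitely many paths, the set of occupied sites evolves as the Markov process with generator $\mathcal{Q}$ from Appendix~\ref{Appendix A.1}. New paths are added at the deterministic activation times $s-t_i$, and a path activated at a site that is already occupied coalesces immediately, which matches the definition of the coalescing walk. I expect this step to be the main obstacle: it requires carefully handling the time reversal together with the staggered dormant periods, and verifying the memorylessness at the stopping times. Once the identification holds, $(Y_1,\ldots,Y_m)$ has the law of $(\mathcal{X}^{x_1,s-t_1}_{s,1},\ldots,\mathcal{X}^{x_m,s-t_m}_{s,m})$. Taking expectations in the displayed identity then gives Proposition~\ref{Proposition A.1 dual}.
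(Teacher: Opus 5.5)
Your proof is correct, and it takes a genuinely different route from the paper, which gives no argument of its own. The paper quotes Theorem 2.12 of Holley and Liggett and simply asserts that the multi-time identity with dormant periods is \emph{equivalent} to it.

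Unpacking that ``equivalent'' takes some work. You would order the distinct times, condition successively at them using the Markov property of the voter model, and apply a one-time duality at each stage. A general $H$ is handled through the span of the products $\prod_{i\in B}\eta(x_i)$. Finally, you would recognise the iterated expectation, via the Markov property of the dual, as a coalescing walk whose $i$-th member sleeps until time $s-t_i$.

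Your Harris construction produces all times at once. The dormant period of walk $i$ is just the reversed-time stretch before $(x_i,t_i)$ is reached. The immediate coalescence of a walk activated on an occupied site appears automatically; this is the convention the paper relies on when it defines collision times such as $a_{k,l}$ or $\Gamma_{ij}$ with $t$ allowed to equal the activation times. You also obtain an almost sure coupling rather than only an identity in law.

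The price is the infinite-volume construction and its identification with the process generated by $\mathcal{L}$. That identification includes showing the graphical process is Markov with that semigroup, which is what makes its multi-time laws those of the voter model. You also need the time-reversal and strong Markov bookkeeping you flagged.

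One point of precision in your last step: since $H$ sees labels, you should identify the law of the labelled vector of positions, with each occupied site moving all labels present there together. The occupied-set process with generator $\mathcal{Q}$ alone does not record which labels sit where. Your strong Markov argument, applied to the labelled paths, gives exactly the labelled coalescing walk.
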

Proposition \ref{Proposition A.1 dual} will be repeatedly utilized in this paper. Here we utilize Proposition \ref{Proposition A.1 dual} to verify Proposition \ref{Proposition 2.1 invariant distribution}. For any $x_1, \ldots, x_m\in \mathbb{Z}^d$, by Proposition \ref{Proposition A.1 dual},
\[
\mathbb{P}_\eta\left(\eta_t(x_i)=1\text{~for all~}1\leq i\leq m\right)
=\tilde{\mathbb{E}}\left(\prod_{i=1}^m1_{\{\eta\left(\mathcal{X}_{t, i}^{x_i, 0}\right)=1\}}\right),
\]
where $1_A$ is the indicator function of the event $A$. Hence,
\begin{align*}
\mathbb{P}_{\mu_p}\left(\eta_t(x_i)=1\text{~for all~}1\leq i\leq m\right)
&=\tilde{\mathbb{E}}\left(\mathbb{E}_{\mu_p}\left(\prod_{i=1}^m1_{\{\eta_0\left(\mathcal{X}_{t, i}^{x_i, 0}\right)=1\}}\right)\right)\\
&=\tilde{\mathbb{E}}\left(p^{{\rm card}\{\mathcal{X}_{t, 1}^{x_1, 0}, \ldots, \mathcal{X}_{t, m}^{x_m, 0}\}}\right),
\end{align*}
where ${\rm card}(A)$ the cardinality of the set $A$. Consequently, the existence of $\nu_p$ follows from the fact that ${\rm card}\{\mathcal{X}_{t, 1}^{x_1, 0}, \ldots, \mathcal{X}_{t, m}^{x_m, 0}\}$ is decreasing in $t$. Furthermore,
\begin{equation}\label{equ A.1}
\nu_p(\eta:~\eta(x_i)=1\text{~for all~}1\leq i\leq m)=\tilde{\mathbb{E}}p^{\lim_{t\rightarrow+\infty}{\rm card}\{\mathcal{X}_{t, 1}^{x_1, 0}, \ldots, \mathcal{X}_{t, m}^{x_m, 0}\}}.
\end{equation}
Applying \eqref{equ A.1} for $m=1$, we have
\[
\nu_p\left(\eta:~\eta(x)=1\right)=p
\]
since ${\rm card}\{\mathcal{X}_{t, 1}^{x, 0}\}\equiv 1$. Applying \eqref{equ A.1} for $m=2$, we have
\[
\nu_p\left(\eta:~\eta(x)=\eta(z)=1\right)=p\Phi(x-z)+p^2\left(1-\Phi(x-z)\right)
\]
since
\[
\tilde{\mathbb{P}}\left(\lim_{t\rightarrow+\infty}{\rm card}\{\mathcal{X}_{t, 1}^{x, 0}, \mathcal{X}_{t, 2}^{z, 0}\}=1\right)=\Phi(x-z)
\]
and
\[
\tilde{\mathbb{P}}\left(\lim_{t\rightarrow+\infty}{\rm card}\{\mathcal{X}_{t, 1}^{x, 0}, \mathcal{X}_{t, 2}^{z, 0}\}=2\right)=1-\Phi(x-z).
\]

\subsection{Proof of \eqref{equ 3.5}}\label{Appendix A.2}

\proof[Proof of \eqref{equ 3.5}]
According to the definition of $\nu_p$,
\[
{\rm Cov}_{\nu_p}\left(\eta_\theta(z_1)\eta_\theta(z_2), \eta_0(v_1)\eta_0(v_2)\right)=\lim_{t\rightarrow+\infty}{\rm Cov}_{\mu_p}\left(\eta_{\theta+t}(z_1)\eta_{\theta+t}(z_2), \eta_t(v_1)\eta_t(v_2)\right)
\]
By Proposition \ref{Proposition A.1 dual},
\begin{align*}
&{\rm Cov}_{\mu_p}\left(\eta_{\theta+t}(z_1)\eta_{\theta+t}(z_2), \eta_t(v_1)\eta_t(v_2)\right)\\
&=\mathbb{E}_{\mu_p}\left(\eta_{\theta+t}(z_1)\eta_{\theta+t}(z_2)\eta_t(v_1)\eta_t(v_2)\right)
-\mathbb{E}_{\mu_p}\left(\eta_{\theta+t}(z_1)\eta_{\theta+t}(z_2)\right)\mathbb{E}_{\mu_p}\left(\eta_t(v_1)\eta_t(v_2)\right)\\
&={\rm \uppercase\expandafter{\romannumeral1}}-{\rm \uppercase\expandafter{\romannumeral2}},
\end{align*}
where
\[
{\rm \uppercase\expandafter{\romannumeral1}}=\tilde{\mathbb{E}}
\left(\mathbb{E}_{\mu_p}\left(\prod_{i=1}^2\eta_0(\mathcal{X}_{t+\theta, i}^{z_i, 0})\prod_{j=3}^4\eta_0(\mathcal{X}_{t+\theta, j}^{v_{j-2}, \theta})\right)\right)
\]
and
\[
{\rm \uppercase\expandafter{\romannumeral2}}=\tilde{\mathbb{E}}
\left(\mathbb{E}_{\mu_p}\left(\prod_{i=1}^2\eta_0(\mathcal{X}_{t+\theta, i}^{z_i, 0})\right)\right)\tilde{\mathbb{E}}
\left(\mathbb{E}_{\mu_p}\left(\prod_{j=1}^2\eta_0(\mathcal{X}_{t+\theta, j}^{v_j, \theta})\right)\right).
\]
According to the definition of the coalescing random walk, we can further assume that $\left\{\left(\mathcal{X}_{t, 1}^{v_1, \theta}, \mathcal{X}_{t, 2}^{v_2, \theta}\right)\right\}_{t\geq 0}$ and
\[
\left\{\left(\mathcal{X}_{t, 1}^{z_1, 0}, \mathcal{X}_{t, 2}^{z_2, 0}, \mathcal{X}_{t, 3}^{v_1, \theta}, \mathcal{X}_{t, 4}^{v_2, \theta}\right)\right\}_{t\geq 0}
\]
are defined in the same probability space such that $\left\{\left(\mathcal{X}_{t, 1}^{v_1, \theta}, \mathcal{X}_{t, 2}^{v_2, \theta}\right)\right\}_{t\geq 0}$ are independent of $\left\{\left(\mathcal{X}_{t, 1}^{z_1, 0}, \mathcal{X}_{t, 2}^{z_2, 0}\right)\right\}_{t\geq 0}$ and
\[
\left(\mathcal{X}_{t, 1}^{v_1, \theta}, \mathcal{X}_{t, 2}^{v_2, \theta}\right)=\left(\mathcal{X}_{t, 3}^{v_1, \theta}, \mathcal{X}_{t, 4}^{v_2, \theta}\right)
\]
for $t\leq \hat{\beta}$, where
\[
\hat{\beta}=\inf\left\{t:~t\geq \theta, \left\{\mathcal{X}_{t, 1}^{z_1, 0}, \mathcal{X}_{t, 2}^{z_2, 0}\right\}\bigcap \left\{\mathcal{X}_{t, 1}^{v_1, \theta}, \mathcal{X}_{t, 2}^{v_2, \theta}\right\}\neq \emptyset\right\}.
\]
Then, according to our assumption,
\[
{\rm \uppercase\expandafter{\romannumeral2}}
=\tilde{\mathbb{E}}\left(\mathbb{E}_{\mu_p}\left(\prod_{i=1}^2\eta_0(\mathcal{X}_{t+\theta, i}^{z_i, 0})\right)\mathbb{E}_{\mu_p}\left(\prod_{j=1}^2\eta_0(\mathcal{X}_{t+\theta, j}^{v_j, \theta})\right)\right).
\]
On the event $\hat{\beta}>t+\theta$,
\begin{align*}
\mathbb{E}_{\mu_p}\left(\prod_{i=1}^2\eta_0(\mathcal{X}_{t+\theta, i}^{z_i, 0})\prod_{j=3}^4\eta_0(\mathcal{X}_{t+\theta, j}^{v_{j-2}, \theta})\right)
&=\mathbb{E}_{\mu_p}\left(\prod_{i=1}^2\eta_0(\mathcal{X}_{t+\theta, i}^{z_i, 0})\prod_{j=1}^2\eta_0(\mathcal{X}_{t+\theta, j}^{v_j, \theta})\right)\\
&=\mathbb{E}_{\mu_p}\left(\prod_{i=1}^2\eta_0(\mathcal{X}_{t+\theta, i}^{z_i, 0})\right)\mathbb{E}_{\mu_p}\left(\prod_{j=1}^2\eta_0(\mathcal{X}_{t+\theta, j}^{v_j, \theta})\right)
\end{align*}
since $\left\{\mathcal{X}_{t+\theta, 1}^{z_1, 0}, \mathcal{X}_{t+\theta, 2}^{z_2, 0}\right\}\bigcap \left\{\mathcal{X}_{t+\theta, 1}^{v_1, \theta}, \mathcal{X}_{t+\theta, 2}^{v_2, \theta}\right\}=\emptyset$ and $\mu_p$ is a product measure.

Consequently,
\[
\left|{\rm \uppercase\expandafter{\romannumeral1}}-{\rm \uppercase\expandafter{\romannumeral2}}\right|\leq \tilde{\mathbb{P}}(\hat{\beta}\leq t+\theta)\leq
\tilde{\mathbb{P}}(\hat{\beta}<+\infty).
\]
According to the definition of $\hat{\beta}$, $\hat{\beta}=\inf\{\beta_{11}, \beta_{12}, \beta_{21}, \beta_{22}\}$, where
\[
\beta_{ij}=\inf\{t:~t\geq \theta, \mathcal{X}_{t, i}^{z_i, 0}=\mathcal{X}_{t, j}^{v_j, \theta}\}
\]
for $i, j\in \{1, 2\}$. According to the total probability formula,
\[
\tilde{\mathbb{P}}(\beta_{ij}<+\infty)=\sum_{w\in \mathbb{Z}^4}p_\theta(z_i, w)\Phi(w-v_j).
\]
According to the strong Markov property of the simple random walk,
\[
\Phi(w-v_j)=K_1\int_0^{+\infty}p_r(w, v_j)dr
\]
and hence
\[
\sum_{w\in \mathbb{Z}^4}p_\theta(z_i, w)\Phi(w-v_j)=K_1\int_0^{+\infty}p_{r+\theta}(z_i, v_j)dr.
\]
As a result,
\begin{align*}
\left|{\rm \uppercase\expandafter{\romannumeral1}}-{\rm \uppercase\expandafter{\romannumeral2}}\right|\leq \tilde{\mathbb{P}}(\hat{\beta}\leq t+\theta)&\leq
\tilde{\mathbb{P}}(\hat{\beta}<+\infty)\\
&\leq \sum_{i=1}^2\sum_{j=1}^2\tilde{\mathbb{P}}(\beta_{ij}<+\infty)\leq K_1\sum_{i=1}^2\sum_{j=1}^2\int_0^{+\infty}p_{r+\theta}(z_i, v_j)dr
\end{align*}
and \eqref{equ 3.5} holds.
\qed

\subsection{Proof of Lemma \ref{Lemma 3.3 correlation decay d=4}}\label{Appendix A.3}
\proof[Proof of Lemma \ref{Lemma 3.3 correlation decay d=4}]
We only deal with the $\mathbb{E}_{\nu_p}\left((\eta_0(x)-\eta_0(y))^2\eta_0(z)\eta_0(w)\right)$ part since the other part follows from a similar argument. According to the definition of $\nu_p$,
\[
\mathbb{E}_{\nu_p}\left((\eta_0(x)-\eta_0(y))^2\eta_0(z)\eta_0(w)\right)
=\lim_{t\rightarrow+\infty}\mathbb{E}_{\mu_p}\left((\eta_t(x)-\eta_t(y))^2\eta_t(z)\eta_t(w)\right)
\]
By Proposition \ref{Proposition A.1 dual},
\[
\mathbb{E}_{\mu_p}\left((\eta_t(x)-\eta_t(y))^2\eta_t(z)\eta_t(w)\right)
=\tilde{\mathbb{E}}\mathbb{E}_{\mu_p}\left((\eta_0(\mathcal{X}_{t, 1}^{x, 0})-\eta_0(\mathcal{X}_{t, 2}^{y, 0}))^2\eta_0(\mathcal{X}_{t, 3}^{z, 0})\eta_0(\mathcal{X}_{t, 4}^{w, 0})\right).
\]
As in the proof of Equation \eqref{equ 3.5}, we can assume that $\left\{\left(\mathcal{X}_{t, 1}^{x, 0}, \mathcal{X}_{t, 2}^{y, 0}, \mathcal{X}_{t, 3}^{z, 0}, \mathcal{X}_{t, 4}^{w, 0}\right)\right\}_{t\geq 0}$ and $\{\mathcal{X}_{t, 1}^{z, 0}\}_{t\geq 0}$, $\{\mathcal{X}_{t, 1}^{w, 0}\}_{t\geq 0}$ are defined in the same probability space such that
\[
\left\{\left(\mathcal{X}_{t, 1}^{x, 0}, \mathcal{X}_{t, 2}^{y, 0}\right)\right\}_{t\geq 0}, \{\mathcal{X}_{t, 1}^{z, 0}\}_{t\geq 0}
\text{~and~}\{\mathcal{X}_{t, 1}^{w, 0}\}_{t\geq 0}
\]
are independent and
\[
\left(\mathcal{X}_{t, 3}^{z, 0}, \mathcal{X}_{t, 4}^{w, 0}\right)=\left(\mathcal{X}_{t, 1}^{z, 0}, \mathcal{X}_{t, 1}^{w, 0}\right)
\]
for $t\leq \hat{\vartheta}$, where
\[
\hat{\vartheta}=\inf\{\vartheta_1, \vartheta_2, \vartheta_3\}
\]
and
\[
\vartheta_1=\inf\left\{t:~\left\{\mathcal{X}_{t, 1}^{x, 0}, \mathcal{X}_{t, 2}^{y, 0}\right\}\bigcap\{\mathcal{X}_{t, 1}^{z, 0}\}\neq \emptyset\right\},
\]
\[
\vartheta_2=\inf\left\{t:~\left\{\mathcal{X}_{t, 1}^{x, 0}, \mathcal{X}_{t, 2}^{y, 0}\right\}\bigcap\{\mathcal{X}_{t, 1}^{w, 0}\}\neq \emptyset\right\}
\]
and
\[
\vartheta_3=\inf\left\{t:~\mathcal{X}_{t, 1}^{w, 0}=\mathcal{X}_{t, 1}^{z, 0}\right\}.
\]
Then, according to an argument similar to that in the verification of \eqref{equ 3.5},
\begin{align*}
&\left|\tilde{\mathbb{E}}\mathbb{E}_{\mu_p}\left((\eta_0(\mathcal{X}_{t, 1}^{x, 0})-\eta_0(\mathcal{X}_{t, 2}^{y, 0}))^2\eta_0(\mathcal{X}_{t, 3}^{z, 0})\eta_0(\mathcal{X}_{t, 4}^{w, 0})\right)
-{\rm \uppercase\expandafter{\romannumeral3}}\right|\\
& \leq \tilde{\mathbb{P}}\left(\hat{\vartheta}<+\infty\right)\leq \Phi(x-z)+\Phi(x-w)+\Phi(y-z)+\Phi(y-w)+\Phi(z-w),
\end{align*}
where
\[
{\rm \uppercase\expandafter{\romannumeral3}}=
\tilde{\mathbb{E}}\left(\mathbb{E}_{\mu_p}\left((\eta_0(\mathcal{X}_{t, 1}^{x, 0})-\eta_0(\mathcal{X}_{t, 2}^{y, 0}))^2\right)\right)
\tilde{\mathbb{E}}\left(\mathbb{E}_{\mu_p}\eta_0(\mathcal{X}_{t, 1}^{z, 0})\right)\tilde{\mathbb{E}}\left(\mathbb{E}_{\mu_p}\eta_0(\mathcal{X}_{t, 1}^{w, 0})\right).
\]
For $x\sim y$, by Proposition \ref{Proposition A.1 dual},
\begin{align*}
{\rm \uppercase\expandafter{\romannumeral3}}&
=\mathbb{E}_{\mu_p}\left(\left(\eta_t(x)-\eta_t(y)\right)^2\right)\mathbb{E}_{\mu_p}(\eta_t(z))\mathbb{E}_{\mu_p}(\eta_t(w))
\end{align*}
and hence
\[
\lim_{t\rightarrow+\infty}{\rm \uppercase\expandafter{\romannumeral3}}=\mathbb{E}_{\nu_p}\left(\left(\eta_0(x)-\eta_0(y)\right)^2\right)\mathbb{E}_{\nu_p}(\eta_0(z))\mathbb{E}_{\nu_p}(\eta_0(w))
=p^2\chi(p).
\]
As a result, for $x\sim y$,
\begin{align*}
&\left|\mathbb{E}_{\nu_p}\left((\eta_0(x)-\eta_0(y))^2\eta_0(z)\eta_0(w)\right)-\chi(p)p^2\right|\\
&\leq \Phi(x-z)+\Phi(x-w)+\Phi(y-z)+\Phi(y-w)+\Phi(z-w).
\end{align*}
Since the simple random walk on $\mathbb{Z}^4$ is transient, for any $\epsilon>0$, there exists $M_1=M_1(\epsilon)<+\infty$ such that
$\Phi(z)\leq \frac{\epsilon}{10}$ for any $z$ such that $\|z\|_1\geq M_1$. Then, Lemma \ref{Lemma 3.3 correlation decay d=4} holds with $M=M_1+1$.
\qed
\subsection{Proof of \eqref{equ 3.13}}\label{Appendix A.4}
\proof[Proof of \eqref{equ 3.13}]
According to the definition of $q_t^{\textbf{0}, x}$,
\begin{align*}
&\sum_{z_1\in \mathbb{Z}^4}\sum_{v_2:0<\|v_2-z_1\|_1\leq M}\int_0^N\int_0^Nq_{r_1}^{\textbf{0}, x}(z_1, z_1)q_{r_2}^{\textbf{0}, x}(z_1, v_2)dr_1dr_2\\
&=\int_0^N\int_0^N\tilde{\mathbb{P}}\left(\mathcal{X}_{r_1, 1}^{\textbf{0}, 0}=\mathcal{X}_{r_1, 2}^{x, 0}=\tilde{\mathcal{X}}_{r_2, 1}^{\textbf{0}, 0}, 0<\|\tilde{\mathcal{X}}_{r_2, 2}^{x, 0}-\tilde{\mathcal{X}}_{r_2, 1}^{\textbf{0}, 0}\|_1\leq M\right)dr_1dr_2,
\end{align*}
where all the notations are defined as in the proof of Lemma \ref{lemma 3.4}. We assume that $\{Y_t\}_{t\geq 0}$ is a copy of $\{X_t\}_{t\geq 0}$ starting from $x$ and $\{Y_t\}_{t\geq 0}$ is independent of $\left\{\left(\mathcal{X}_{t, 1}^{\textbf{0}, 0}, \mathcal{X}_{t, 2}^{x, 0}, \tilde{\mathcal{X}}_{t, 1}^{\textbf{0}, 0}\right)\right\}_{t\geq 0}$. Then, according to the definition of the coalescing random walk, we can further assume that $Y_t=\tilde{\mathcal{X}}_{t, 2}^{x, 0}$ for $0\leq t\leq \tau_3$,
where
\[
\tau_3=\inf\left\{t: \tilde{\mathcal{X}}_{t, 1}^{\textbf{0}, 0}=Y_t\right\}.
\]
Hence,
\begin{align*}
&\tilde{\mathbb{P}}\left(\mathcal{X}_{r_1, 1}^{\textbf{0}, 0}=\mathcal{X}_{r_1, 2}^{x, 0}=\tilde{\mathcal{X}}_{r_2, 1}^{\textbf{0}, 0}, 0<\|\tilde{\mathcal{X}}_{r_2, 2}^{x, 0}-\tilde{\mathcal{X}}_{r_2, 1}^{\textbf{0}, 0}\|_1\leq M\right)\\
&=\tilde{\mathbb{P}}\left(\mathcal{X}_{r_1, 1}^{\textbf{0}, 0}=\mathcal{X}_{r_1, 2}^{x, 0}=\tilde{\mathcal{X}}_{r_2, 1}^{\textbf{0}, 0}, \tau_3>r_2, 0<\|Y_{r_2}-\tilde{\mathcal{X}}_{r_2, 1}^{\textbf{0}, 0}\|_1\leq M\right)\\
&\leq \tilde{\mathbb{P}}\left(\mathcal{X}_{r_1, 1}^{\textbf{0}, 0}=\mathcal{X}_{r_1, 2}^{x, 0}=\tilde{\mathcal{X}}_{r_2, 1}^{\textbf{0}, 0}, \|Y_{r_2}-\tilde{\mathcal{X}}_{r_2, 1}^{\textbf{0}, 0}\|_1\leq M\right).
\end{align*}
For each $z\in \mathbb{Z}^4$, there are $(2M+1)^4$ vertices $v\in \mathbb{Z}^4$ satisfies that $\|v-z\|_1\leq M$. Then, according to the fact that
$p_t(w_1, w_2)\leq p_t(\textbf{0}, \textbf{0})$ for any $w_1, w_2\in \mathbb{Z}^4$ and the total probability formula,
\begin{align*}
&\tilde{\mathbb{P}}\left(\mathcal{X}_{r_1, 1}^{\textbf{0}, 0}=\mathcal{X}_{r_1, 2}^{x, 0}=\tilde{\mathcal{X}}_{r_2, 1}^{\textbf{0}, 0}, \|Y_{r_2}-\tilde{\mathcal{X}}_{r_2, 1}^{\textbf{0}, 0}\|_1\leq M\right)\\
&\leq (2M+1)^4\tilde{\mathbb{P}}\left(\mathcal{X}_{r_1, 1}^{\textbf{0}, 0}=\mathcal{X}_{r_1, 2}^{x, 0}=\tilde{\mathcal{X}}_{r_2, 1}^{\textbf{0}, 0}\right)p_{r_2}(\textbf{0}, \textbf{0})
\end{align*}
and hence
\begin{align*}
&\int_0^N\int_0^N\tilde{\mathbb{P}}\left(\mathcal{X}_{r_1, 1}^{\textbf{0}, 0}=\mathcal{X}_{r_1, 2}^{x, 0}=\tilde{\mathcal{X}}_{r_2, 1}^{\textbf{0}, 0}, 0<\|\tilde{\mathcal{X}}_{r_2, 2}^{x, 0}-\tilde{\mathcal{X}}_{r_2, 1}^{\textbf{0}, 0}\|_1\leq M\right)dr_1dr_2\\
&\leq (2M+1)^4\int_0^N\int_0^N\tilde{\mathbb{P}}\left(\mathcal{X}_{r_1, 1}^{\textbf{0}, 0}=\mathcal{X}_{r_1, 2}^{x, 0}=\tilde{\mathcal{X}}_{r_2, 1}^{\textbf{0}, 0}\right)p_{r_2}(\textbf{0}, \textbf{0})dr_1dr_2\\
&\leq (2M+1)^4\int_0^N\int_0^N\tilde{\mathbb{P}}\left(\mathcal{X}_{r_1, 1}^{\textbf{0}, 0}=\tilde{\mathcal{X}}_{r_2, 1}^{\textbf{0}, 0}\right)p_{r_2}(\textbf{0}, \textbf{0})dr_1dr_2\\
&=(2M+1)^4\int_0^N\int_0^Np_{r_1+r_2}(\textbf{0}, \textbf{0})p_{r_2}(\textbf{0}, \textbf{0})dr_1dr_2\leq (2M+1)^4\left(\int_0^{+\infty}p_s(\textbf{0}, \textbf{0})ds\right)^2.
\end{align*}
Note that $\int_0^{+\infty}p_s(\textbf{0}, \textbf{0})ds<+\infty$ according to the transience of the simple random walk on $\mathbb{Z}^4$. As a result,
\[
\int_0^N\int_0^N\tilde{\mathbb{P}}\left(\mathcal{X}_{r_1, 1}^{\textbf{0}, 0}=\mathcal{X}_{r_1, 2}^{x, 0}=\tilde{\mathcal{X}}_{r_2, 1}^{\textbf{0}, 0}, 0<\|\tilde{\mathcal{X}}_{r_2, 2}^{x, 0}-\tilde{\mathcal{X}}_{r_2, 1}^{\textbf{0}, 0}\|_1\leq M\right)dr_1dr_2=O(1)
\]
and hence \eqref{equ 3.13} holds.
\qed

\subsection{Proof of \eqref{equ 3.16}}\label{Appendix A.5}
\proof[Proof of \eqref{equ 3.16}]
According to the definition of $q_t^{\textbf{0}, x}$,
\begin{align*}
&\sum_{z_1\in \mathbb{Z}^4}
\sum_{(v_2, w_2)\in \mathcal{R}(M, z_1)}\int_0^N\int_0^Nq_{r_1}^{\textbf{0}, x}(z_1, v_2)q_{r_2}^{\textbf{0}, x}(z_1, w_2)dr_1dr_2\\
&=\int_0^N\int_0^N\tilde{\mathbb{P}}\left(\mathcal{X}_{r_1, 1}^{\textbf{0}, 0}=\tilde{\mathcal{X}}_{r_2, 1}^{\textbf{0}, 0},
\left(\mathcal{X}_{r_1, 2}^{x, 0}, \tilde{\mathcal{X}}_{r_2, 2}^{x, 0}\right)\in\mathcal{R}(M, \mathcal{X}_{r_1, 1}^{\textbf{0}, 0})\right)dr_1dr_2,
\end{align*}
where all the notations are defined as in the proof of Lemma \ref{lemma 3.4}. Let $Y_t, \tau_3$ be defined as in the proof of \eqref{equ 3.16}. We further define $\{\tilde{Y}_t\}_{t\geq 0}$ as another copy of $\{X_t\}_{t\geq 0}$ starting from $x$ such that $\{\tilde{Y}_t\}_{t\geq 0}$ is independent of
$\left\{\left(\mathcal{X}_{t, 1}^{\textbf{0}, 0}, \tilde{\mathcal{X}}_{t, 1}^{\textbf{0}, 0}, Y_t\right)\right\}_{t\geq 0}$ and
$\mathcal{X}_{t, 2}^{x, 0}=\tilde{Y}_t$ for $t\leq \tau_4$, where
\[
\tau_4=\inf\left\{t:~\tilde{Y}_t=\mathcal{X}_{t, 1}^{\textbf{0}, 0}\right\}.
\]
Then,
\begin{align*}
&\tilde{\mathbb{P}}\left(\mathcal{X}_{r_1, 1}^{\textbf{0}, 0}=\tilde{\mathcal{X}}_{r_2, 1}^{\textbf{0}, 0},
\left(\mathcal{X}_{r_1, 2}^{x, 0}, \tilde{\mathcal{X}}_{r_2, 2}^{x, 0}\right)\in\mathcal{R}(M, \mathcal{X}_{r_1, 1}^{\textbf{0}, 0})\right)\\
&=\tilde{\mathbb{P}}\left(\mathcal{X}_{r_1, 1}^{\textbf{0}, 0}=\tilde{\mathcal{X}}_{r_2, 1}^{\textbf{0}, 0}, \tau_3>r_2, \tau_4>r_1,
\left(\tilde{Y}_{r_1}, Y_{r_2}\right)\in\mathcal{R}(M, \mathcal{X}_{r_1, 1}^{\textbf{0}, 0})\right)\\
&\leq \tilde{\mathbb{P}}\left(\mathcal{X}_{r_1, 1}^{\textbf{0}, 0}=\tilde{\mathcal{X}}_{r_2, 1}^{\textbf{0}, 0}, \|\tilde{Y}_{r_1}-\mathcal{X}_{r_1, 1}^{\textbf{0}, 0}\|_1\leq M\right)+\tilde{\mathbb{P}}\left(\mathcal{X}_{r_1, 1}^{\textbf{0}, 0}=\tilde{\mathcal{X}}_{r_2, 1}^{\textbf{0}, 0}, \|Y_{r_2}-\mathcal{X}_{r_1, 1}^{\textbf{0}, 0}\|_1\leq M\right)\\
&\text{\quad}+\tilde{\mathbb{P}}\left(\mathcal{X}_{r_1, 1}^{\textbf{0}, 0}=\tilde{\mathcal{X}}_{r_2, 1}^{\textbf{0}, 0}, \|Y_{r_2}-\tilde{Y}_{r_1}\|_1\leq M\right).
\end{align*}
According to the total probability formula,
\begin{align*}
&\tilde{\mathbb{P}}\left(\mathcal{X}_{r_1, 1}^{\textbf{0}, 0}=\tilde{\mathcal{X}}_{r_2, 1}^{\textbf{0}, 0}, \|\tilde{Y}_{r_1}-\mathcal{X}_{r_1, 1}^{\textbf{0}, 0}\|_1\leq M\right)\leq (2M+1)^4p_{r_1}(\textbf{0}, \textbf{0})\tilde{\mathbb{P}}\left(\mathcal{X}_{r_1, 1}^{\textbf{0}, 0}=\tilde{\mathcal{X}}_{r_2, 1}^{\textbf{0}, 0}\right)\\
&=(2M+1)^4p_{r_1}(\textbf{0}, \textbf{0})p_{r_1+r_2}(\textbf{0}, \textbf{0}).
\end{align*}
Similarly,
\[
\tilde{\mathbb{P}}\left(\mathcal{X}_{r_1, 1}^{\textbf{0}, 0}=\tilde{\mathcal{X}}_{r_2, 1}^{\textbf{0}, 0}, \|Y_{r_2}-\mathcal{X}_{r_1, 1}^{\textbf{0}, 0}\|_1\leq M\right)\leq (2M+1)^4p_{r_2}(\textbf{0}, \textbf{0})p_{r_1+r_2}(\textbf{0}, \textbf{0})
\]
and
\[
\tilde{\mathbb{P}}\left(\mathcal{X}_{r_1, 1}^{\textbf{0}, 0}=\tilde{\mathcal{X}}_{r_2, 1}^{\textbf{0}, 0}, \|Y_{r_2}-\tilde{Y}_{r_1}\|_1\leq M\right)
\leq (2M+1)^4p_{r_2}(\textbf{0}, \textbf{0})p_{r_1+r_2}(\textbf{0}, \textbf{0}).
\]
Therefore,
\begin{align*}
&\sum_{z_1\in \mathbb{Z}^4}
\sum_{(v_2, w_2)\in \mathcal{R}(M, z_1)}\int_0^N\int_0^Nq_{r_1}^{\textbf{0}, x}(z_1, v_2)q_{r_2}^{\textbf{0}, x}(z_1, w_2)dr_1dr_2\\
&\leq 3(2M+1)^4\int_0^{+\infty}\int_0^{+\infty}p_{r_2}(\textbf{0}, \textbf{0})p_{r_1+r_2}(\textbf{0}, \textbf{0})dr_1dr_2\\
&\leq 3(2M+1)^4\left(\int_0^{+\infty}p_s(\textbf{0}, \textbf{0})ds\right)^2<+\infty
\end{align*}
and hence \eqref{equ 3.16} holds.
\qed

\subsection{Proof of Equation \eqref{equ 4.8}}\label{Appendix A.6}
In this subsection, we prove \eqref{equ 4.8} through several steps. According to the invariance of $\nu_p$ the definition of $\hat{H}_\epsilon$,
\begin{align}\label{equ A.6 1}
&{\rm Var}_{\nu_p}\left(\int_0^{t}\mathcal{Y}_r^N(\hat{H}_\epsilon)dr
-\frac{1}{N^{\frac{3}{4}}}\xi_{tN}^{\textbf{0}, \sqrt{N}u_N}\right)\notag\\
&=\frac{2}{(2\epsilon)^{12}N^{\frac{3}{2}}}\int_0^{tN}d\theta_1\int_0^{\theta_2}\frac{1}{N^6}\sum_{(z_1, z_2, z_3, z_4)\in D^N(\epsilon)}\notag\\
&
{\rm Cov}_{\nu_p}\left(\eta_{\theta_2}(z_1)\eta_{\theta_2}(z_2)-\eta_{\theta_2}(\textbf{0})\eta_{\theta_2}(\sqrt{N}u_N), \eta_{0}(z_3)\eta_{0}(z_4)-\eta_{0}(\textbf{0})\eta_{0}(\sqrt{N}u_N)\right)d\theta_2,
\end{align}
where $D^N(\epsilon)\subseteq (\mathbb{Z}^3)^4$ is defined as
\begin{align*}
D^N(\epsilon)=\Bigg\{(z_1, z_2, z_3, z_4)\in (\mathbb{Z}^3)^4:&
~\|z_1\|_1\leq \sqrt{N}\epsilon, \|z_2-\sqrt{N}u\|_1\leq \sqrt{N}\epsilon, \\
&\|z_3\|_1\leq \sqrt{N}\epsilon, \|z_4-\sqrt{N}u\|_1\leq \sqrt{N}\epsilon\Bigg\}.
\end{align*}
For any $(z_1, z_2, z_3, z_4)\in D^N(\epsilon)$, according to the definition of $\nu_p$,
\begin{align*}
{\rm Cov}_{\nu_p}\left(\eta_{\theta_2}(z_1)\eta_{\theta_2}(z_2), \eta_0(z_3)\eta_0(z_4)\right)=\lim_{r\rightarrow+\infty}{\rm Cov}_{\mu_p}\left(\eta_{\theta_2+r}(z_1)\eta_{\theta_2+r}(z_2), \eta_r(z_3)\eta_r(z_4)\right).
\end{align*}
By Proposition \ref{Proposition A.1 dual},
\begin{align}\label{equ A.6 1 two}
&{\rm Cov}_{\mu_p}\left(\eta_{\theta_2+r}(z_1)\eta_{\theta_2+r}(z_2), \eta_r(z_3)\eta_r(z_4)\right)=\notag\\
&\tilde{\mathbb{E}}\left(\mathbb{E}_{\mu_p}\left(\prod_{j=1}^2\eta_0(\mathcal{X}_{r+\theta_2, j}^{z_j, 0})\prod_{j=3}^4\eta_0(\mathcal{X}_{r+\theta_2, j}^{z_j, \theta_2})\right)\right)\notag\\
&-\tilde{\mathbb{E}}\left(\mathbb{E}_{\mu_p}\left(\prod_{j=1}^2\eta_0(\mathcal{X}_{r+\theta_2, j}^{z_j, 0})\right)\right)
\tilde{\mathbb{E}}\left(\mathbb{E}_{\mu_p}\left(\prod_{j=3}^4\eta_0(\tilde{\mathcal{X}}_{r+\theta_2, j-2}^{z_j, \theta_2})\right)\right),
\end{align}
where $\left\{\left(\tilde{\mathcal{X}}_{t, 1}^{z_3, \theta_2}, \tilde{\mathcal{X}}_{t, 2}^{z_4, \theta_2}\right)\right\}_{t\geq 0}$ is a copy of
$\left\{\left(\mathcal{X}_{t, 1}^{z_3, \theta_2}, \mathcal{X}_{t, 2}^{z_4, \theta_2}\right)\right\}_{t\geq 0}$.

We introduce four independent random walks to make $\left\{\left(\tilde{\mathcal{X}}_{t, 1}^{z_3, \theta_2}, \tilde{\mathcal{X}}_{t, 2}^{z_4, \theta_2}\right)\right\}_{t\geq 0}$ and
\[
\left\{\left(\mathcal{X}_{t, 1}^{z_1, 0}, \mathcal{X}_{t, 2}^{z_2, 0}, \mathcal{X}_{t, 3}^{z_3, \theta_2}, \mathcal{X}_{t, 4}^{z_4, \theta_2}\right)\right\}_{t\geq 0}
\]
defined in the same probability space. For $j=1, 2$, we denote by $\{\mathcal{Z}_t^j\}_{t\geq 0}$ a copy of $\{X_t\}_{t\geq 0}$ starting from $z_j$. For $j=3, 4$, we denote by $\{\mathcal{Z}_t^j\}_{t\geq 0}$ the random process such that $\mathcal{Z}_t^j=z_j$ for $0\leq t\leq \theta_2$ and $\{\mathcal{Z}_{t+\theta_2}^j\}_{t\geq 0}$ is a copy of $\{X_t\}_{t\geq 0}$ starting from $z_j$. We assume that
\[
\{\mathcal{Z}_t^j:~t\geq 0\}_{j=1}^4
\]
are independent. Then, for any $t\geq 0$, we define $\mathcal{X}_{t, 1}^{z_1, 0}=\mathcal{Z}_t^1$. We further define
\[
\mathcal{X}_{t, 2}^{z_2, 0}=
\begin{cases}
\mathcal{Z}_t^2 &\text{~if~}t\leq \Gamma_{12},\\
\mathcal{X}_{t, 1}^{z_1, 0} &\text{~if~} t>\Gamma_{12},
\end{cases}
\]
where
\[
\Gamma_{ij}=\inf\left\{t:~t\geq c_i, t\geq c_j, \mathcal{Z}_t^i=\mathcal{Z}_t^j\right\}
\]
for any different $i, j\in \{1, 2, 3, 4\}$, where $(c_1, c_2, c_3, c_4)=(0, 0, \theta_2, \theta_2)$. For any $t\geq 0$, we define
$\tilde{\mathcal{X}}_{t, 1}^{z_3, \theta_2}=\mathcal{Z}_t^3$ and
\[
\tilde{\mathcal{X}}_{t, 2}^{z_4, \theta_2}=
\begin{cases}
\mathcal{Z}_t^4 &\text{~if~}t\leq \Gamma_{34},\\
\tilde{\mathcal{X}}_{t, 1}^{z_3, \theta_2} &\text{~if~} t>\Gamma_{34}.
\end{cases}
\]
For $i\in \{1, 2\}$, we define
\[
\hat{\Gamma}_{i3}=\inf\{t:t\geq \theta_2, \mathcal{X}_{t, i}^{z_i, 0}=\mathcal{Z}_t^3\}.
\]
For any $t\geq 0$, we define
\[
\mathcal{X}_{t, 3}^{z_3, \theta_2}=
\begin{cases}
\mathcal{Z}_t^3 &\text{~if~}t\leq \min\{\hat{\Gamma}_{13}, \hat{\Gamma}_{23}\},\\
\mathcal{X}_{t, 2}^{z_2, 0} &\text{~if~}t>\min\{\hat{\Gamma}_{13}, \hat{\Gamma}_{23}\}
\text{~and~}\hat{\Gamma}_{13}>\hat{\Gamma}_{23},\\
\mathcal{X}_{t, 1}^{z_1, 0} &\text{~if~}t>\min\{\hat{\Gamma}_{13}, \hat{\Gamma}_{23}\}
\text{~and~}\hat{\Gamma}_{23}\geq \hat{\Gamma}_{13}.
\end{cases}
\]
For $i\in \{1, 2, 3\}$, we define
\[
\hat{\Gamma}_{i4}=\inf\{t:t\geq \theta_2, \mathcal{X}_{t, i}^{z_i, c_i}=\mathcal{Z}_t^4\}.
\]
For any $t\geq 0$, we define
\[
\mathcal{X}_{t, 4}^{z_4, \theta_2}=
\begin{cases}
\mathcal{Z}_t^4 &\text{~if~}t\leq \min\{\hat{\Gamma}_{14}, \hat{\Gamma}_{24}, \hat{\Gamma}_{34}\},\\
\mathcal{X}_{t, 3}^{z_3, \theta_2} &\text{~if~}\min\{\hat{\Gamma}_{14}, \hat{\Gamma}_{24}, \hat{\Gamma}_{34}\}=\hat{\Gamma}_{34}\text{~and~}t\geq \hat{\Gamma}_{34},\\
\mathcal{X}_{t, 2}^{z_2, 0} &\text{~if~}\min\{\hat{\Gamma}_{14}, \hat{\Gamma}_{24}, \hat{\Gamma}_{34}\}=\hat{\Gamma}_{24}\text{~and~}t\geq \hat{\Gamma}_{24},\\
\mathcal{X}_{t, 1}^{z_1, 0} &\text{~if~}\min\{\hat{\Gamma}_{14}, \hat{\Gamma}_{24}, \hat{\Gamma}_{34}\}=\hat{\Gamma}_{14}
\text{~and~}t\geq \hat{\Gamma}_{14}.
\end{cases}
\]
According to the above definitions, $\left\{\left(\tilde{\mathcal{X}}_{t, 1}^{z_3, \theta_2}, \tilde{\mathcal{X}}_{t, 2}^{z_4, \theta_2}\right)\right\}_{t\geq 0}$ is independent of
\[
\left\{\left(\mathcal{X}_{t, 1}^{z_1, 0}, \mathcal{X}_{t, 2}^{z_2, 0}\right)\right\}_{t\geq 0}
\]
and we have
\[
\left(\tilde{\mathcal{X}}_{t, 1}^{z_3, \theta_2}, \tilde{\mathcal{X}}_{t, 2}^{z_4, \theta_2}\right)
=\left(\mathcal{X}_{t, 3}^{z_3, \theta_2}, \mathcal{X}_{t, 4}^{z_4, \theta_2}\right)
\]
for any $t\leq \tilde{\Gamma}$, where
\[
\tilde{\Gamma}=\inf\left\{t:~t\geq \theta_2, \left\{\mathcal{X}_{t, 1}^{z_1, 0}, \mathcal{X}_{t, 2}^{z_2, 0}\right\}\bigcap \left\{\tilde{\mathcal{X}}_{t, 1}^{z_3, \theta_2}, \tilde{\mathcal{X}}_{t, 2}^{z_4, \theta_2}\right\}\neq \emptyset\right\}.
\]
For any $i\in \{1, 2\}$, we denote by $\mathcal{I}_i$ the event that $\Gamma_{i3}<+\infty$ and $\Gamma_{i4}<+\infty$. For any $j\in \{3, 4\}$, we denote by $\mathcal{K}_j$ the event that $\Gamma_{1j}<+\infty$ and $\Gamma_{2j}<+\infty$. For any $i\in \{1, 2\}$ and $j\in \{3, 4\}$, we denote by $\mathcal{H}_{ij}$ the event that $\Gamma_{ij}<+\infty$ and $\Gamma_{\hat{i}\hat{j}}<+\infty$, where $\hat{i}=3-i, \hat{j}=7-j$. For any $i\in \{1, 2\}$ and $j\in \{3, 4\}$, we denote by $\mathcal{N}_{ij}$ the event that $\Gamma_{i\hat{i}}<+\infty$ and $\Gamma_{ij}<\infty$. We further denote by $\mathcal{T}_{ji}$ the event that
$\Gamma_{j\hat{j}}<+\infty$ and $\Gamma_{ij}<+\infty$. We write $\mathcal{I}_1$ as $\mathcal{I}_1(z_1, z_2, z_3, z_4, \theta_2)$ when we need to emphasize the dependence of $\mathcal{I}_1$ on $z_1, z_2, z_3, z_4, \theta_2$. So do other notations.

The following lemma is crucial for us to prove \eqref{equ 4.8}.
\begin{lemma}\label{lemma appendix A.6 1}
Let $\epsilon$ be sufficiently small such that $\|u\|_1\geq 3\epsilon$. For any given $i\in \{1, 2\}$ and $j\in \{3, 4\}$, we have
\[
\lim_{N\rightarrow+\infty}\frac{1}{N^{\frac{3}{2}}}
\sup_{(z_1, z_2, z_3, z_4)\in D^N(\epsilon)}\int_0^{tN}d\theta_1\int_0^{\theta_1}\tilde{\mathbb{P}}\left(\mathcal{I}_i(z_1, z_2, z_3, z_4, \theta_2)\right)d\theta_2=0,
\]
\[
\lim_{N\rightarrow+\infty}\frac{1}{N^{\frac{3}{2}}}
\sup_{(z_1, z_2, z_3, z_4)\in D^N(\epsilon)}\int_0^{tN}d\theta_1\int_0^{\theta_1}\tilde{\mathbb{P}}\left(\mathcal{K}_j(z_1, z_2, z_3, z_4, \theta_2)\right)d\theta_2=0,
\]
\[
\lim_{N\rightarrow+\infty}\frac{1}{N^{\frac{3}{2}}}
\sup_{(z_1, z_2, z_3, z_4)\in D^N(\epsilon)}\int_0^{tN}d\theta_1\int_0^{\theta_1}\tilde{\mathbb{P}}\left(\mathcal{H}_{ij}(z_1, z_2, z_3, z_4, \theta_2)\right)d\theta_2=0,
\]
\[
\lim_{N\rightarrow+\infty}\frac{1}{N^{\frac{3}{2}}}
\sup_{(z_1, z_2, z_3, z_4)\in D^N(\epsilon)}\int_0^{tN}d\theta_1\int_0^{\theta_1}\tilde{\mathbb{P}}\left(\mathcal{N}_{ij}(z_1, z_2, z_3, z_4, \theta_2)\right)d\theta_2=0
\]
and
\[
\lim_{N\rightarrow+\infty}\frac{1}{N^{\frac{3}{2}}}
\sup_{(z_1, z_2, z_3, z_4)\in D^N(\epsilon)}\int_0^{tN}d\theta_1\int_0^{\theta_1}\tilde{\mathbb{P}}\left(\mathcal{T}_{ji}(z_1, z_2, z_3, z_4, \theta_2)\right)d\theta_2=0.
\]
\end{lemma}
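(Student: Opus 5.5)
The plan is to reduce every one of the five events to an intersection of two pairwise collision events among the four independent walks $\mathcal{Z}^1,\ldots,\mathcal{Z}^4$. Each such pairwise collision probability will be bounded by the Green-function estimate already used in Appendix \ref{Appendix A.2}, namely
\[
\tilde{\mathbb{P}}(\Gamma_{ij}<+\infty)=K_1\int_0^{+\infty}p_{r+|c_i-c_j|}(z_i,z_j)\,dr .
\]
In $d=3$ this is at most $C\min\{\|z_i-z_j\|_1^{-1},(|c_i-c_j|+1)^{-1/2}\}$, by the local CLT bound $p_{t,3}(\textbf{0},\textbf{0})\leq K_2t^{-3/2}$ and $\Phi(x)=O(\|x\|_1^{-1})$.

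Two geometric facts drive everything. On $D^N(\epsilon)$ with $\|u\|_1\geq 3\epsilon$, any pair consisting of one point near $\textbf{0}$ (namely $z_1,z_3$) and one near $\sqrt{N}u$ (namely $z_2,z_4$) is at distance at least $\epsilon\sqrt{N}$. Pairs $(i,j)$ with $i\in\{1,2\}$, $j\in\{3,4\}$ carry the time delay $\theta_2$.

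The baseline count is the following. A single factor $(\theta_2+1)^{-1/2}$ integrated as $\int_0^{tN}d\theta_1\int_0^{\theta_1}d\theta_2$ gives order $N^{3/2}$, which is exactly the normalisation. So for each event I must extract one additional decaying factor: either a second factor $(\theta_2+1)^{-1/2}$, giving $O(N\log N)$, or a factor $(\epsilon\sqrt{N})^{-1}$, giving $O(\epsilon^{-1}N)$. Both are $o(N^{3/2})$ uniformly over $D^N(\epsilon)$, because all the bounds depend on the $z$'s only through the lower bound $\epsilon\sqrt{N}$ on cross distances.

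The disjoint-pair events are handled first. For $\mathcal{H}_{ij}$ the two collisions involve four distinct independent walks, so the probability factorises. For instance, $\mathcal{H}_{13}$ has probability at most $C(\theta_2+1)^{-1}$, and $\mathcal{H}_{14}$ has probability at most $C(\theta_2+1)^{-1/2}(\epsilon\sqrt{N})^{-1}$. The events $\mathcal{N}_{ij}$ and $\mathcal{T}_{ji}$ contain $\Gamma_{12}<\infty$ or $\Gamma_{34}<\infty$, whose endpoints are at distance at least $\epsilon\sqrt{N}$. These events also contain a second collision sharing one walk, so independence fails.

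For all shared-walk cases ($\mathcal{I}_i$, $\mathcal{K}_j$, $\mathcal{N}_{ij}$, $\mathcal{T}_{ji}$) I will use a strong Markov decomposition over which collision happens first. Say walk $B$ meets $A$ and also meets $C$. Then
\[
\tilde{\mathbb{P}}\le \sum_{\text{orders}}\ \sum_w\int \tilde{\mathbb{P}}\big(\text{first collision at }(s,w)\big)\,\sup_{\text{state at }s}\tilde{\mathbb{P}}\big(\text{second collision after }s\big).
\]
The second factor is bounded by the time-integrated kernel from $w$ to the current position of the third walk. Integrating out positions with Chapman--Kolmogorov, exactly as in the proof of Lemma \ref{lemma 3.8}, gives three-point bounds of the form
\[
K_1^2\int\!\!\int p_{s+\cdot}(z_a,z_b)\,p_{\cdot}(\cdot,z_c)\,ds\,dr .
\]
In each such expression one kernel either connects the $\textbf{0}$-cluster to the $u$-cluster, and so is bounded by $(\epsilon\sqrt{N})^{-1}$ via $\sup_x\int_0^\infty p_r(x,y)dr$ restricted to $\|x-y\|\geq\epsilon\sqrt N$, or carries the delay $\theta_2$.

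The main obstacle is $\mathcal{I}_i$ and $\mathcal{K}_j$. Take for example $\mathcal{I}_1$: $\mathcal{Z}^1$ meets both $\mathcal{Z}^3$ (same cluster) and $\mathcal{Z}^4$ (other cluster). If the near collision happens first, at time $s\ge\theta_2$, the walk $\mathcal{Z}^1$ then sits near $\mathcal{Z}^3$. However, $\mathcal{Z}^4$ may by then have drifted close, so the spatial-separation gain could be lost. My first attempt is to bound the second collision by $C(s+1)^{-1/2}$ rather than by distance: $\mathcal{Z}^4$ started at time $\theta_2$ at distance at least $\epsilon\sqrt N$, so at time $s$ its distance density is controlled by $p_{s-\theta_2}$. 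Combined with $\int\!\int$ over $(\theta_1,\theta_2)$, this should give $O(N^{3/2}\epsilon^{-1}N^{-1/2}\log N)$. If the logarithm or an $\epsilon$-dependence becomes awkward, this is still $o(N^{3/2})$ for fixed $\epsilon$, which is all the lemma requires.
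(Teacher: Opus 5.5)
\textbf{Verdict: genuine gap.} Your framework is the paper's: pairwise collisions of the independent walks $\mathcal{Z}^1,\ldots,\mathcal{Z}^4$, strong Markov at the first collision, and Green-function bounds. Your treatment of $\mathcal{H}_{ij}$ is correct. However, the step you flag as the main obstacle is not closed, and the bound you propose there is false.

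\textbf{Why the proposed bound fails.} In $\mathcal{I}_1$ on $\{\Gamma_{13}\le\Gamma_{14}\}$, condition on $(\Gamma_{13},\mathcal{Z}^1_{\Gamma_{13}})=(s,w)$ and average over the independent walk $\mathcal{Z}^4$. The second factor is then $K_1\int_0^{+\infty}p_{s-\theta_2+r}(z_4,w)\,dr$.
\begin{itemize}
\item Since $\mathcal{Z}^4$ has only run for time $s-\theta_2$, this decays like $(s-\theta_2+1)^{-1/2}$, not $(s+1)^{-1/2}$.
\item It is of order one when $w$ is near $z_4$ and $s-\theta_2$ is small.
\item Paired only with the total mass $\tilde{\mathbb{P}}(\Gamma_{13}<+\infty)\le C(\theta_2+1)^{-1/2}$, it gives nothing beyond the $O(N^{3/2})$ baseline. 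So your claimed $O(N^{3/2}\epsilon^{-1}N^{-1/2}\log N)$ is unsupported.
\end{itemize}
The same defect occurs for $\mathcal{I}_2$, and for $\mathcal{T}_{ji}$ when $\Gamma_{ij}$ precedes $\Gamma_{34}$. In each case the walk still to be met is $\mathcal{Z}^3$ or $\mathcal{Z}^4$, which only starts at time $\theta_2$.

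Your $C(s+1)^{-1/2}$ is valid only when that walk is $\mathcal{Z}^1$ or $\mathcal{Z}^2$. This covers $\mathcal{K}_j$, $\mathcal{N}_{ij}$ with $\Gamma_{ij}$ first, and $\mathcal{T}_{ji}$ with $\Gamma_{34}$ first; there it does give $(\theta_2+1)^{-1/2}$ for free.

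Also, the $\sup$ over the state at time $s$ in your display equals $1$. You must instead average over the third walk's position, which is legitimate because that walk is independent of the first collision.

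\textbf{What is missing: control of where and when the first meeting occurs.} The paper bounds the second factor by $\Phi(z_4-w)$ and splits on whether $\|w-z_4\|_1\le M$.
\begin{itemize}
\item If $w$ is far from $z_4$, then $\Phi(z_4-w)\le\sup_{\|z\|_1\ge M}\Phi(z)$. This multiplies the $O(N^{3/2})$ baseline coming from $\tilde{\mathbb{P}}(\Gamma_{13}<+\infty)$.
\item If $w$ is near $z_4$, then $\mathcal{Z}^3$ must visit the $M$-ball around $z_4$. That part of the first-collision law has mass at most $K_1\int_0^{+\infty}p_{\theta_2+s}(\textbf{0},\textbf{0})\,ds\sum_{\|z-z_4\|_1\le M}\Phi(z_3-z)$, and the sum is $O(M^3(\epsilon\sqrt{N})^{-1})$.
\item Let $N\to+\infty$ and then $M\to+\infty$.
\end{itemize}

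\textbf{Alternative repair along your own route.} You can instead add a first-collision density bound.
\begin{itemize}
\item For $s>\theta_2$, $\tilde{\mathbb{P}}(\Gamma_{13}\in ds)\le C(2s-\theta_2+1)^{-3/2}\,ds$, because the walks must be adjacent just before meeting. There is also an atom of mass $p_{\theta_2}(z_1,z_3)$ at $s=\theta_2$.
\item Combined with the correct factor $C(s-\theta_2+1)^{-1/2}$, this gives $\tilde{\mathbb{P}}(\mathcal{I}_1)\le C(\theta_2+1)^{-1}$, hence $O(N\log N)$ after integration.
\item This is an explicit rate that the paper's truncation does not provide.
\end{itemize}

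\textbf{A smaller point on $\mathcal{N}_{ij}$.} When $\Gamma_{12}$ comes first, the remaining kernel carries the delay $|\theta_2-s|$, not $\theta_2$. You need to integrate in $\theta_2$ first (Fubini), as the paper does, using that the law of $\Gamma_{12}$ does not depend on $\theta_2$.
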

\proof[Proof of Lemma \ref{lemma appendix A.6 1}]
We mainly deal with the parts of $\mathcal{I}_1, \mathcal{H}_{13}$ and $\mathcal{N}_{13}$, since other parts follow from similar arguments. For the $\mathcal{I}_1$ part, let $\hat{m}(dr, dz)$ be the joint distribution of $\left(\Gamma_{13}, \mathcal{Z}^1_{\Gamma_{13}}\right)$, then according to the Markov property,
\begin{align*}
&\tilde{\mathbb{P}}\left(\mathcal{I}_1, \Gamma_{13}\leq \Gamma_{14}<+\infty\right)\\
&=\sum_{z\in \mathbb{Z}^3}\sum_{w\in \mathbb{Z}^3}\int_{\theta_2}^{+\infty}\left(p_{r-\theta_2}(z_4, w)\Phi(w-z)\right)\hat{m}(dr, dz)\\
&=K_1\sum_{z}\int_{\theta_2}^{+\infty}\left(\int_0^{+\infty}p_{r-\theta_2+s}(z_4,z)ds\right)\hat{m}(dr, dz)\\
&\leq K_1\sum_{z}\int_{\theta_2}^{+\infty}\left(\int_0^{+\infty}p_{s}(z_4,z)ds\right)\hat{m}(dr, dz)\\
&=\sum_{z}\int_{\theta_2}^{+\infty}\Phi(z_4-z)\hat{m}(dr, dz).
\end{align*}
According to the definition of $\hat{m}$, for any $M\geq 1$,
\begin{align*}
&\sum_{z:\|z-z_4\|_1\leq M}\int_{\theta_2}^{+\infty}\Phi(z_4-z)\hat{m}(dr, dz)\\
&\leq \sum_{z:\|z-z_4\|_1\leq M}\int_{\theta_2}^{+\infty}1\hat{m}(dr, dz)\\
&\leq \sum_{z:\|z-z_4\|_1\leq M}\Phi(z_3-z)\tilde{\mathbb{P}}(\mathcal{Z}_t^1=z\text{~for some~}t\geq \theta_2)\\
&=K_1\sum_{z:\|z-z_4\|_1\leq M}\Phi(z_3-z)\int_0^{+\infty}p_{\theta_2+s}(z_1, z)ds\\
&\leq K_1\int_0^{+\infty}p_{\theta_2+s}(\textbf{0}, \textbf{0})ds\sum_{z:\|z-z_4\|_1\leq M}\Phi(z_3-z).
\end{align*}
For $(z_1, z_2, z_3, z_4)\in D^N(\epsilon)$, we have $\|z_3-z\|_1\geq (\|u\|_1-2\epsilon)\sqrt{N}-M$ for $z$ satisfying $\|z-z_4\|_1\leq M$. Hence,
\begin{equation}\label{equ A.6 2}
\lim_{N\rightarrow+\infty}\sup_{(z_1, z_2, z_3, z_4)\in D^N(\epsilon)}\sum_{z:\|z-z_4\|_1\leq M}\Phi(z_3-z)=0
\end{equation}
for sufficiently small $\epsilon$.
According to the local central limit theorem of the simple random walk on $\mathbb{Z}^3$, $p_{t, 3}(\textbf{0}, \textbf{0})=O(1)t^{-\frac{3}{2}}$ and hence
\[
\int_0^{tN}d\theta_1\int_0^{\theta_1}d\theta_2\int_0^{+\infty}p_{\theta_2+s}(\textbf{0}, \textbf{0})ds=O(1)N^{\frac{3}{2}}.
\]
Consequently, by \eqref{equ A.6 2},
\begin{align}\label{equ A.6 4}
&\limsup_{N\rightarrow+\infty}\frac{1}{N^{\frac{3}{2}}}
\sup_{(z_1, z_2, z_3, z_4)\in D^N(\epsilon)}\int_0^{tN}d\theta_1\int_0^{\theta_1}\tilde{\mathbb{P}}\left(\mathcal{I}_1(z_1, z_2, z_3, z_4, \theta_2), \Gamma_{13}\leq \Gamma_{14}<+\infty\right)d\theta_2\notag\\
&\leq \limsup_{N\rightarrow+\infty}\frac{1}{N^{\frac{3}{2}}}\int_0^{tN}d\theta_1\int_0^{\theta_1}\left(\sum_{z:\|z-z_4\|_1\geq M}\int_{\theta_2}^{+\infty}\Phi(z_4-z)\hat{m}(dr, dz)\right)d\theta_2\notag\\
&\leq \limsup_{N\rightarrow+\infty}\frac{1}{N^{\frac{3}{2}}}\int_0^{tN}d\theta_1\int_0^{\theta_1}\left(\sum_{z:\|z-z_4\|_1\geq M}\int_{\theta_2}^{+\infty}1\hat{m}(dr, dz)\right)d\theta_2\left(\sup_{z:\|z\|_1\geq M}\Phi(z)\right)\notag\\
&\leq \limsup_{N\rightarrow+\infty}\frac{1}{N^{\frac{3}{2}}}\int_0^{tN}d\theta_1\int_0^{\theta_1}\left(\sum_{z}\int_{\theta_2}^{+\infty}1\hat{m}(dr, dz)\right)d\theta_2\left(\sup_{z:\|z\|_1\geq M}\Phi(z)\right)\notag\\
&=\limsup_{N\rightarrow+\infty}\frac{1}{N^{\frac{3}{2}}}\int_0^{tN}d\theta_1\int_0^{\theta_1}\tilde{\mathbb{P}}(\Gamma_{13}<+\infty)d\theta_2\left(\sup_{z:\|z\|_1\geq M}\Phi(z)\right)
\end{align}
for any $M\geq 0$. According to the Markov property,
\[
\tilde{\mathbb{P}}(\Gamma_{13}<+\infty)=K_1\int_0^{+\infty}p_{\theta_2+s}(z_1, z_3)ds
\leq K_1\int_0^{+\infty}p_{\theta_2+s}(\textbf{0}, \textbf{0})ds
\]
and hence
\[
\int_0^{tN}d\theta_1\int_0^{\theta_1}\tilde{\mathbb{P}}(\Gamma_{13}<+\infty)d\theta_2=O(1)N^{\frac{3}{2}}
\]
according to the local cental limit theorem of the simple random walk on $\mathbb{Z}^3$. According to the transience of the simple random walk on $\mathbb{Z}^3$, we have $\lim_{M\rightarrow+\infty}\left(\sup_{z:\|z\|_1\geq M}\Phi(z)\right)=0$. As a result,
\[
\lim_{N\rightarrow+\infty}\frac{1}{N^{\frac{3}{2}}}
\sup_{(z_1, z_2, z_3, z_4)\in D^N(\epsilon)}\int_0^{tN}d\theta_1\int_0^{\theta_1}\tilde{\mathbb{P}}\left(\mathcal{I}_1(z_1, z_2, z_3, z_4, \theta_2), \Gamma_{13}\leq \Gamma_{14}<+\infty\right)d\theta_2=0.
\]
According to a similar argument, the above limit still holds when $\Gamma_{13}\leq \Gamma_{14}<+\infty$ is replaced by $\Gamma_{14}\leq \Gamma_{13}<+\infty$. Consequently, the $\mathcal{I}_1$ part of Lemma \ref{lemma appendix A.6 1} holds.

The $\mathcal{I}_2$ and $\mathcal{K}_j$ parts follow from arguments similar to that leading to the $\mathcal{I}_1$ part. Now we deal with the $\mathcal{H}_{13}$ part, which is relatively easy. According to the Markov property,
\begin{align*}
\tilde{\mathbb{P}}\left(\mathcal{H}_{13}\right)
&=K_1^2\int_0^{+\infty}p_{\theta_2+r}(z_1, z_3)dr\int_0^{+\infty}p_{\theta_2+r}(z_2, z_4)dr\\
&\leq K_1^2\left(\int_0^{+\infty}p_{\theta_2+r}(\textbf{0}, \textbf{0})dr\right)^2.
\end{align*}
Therefore, according to the fact that $p_{t, 3}(\textbf{0}, \textbf{0})=O(1)t^{-3/2}$, we have
\[
\sup_{(z_1, z_2, z_3, z_4)\in D^N(\epsilon)}\int_0^{tN}d\theta_1\int_0^{\theta_1}\tilde{\mathbb{P}}\left(\mathcal{H}_{ij}(z_1, z_2, z_3, z_4, \theta_2)\right)d\theta_2=O(1)N\log N
\]
and hence the $\mathcal{H}_{13}$ part of Lemma \ref{lemma appendix A.6 1} holds. Other $\mathcal{H}_{ij}$ parts follow from a similar argument.

Now we deal with the $\mathcal{N}_{13}$ part. This part is further divided into two parts, i.e., $\{\Gamma_{12}<\Gamma_{13}<+\infty\}$ and $\{\Gamma_{13}<\Gamma_{12}<+\infty\}$. The $\{\Gamma_{13}<\Gamma_{12}<+\infty\}$ part follows from an argument similar to that in the $\mathcal{I}_1$ part. So we only deal with the $\{\Gamma_{12}<\Gamma_{13}<+\infty\}$ part. Let $\tilde{m}(dr, dz)$ be the joint distribution of $\Gamma_{12}$ and $\mathcal{Z}^1_{\Gamma_{12}}$, then
\begin{align*}
&\tilde{\mathbb{P}}\left(\Gamma_{12}<\theta_2<\Gamma_{13}<+\infty\right)\\
&=K_1\sum_{z\in \mathbb{Z}^3}\sum_{w\in \mathbb{Z}^3}\int_0^{\theta_2}p_{\theta_2-r}(z, w)\left(\int_0^{+\infty}p_s(w, z_3)ds\right)\tilde{m}(dr, dz)\\
&=K_1\sum_{z}\int_0^{\theta_2}\left(\int_0^{+\infty}p_{\theta_2-r+s}(z_3, z)ds\right)\tilde{m}(dr, dz).
\end{align*}
Therefore, according to the fact that $p_{t, 3}(\textbf{0}, \textbf{0})=O(1)t^{-3/2}$,
\begin{align*}
&\int_0^{\theta_1}\tilde{\mathbb{P}}\left(\Gamma_{12}<\theta_2<\Gamma_{13}<+\infty\right)d\theta_2\\
&\leq K_1\sum_{z}\int_0^{\theta_1}\left(\int_0^{\theta_2}\left(\int_0^{+\infty}p_{\theta_2-r+s}(z_3, z)ds\right)\tilde{m}(dr, dz)\right)d\theta_2\\
&\leq K_1\sum_{z}\int_0^{\theta_1}\left(\int_0^{\theta_2}\left(\int_0^{+\infty}p_{\theta_2-r+s}(\textbf{0}, \textbf{0})ds\right)\tilde{m}(dr, dz)\right)d\theta_2\\
&=O(1)\sum_{z}\int_0^{\theta_1}\left(\int_0^{\theta_2}\frac{1}{\sqrt{\theta_2-r}}\tilde{m}(dr, dz)\right)d\theta_2\\
&=O(1)\sum_{z}\int_0^{\theta_1}\left(\int_r^{\theta_1}\frac{1}{\sqrt{\theta_2-r}}d\theta_2\right)\tilde{m}(dr, dz)\\
&=O(1)\sum_{z}\int_0^{\theta_1}\sqrt{\theta_1-r}\tilde{m}(dr, dz)\\
&\leq O(1)\sqrt{\theta_1}\tilde{\mathbb{P}}(\Gamma_{12}<\theta_2)\leq O(1)\sqrt{\theta_1}\Phi(z_1-z_2).
\end{align*}
For $(z_1, z_2, z_3, z_4)\in D^N(\epsilon)$, $\Phi(z_1-z_2)\leq \sup_{z:\|z\|_1\geq (\|u\|_1-2\epsilon)\sqrt{N}}\Phi(z)$. Hence,
\begin{align*}
&\sup_{(z_1, z_2, z_3, z_4)\in D^N(\epsilon)}\int_0^{tN}d\theta_1\int_0^{\theta_1}\tilde{\mathbb{P}}\left(\Gamma_{12}<\theta_2<\Gamma_{13}<+\infty\right)d\theta_2\\
&=O(1)N^{\frac{3}{2}}\sup_{z:\|z\|_1\geq (\|u\|_1-2\epsilon)\sqrt{N}}\Phi(z).
\end{align*}
According to the transience of the simple random walk on $\mathbb{Z}^3$,
\[
\lim_{N\rightarrow+\infty}\sup_{z:\|z\|_1\geq (\|u\|_1-2\epsilon)\sqrt{N}}\Phi(z)=0
\]
and hence
\begin{align}\label{equ A.6 5}
\lim_{N\rightarrow+\infty}\frac{1}{N^{\frac{3}{2}}}
\sup_{(z_1, z_2, z_3, z_4)\in D^N(\epsilon)}\int_0^{tN}d\theta_1\int_0^{\theta_1}\tilde{\mathbb{P}}\left(\Gamma_{12}<\theta_2<\Gamma_{13}<+\infty\right)d\theta_2=0.
\end{align}
According to a similar argument,
\begin{align*}
&\int_0^{\theta_1}\tilde{\mathbb{P}}\left(\theta_2\leq \Gamma_{12}<\Gamma_{13}<+\infty\right)d\theta_2\\
&=O(1)\sum_{z}\int_0^{+\infty}\left(\int_0^{\min\{r, \theta_1\}}\frac{1}{\sqrt{r-\theta_2}}d\theta_2\right)\tilde{m}(dr, dz)\\
&\leq O(1)\sqrt{\theta}\Phi(z_1-z_2)
\end{align*}
and hence \eqref{equ A.6 5} still holds when $\Gamma_{12}<\theta_2<\Gamma_{13}<+\infty$ is replaced by $\theta_2\leq \Gamma_{12}<\Gamma_{13}<+\infty$.
In conclusion, the $\mathcal{N}_{13}$ part of Lemma \ref{lemma appendix A.6 1} holds. Other $\mathcal{N}_{ij}$ parts and $\mathcal{T}_{ji}$ parts follow from arguments similar to that leading to the $\mathcal{N}_{13}$ part and the proof is complete.
\qed

Now we prove \eqref{equ 4.8}.
\proof[Proof of \eqref{equ 4.8}]
For given $r,\theta_2\geq 0$ and any $i\in \{1, 2\}$ and $j\in \{3, 4\}$, we denote by $\Xi_{i, j}$ the event that $\Gamma_{ij}<r+\theta_2$ and denote by $\tilde{\Xi}_{i, j}$ the event that $\Gamma_{ij}<r+\theta_2$ and $\Gamma_{lk}=+\infty$ for any different $l, k\in \{1, 2, 3, 4\}$ such that $(l, k)\neq (i, j)$. We write $\Xi_{i, j}$ as $\Xi_{i, j}(z_1, z_2, z_3, z_4, \theta_2, r)$ when we need to emphasize the dependence of $\Xi_{i, j}$ on $z_1, z_2, z_3, z_4, \theta_2, r$. According to the above definitions, for $i\in \{1, 2\}$ and $j\in \{3, 4\}$
\[
\left(\Xi_{i, j}\setminus \tilde{\Xi}_{i, j}\right)
\subseteq \mathcal{N}_{ij}\bigcup \mathcal{I}_i\bigcup \mathcal{K}_j\bigcup \mathcal{H}_{ij}\bigcup\mathcal{T}_{ji}.
\]
Then, by Lemma \ref{lemma appendix A.6 1},
\begin{equation}\label{equ A.6 6}
\lim_{N\rightarrow+\infty}\frac{1}{N^{\frac{3}{2}}}
\sup_{(z_1, z_2, z_3, z_4)\in D^N(\epsilon), r}\int_0^{tN}d\theta_1\int_0^{\theta_1}\tilde{\mathbb{P}}\left(\left(\Xi_{i, j}\setminus \tilde{\Xi}_{i, j}\right)(z_1, z_2, z_3, z_4, \theta_2, r)\right)d\theta_2=0
\end{equation}
for any $i\in \{1, 2\}$ and $j\in \{3, 4\}$.

Let
\[
\tilde{\Gamma}=\inf\left\{t:~t\geq \theta_2, \left\{\mathcal{X}_{t, 1}^{z_1, 0}, \mathcal{X}_{t, 2}^{z_2, 0}\right\}\bigcap \left\{\tilde{\mathcal{X}}_{t, 1}^{z_3, \theta_2}, \tilde{\mathcal{X}}_{t, 2}^{z_4, \theta_2}\right\}\neq \emptyset\right\}
\]
be defined as above, then
\[
\{\tilde{\Gamma}\leq r+\theta_2\}=\bigcup_{i=1}^2\bigcup_{j=3}^4 \Xi_{i, j}
\]
according to our equivalent definitions of the coalescing random walk. For simplicity, we denote by $\tilde{F}$ the event $\{\tilde{\Gamma}\leq r+\theta_2\}$.
We further define
\[
\hat{F}=\bigcup_{i=1}^2\bigcup_{j=3}^4 \tilde{\Xi}_{i, j}.
\]
Then, by \eqref{equ A.6 6},
\begin{equation}\label{equ A.6 7}
\lim_{N\rightarrow+\infty}\frac{1}{N^{\frac{3}{2}}}
\sup_{(z_1, z_2, z_3, z_4)\in D^N(\epsilon), r}\int_0^{tN}d\theta_1\int_0^{\theta_1}\tilde{\mathbb{P}}\left(\left(\tilde{F}\setminus \hat{F}\right)(z_1, z_2, z_3, z_4, \theta_2, r)\right)d\theta_2=0.
\end{equation}
By \eqref{equ A.6 1 two} and our equivalent definitions,
\begin{equation}\label{equ A.6 9}
{\rm Cov}_{\mu_p}\left(\eta_{\theta_2+r}(z_1)\eta_{\theta_2+r}(z_2), \eta_r(z_3)\eta_r(z_4)\right)
=\tilde{\mathbb{E}}\left(\hat{Y}1_{\tilde{F}}\right),
\end{equation}
where
\begin{align*}
\hat{Y}=&\mathbb{E}_{\mu_p}\left(\prod_{j=1}^2\eta_0(\mathcal{X}_{r+\theta_2, j}^{z_j, 0})\prod_{j=3}^4\eta_0(\mathcal{X}_{r+\theta_2, j}^{z_j, \theta_2})\right)\\
&-\mathbb{E}_{\mu_p}\left(\prod_{j=1}^2\eta_0(\mathcal{X}_{r+\theta_2, j}^{z_j, 0})\right)\mathbb{E}_{\mu_p}\left(\prod_{j=3}^4\eta_0(\tilde{\mathcal{X}}_{r+\theta_2, j-2}^{z_j, \theta_2})\right).
\end{align*}
Note that, to obtain \eqref{equ A.6 9}, we utilize the facts that $\mu_p$ is a product measure and
\[
\left(\tilde{\mathcal{X}}_{r+\theta_2, 1}^{z_3, \theta_2}, \tilde{\mathcal{X}}_{r+\theta_2, 2}^{z_4, \theta_2}\right)
=\left(\mathcal{X}_{r+\theta_2, 3}^{z_3, \theta_2}, \mathcal{X}_{r+\theta_2, 4}^{z_4, \theta_2}\right)
\]
when $\tilde{\Gamma}\geq r+\theta_2$. On the event $\hat{F}$, according to the definition of $\tilde{\Xi}_{i, j}$,
\[
\hat{Y}=p^3-p^4.
\]
Then, according to \eqref{equ A.6 6}-\eqref{equ A.6 9} and the fact that $\{\tilde{\Xi}_{i, j}\}_{i\in \{1, 2\}, j\in \{3, 4\}}$ are disjoint with each other, we have
\begin{align*}
{\rm Cov}_{\mu_p}\left(\eta_{\theta_2+r}(z_1)\eta_{\theta_2+r}(z_2), \eta_r(z_3)\eta_r(z_4)\right)
=(p^3-p^4)\sum_{i=1}^2\sum_{j=3}^4\tilde{\mathbb{P}}(\Gamma_{ij}\leq r+\theta_2)+\varepsilon_2
\end{align*}
for any $(z_1, z_2, z_3, z_4)\in D^N(\epsilon)$, where $\varepsilon_2=\varepsilon_2(z_1, z_2, z_3, z_4, \theta_2, r)$ such that
\[
\lim_{N\rightarrow+\infty}\frac{1}{N^{\frac{3}{2}}}
\sup_{(z_1, z_2, z_3, z_4)\in D^N(\epsilon), r}\int_0^{tN}d\theta_1\int_0^{\theta_1}|\varepsilon_2(z_1, z_2, z_3, z_4, \theta_2, r)|d\theta_2=0.
\]
Consequently,
\begin{align}\label{equ A.6 10}
&{\rm Cov}_{\nu_p}\left(\eta_{\theta_2}(z_1)\eta_{\theta_2}(z_2), \eta_0(z_3)\eta_0(z_4)\right)
=(p^3-p^4)\sum_{i=1}^2\sum_{j=3}^4\tilde{\mathbb{P}}(\Gamma_{ij}<+\infty)+\varepsilon_3\notag\\
&=K_1(p^3-p^4)\sum_{i=1}^2\sum_{j=3}^4\int_0^{+\infty}p_{s+\theta_2}(z_i, z_j)ds+\varepsilon_3
\end{align}
for any $(z_1, z_2, z_3, z_4)\in D^N(\epsilon)$, where $\varepsilon_3=\varepsilon_3(z_1, z_2, z_3, z_4, \theta_2)$ such that
\begin{align*}
\lim_{N\rightarrow+\infty}\frac{1}{N^{\frac{3}{2}}}
\sup_{(z_1, z_2, z_3, z_4)\in D^N(\epsilon)}\int_0^{tN}d\theta_1\int_0^{\theta_1}|\varepsilon_3(z_1, z_2, z_3, z_4, \theta_2)|d\theta_2=0.
\end{align*}
According to the local central limit theorem of the simple random walk on $\mathbb{Z}^3$, for any $M>0$ and $t_1>t_2>0$,
\begin{equation}\label{equ A.6 11}
\lim_{N\rightarrow+\infty}\sup_{z_1, z_2, r}\left|N^{\frac{3}{2}}p_{rN}(z_1, z_2)-f_{2r}^3\left(\frac{z_2-z_1}{\sqrt{N}}\right)\right|=0,
\end{equation}
where the supremum in \eqref{equ A.6 11} is over all $(z_1, z_2, r)$ such that $\|z_1\|_1\leq M\sqrt{N}, \|z_2\|_1\leq M\sqrt{N}$ and $t_2\leq r\leq t_1$. Equation \eqref{equ 4.8} follows from \eqref{equ A.6 1}, \eqref{equ A.6 10} and \eqref{equ A.6 11}.
\qed

\quad

\textbf{Acknowledgments.}
The author is grateful to financial
supports from the National Natural Science Foundation of China with grant number 12371142.

{}
\end{document}